\documentclass[10pt]{article}%
\usepackage{amsmath}
\usepackage{amsfonts}
\usepackage{mathrsfs}
\usepackage{amssymb,color}
\usepackage[linkcolor=black,anchorcolor=black,citecolor=black]{hyperref}
\usepackage{graphicx}
\numberwithin{equation}{section}
\usepackage[body={15.5cm,21cm}, top=3cm]{geometry}%
\setcounter{MaxMatrixCols}{30}
\usepackage[numbers,sort&compress]{natbib}
\providecommand{\U}[1]{\protect\rule{.1in}{.1in}}
\providecommand{\U}[1]{\protect \rule{.1in}{.1in}}
\newtheorem{theorem}{Theorem}[section]

\newtheorem{definition}[theorem]{Definition}

\newtheorem{lemma}[theorem]{Lemma}

\newtheorem{proposition}[theorem]{Proposition}
\newtheorem{remark}[theorem]{Remark}
\newtheorem{assumption}[theorem]{Assumption}

\newenvironment{proof}[1][Proof]{\noindent \textbf{#1.} }{\  \rule{0.5em}{0.5em}}

\def \P{\mathsf{P}}
\def \E{\mathsf{E}}
\def \hE {\hat{\mathbb E}}
\begin{document}
	\title{Mean Reflected Backward Stochastic Differential Equations Driven by $G$-Brownian Motion with Double Constraints}
	\author{Wei He \thanks{Research Center for Mathematics and Interdisciplinary Sciences, Shandong University, Qingdao 266237, Shandong, China. hew@sdu.edu.cn.}\and Hanwu Li\thanks{Research Center for Mathematics and Interdisciplinary Sciences, Shandong University, Qingdao 266237, Shandong, China. lihanwu@sdu.edu.cn.}
	\thanks{Frontiers Science Center for Nonlinear Expectations (Ministry of Education), Shandong University, Qingdao 266237, Shandong, China.} }
	\date{}
	\maketitle
	\begin{abstract}
	In this paper, we study the backward stochastic differential equations driven by $G$-Brownian motion with double mean reflections, which means that the constraints are made on the law of the solution. Making full use of the  backward Skorokhod problem with two nonlinear reflecting boundaries and the fixed-point theory, the existence and uniqueness  of solutions are established. We also consider the case  where the coefficients satisfy a non-Lipschitz condition using the Picard iteration argument only for the $Y$ component. Moreover, some basic properties including a new version of comparison theorem and connection with a deterministic optimization problem are also obtained.
	\end{abstract}
	
	\textbf{Key words}: $G$-expectation, backward stochastic differential equations,  double mean reflections, non-Lipschitz coefficients, comparison theorem, optimization problem
	
	\textbf{MSC-classification}: 60G65, 60H10

\section{Introduction}
We firstly introduce the background in Subsection 1.1 and then indicates our contributions in Subsection 1.2, followed with the organization of the paper in Subsection 1.3.

\subsection{Background}
In 1997, El Karoui et al. \cite{E} systematically investigated the reflected backward stochastic differential equations (reflected BSDEs) of the following form
\begin{align}\label{1}
Y_{t}=\xi+\int_{t}^{T} f\left(s, Y_{s}, Z_{s}\right) d s-\int_{t}^{T} Z_{s} d W_{s}+A_T-A_t,
\end{align}
where the  nondecreasing process $A$ interprets the extra cost for keeping $Y$ above  the given constraint $\ell$, i.e., for any $t\in[0,T]$, $Y_t\geq \ell_t$. Moreover, this process $A$ should satisfy the Skorokhod condition to make the constraint hold in a minimum fashion.
 Then, Cvitani\'{c} and Karatzas \cite{CK} considered the BSDEs with double reflections, where the solution stays between two prescribed obstacles. It was shown that the solution to a reflected BSDE and to a doubly reflected BSDE corresponds to the value function of an optimal stopping problem and the value function of a Dynkin game, respectively.  

However, the constraints described above are made on the paths of the solution which are not valid when we consider the quantile hedging and related target problems with controlled loss, see \cite{B1}. Inspired by this fact, Briand, Elie and Hu \cite{BEH} introduced the so-called mean reflected BSDEs satisfying \eqref{1} subject to a weaker constraint on the distribution of the component $Y$ taking the following form
$$
{\mathbb{E}}\left[\ell\left(t, Y_t\right)\right] \geq 0, \quad \forall t \in[0, T],
$$
for a given loss function $\ell$, where $\mathbb{E}[\cdot]$ is the classical expectation. In order to get the corresponding existence and uniqueness results, they propose the notion of deterministic flat solution meaning that $A$ is required to be a deterministic nondecreasing process satisfying the condition of Skorokhod's type. 
Due to its wide applications in mathematical finance, such as superhedging of contingent claims under a running risk management constraint, the mean reflected BSDE has received considerable attention. Briand and Hibon \cite{BH} investigated the propagation of chaos for  mean reflected BSDEs. Moreover, Falkowski and Slomi\'{n}ski  \cite{FS2} considered the case of mean reflection with two constraints:
$$
{\mathbb{E}}\left[\ell\left(t, Y_t\right)\right]\in [l_t,r_t], \quad \forall t \in[0, T].
$$
Similar to \cite{BH}, Li and Ning \cite{LN} successfully approximated the doubly mean reflected BSDE by interacting particle systems. Recently, Li \cite{Li} further generalized the results in \cite{FS2} to a more general situation with two nonlinear reflecting boundaries, that is, we are given two different nonlinear loss functions. 
 The readers may refer to  \cite{BCGL,DE,HHL,HH,HMW,HT1,QW} and the references therein for a comprehensive overview of this theory.

It should be pointed out that  both the reflected and mean reflected BSDEs in the classical case cannot be applied to the pricing problem when there exists volatility uncertainty in the underlying financial markets. The $G$-expectation theory introduced by Peng \cite{P07a,P08a,P19} has become a very useful tool to deal with volatility ambiguity in finance as presented in \cite{EJ}. Therefore, it has been developed notably to include most of the classical results of probability theory and stochastic calculus, see for example \cite{DHP11,G,HWZ,STZ1}. In particular, Hu, Ji, Peng and Song \cite{HJPS1,HJb} investigated the BSDE theory under this $G$-framework {(Soner, Touzi and Zhang \cite{STZ} developed another formulation of fully nonlinear BSDE called 2BSDE, which shares many similarities with $G$-BSDEs). 
Moreover, the exploration of reflected $G$-BSDEs can be found in \cite{LP',LP,LS'}, which consider the case of an upper obstacle, the case of a lower obstacle and the case of two obstacles, respectively. In contrast to the pathwise constraint case, it is natural to investigate $G$-BSDEs with mean reflection. 
Liu and Wang \cite{LiuW} have taken the lead in making progress in this area. However, due to the special structure of $G$-BSDEs, we cannot construct a contraction mapping involving $Z$ since there is no appropriate estimate for this term. Therefore, in \cite{LiuW}, they only get the existence and uniqueness results for two special kinds of generators. Later, He \cite{HE} extends the results in \cite{LiuW} to a multi-dimensional and time-varying non-Lipschitz setting and it is worth noting that the general Lipschitz case can be included as its special case. What's more, Gu et al. \cite{GLX} established the existence result when the generators have  quadratic growth in $Z$. A natural question arises regarding whether doubly mean reflected BSDEs driven by $G$-Brownian motion can be well-defined. Besides, for both the doubly reflected and doubly mean reflected BSDEs in the classical expectation framework, the solution coincides with the value function of a game problem (see \cite{CK,DQS,GIOQ,FS2,Li}). It appears worthwhile to investigate if the solution to a doubly mean reflected $G$-BSDE can be regarded as the value function of certain optimization problem. Hence, it is the objective of this paper after establishing the well-posedness of doubly mean reflected $G$-BSDEs.

\subsection{Our contributions}
In this paper, we introduce the mean reflected $G$-BSDEs with two nonlinear reflecting boundaries in the following form
\begin{equation}\label{intro1}
    \begin{cases}
Y_t=\xi+\int_t^T f(s,Y_s,Z_s)ds+\int_t^T g(s,Y_s,Z_s)d\langle B\rangle_s-\int_t^T Z_s dB_s-(K_T-K_t)+(A_T-A_t), \\
\hE[L(t,Y_t)]\leq 0\leq \hE[R(t,Y_t)], \\
A_t=A^R_t-A^L_t \textrm{ and } \int_0^T \hE[R(t,Y_t)]dA_t^R=\int_0^T \hE[L(t,Y_t)]dA^L_t=0,
\end{cases}
\end{equation}
where $B$ denotes the $G$-Brownian motion, $\hE[\cdot]$ represents the $G$-expectation, $K$ is a non-increasing $G$-martingale, the generators $f,g$ and the loss functions $L,R$ satisfy certain regularity conditions (see Assumptions \ref{ass2}, \ref{assfg} and \ref{assfg2} below). The solution to this kind of constrained $G$-BSDE is a quadruple of processes $(Y,Z,K,A)$ with $A$ being the difference between two  nondecreasing deterministic functions $A^R$, $A^L$, each of which satisfy the Skorokhod condition. The existence and uniqueness of solution to doubly mean reflected $G$-BSDE \eqref{intro1} is established in Theorem \ref{thm3.10} under the same Lipschitz continuous assumption as made for the generators of the non-reflected case in \cite{HJPS1}, followed by the well-posedness of a unique solution to \eqref{intro1} with weaker regularity on the $Y$ term established in Theorem \ref{myw310}.

In order to prove Theorem \ref{thm3.10}, we first construct the solution  when the generators do not depend on $Y$ and $Z$ (see Proposition \ref{prop7}), drawing support from the backward Skorokhod problem with two nonlinear reflecting boundaries obtained in \cite{Li}. Due to the lack of appropriate estimate for the $Z$ term, the method of simultaneously applying contraction mappings to $Y$ and $Z$ has been proven ineffective. Based on the construction for the case of constant generators and the representation for doubly mean reflected $G$-BSDEs utilizing $G$-BSDEs, Theorem \ref{thm3.8} exhibits the well-posedness of doubly mean reflected $G$-BSDEs for some special case where the generator $f$ is  deterministically linearly dependent on $y$ and $g$ does not depend on $y$. As for the general Lipschitz case, employing the fix-point argument only for the $Y$ term, we could derive the local well-posedness result.  
The global situation is then derived through a backward iteration of the local ones. Besides,  motivated by Mao \cite{Mao} and He \cite{H}, we would like to further make efforts to relax the Lipschitz assumption of the generators. With the help of the representation mentioned above and using Picard iteration argument only for the $Y$ component, we could also establish the well-posedness result under a so-called $\beta$-order Mao's condition which is presented  in Theorem \ref{myw310}. It should be point out that the well-known Bihari's inequality and the a priori estimates of $G$-BSDEs obtained \cite{HJPS1} play an important role in the proof. 

Note that the Skorokhod conditions ensure the minimality of the force $A^R$ aiming to push the solution upwards and the force $A^L$ to pull the solution downwards, respectively. We explore this effect by a comparison theorem, denoted as Proposition \ref{prop11}. Besides, another kind of comparison theorem concerning the loss functions is established (see Proposition \ref{prop11'}). It is worth noting that, since the constraints are given in expectation but not pointwisely, the comparison property only holds for some special structure of the generators. Besides, we creatively develop a new connection between the expectation of the first component of the solution to the doubly mean reflected $G$-BSDE \eqref{intro1} and an appropriate optimization problem. Recall that the solution to a doubly mean reflected BSDE in the classical case coincides with the value of a ``$\sup_s \inf_t$" problem (see \cite{FS2,Li}). However, since the $G$-expectation can be represented as an upper expectation over a set of non-dominated probability measures, the optimization problems considered in this paper are of the ``$\sup_s \inf_t \sup_P$" type and the ``$\inf_t\sup_s \inf_P$" type, which are more complicated. Actually, in Theorem \ref{theorem3.6}, we show that the lower expectation and the upper expectation of $Y$, which is the solution to \eqref{intro1}, lie between the lower value and the upper value of an appropriate optimization problem. Especially, when the solution to the doubly mean reflected $G$-BSDE has no mean uncertainty, the values coincides with each other.  These findings may bring some inspiration to the stochastic control problems and financial issues in our future research.

\subsection{Organization of the paper}
The paper is organized as follows. In Section 2, we review some basic notations of $G$-expectation theory and some existing results of backward Skorokhod problem. Then, we study the well-posedness problem of mean reflected $G$-BSDEs with two constraints in Section 3. Section 4 is devoted to the properties of the solution including the comparison theorem and the connection with an optimization problem.

\section{Preliminaries}
In this paper, for a given set of parameters or functions $\alpha$, $\widetilde{C}(\alpha)$ will denote a positive constant only depending on these parameters or functions and may change from line to line.
\subsection{$G$-expectation theory}
Firstly, we review some basic notions and results of $G$-expectation together with $G$-stochastic calculus.  The readers may refer to  \cite{P07a,P08a,P19} for more details. For simplicity, we only consider the 1-dimensional $G$-Brownian motion. The results still hold for the multidimensional case.


	Let $\Omega_T=C_{0}([0,T];\mathbb{R})$, the space of
real-valued continuous functions with $\omega_0=0$, be endowed
with the supremum norm and 
let  $B$ be the canonical
process. Set
\[
L_{ip} (\Omega_T):=\{ \varphi(B_{t_{1}},...,B_{t_{n}}):  \ n\in\mathbb {N}, \ t_{1}
,\cdots, t_{n}\in\lbrack0,T], \ \varphi\in C_{b,Lip}(\mathbb{R}^{ n})\},
\]
where $C_{b,Lip}(\mathbb{R}^{ n})$ denotes the set of bounded Lipschitz functions on $\mathbb{R}^{n}$.

We fix a sublinear, continuous and monotone function  $G:\mathbb{R}\rightarrow\mathbb{R}$ defined by
\begin{displaymath}
G(a):=\frac{1}{2}(\bar{\sigma}^2a^+-\underline{\sigma}^2a^-),
\end{displaymath}
where $0\leq \underline{\sigma}^2<\bar{\sigma}^2$. The related $G$-expectation on $(\Omega,L_{ip}(\Omega_T))$ can be constructed in the following way. Assume that $\xi\in L_{ip}(\Omega_T)$ can be represented as
    \begin{displaymath}
    	\xi=\varphi(B_{{t_1}}, B_{t_2},\cdots,B_{t_n}).
\end{displaymath}
    Then, for $t\in[t_{k-1},t_k)$, $k=1,\cdots,n$,
\begin{displaymath}
	\hat{\mathbb{E}}_{t}[\varphi(B_{{t_1}}, B_{t_2},\cdots,B_{t_n})]=u_k(t, B_t;B_{t_1},\cdots,B_{t_{k-1}}),
\end{displaymath}
where, for any $k=1,\cdots,n$, $u_k(t,x;x_1,\cdots,x_{k-1})$ is a function of $(t,x)$ parameterized by $(x_1,\cdots,x_{k-1})$ such that it solves the following fully nonlinear PDE defined on $[t_{k-1},t_k)\times\mathbb{R}$:
\begin{displaymath}
	\partial_t u_k+G(\partial_x^2 u_k)=0
\end{displaymath}
with terminal conditions
\begin{displaymath}
	u_k(t_k,x;x_1,\cdots,x_{k-1})=u_{k+1}(t_k,x;x_1,\cdots,x_{k-1},x), \ k<n
\end{displaymath}
and $u_n(t_n,x;x_1,\cdots,x_{n-1})=\varphi(x_1,\cdots,x_{n-1},x)$. Hence, the $G$-expectation of $\xi$ is $\hat{\mathbb{E}}_0[\xi]$ and for simplicity, we always omit the subscript $0$. The triple $(\Omega,L_{ip}(\Omega_T),\hat{\mathbb{E}})$ is called the $G$-expectation space.


Define $\Vert\xi\Vert_{L_{G}^{p}}:=(\hat{\mathbb{E}}[|\xi|^{p}])^{1/p}$ for $\xi\in L_{ip}(\Omega_T)$ and $p\geq1$.   The completion of $L_{ip} (\Omega_T)$ under this norm  is denote by $L_{G}^{p}(\Omega)$. For all $t\in[0,T]$, $\hat{\mathbb{E}}_t[\cdot]$ is a continuous mapping on $L_{ip}(\Omega_T)$ w.r.t the norm $\|\cdot\|_{L_G^1}$. Hence, the conditional $G$-expectation $\mathbb{\hat{E}}_{t}[\cdot]$ can be
extended continuously to the completion $L_{G}^{1}(\Omega_T)$. Denis, Hu and Peng \cite{DHP11} prove that the $G$-expectation has the following representation.
\begin{theorem}[\cite{DHP11}]
	\label{the1.1}  There exists a weakly compact set
	$\mathcal{P}$ of probability
	measures on $(\Omega_T,\mathcal{B}(\Omega_T))$, such that
	\[
	\hat{\mathbb{E}}[\xi]=\sup_{\P\in\mathcal{P}}\E^{\P}[\xi] \text{ for all } \xi\in  {L}_{G}^{1}{(\Omega_T)}.
	\]
	$\mathcal{P}$ is called a set that represents $\hat{\mathbb{E}}$.
\end{theorem}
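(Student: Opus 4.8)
The plan is to exhibit an explicit family of probability measures, verify the representation first on the cylinder functions $L_{ip}(\Omega_T)$ through the stochastic control interpretation of the defining PDE, and then pass to the completion $L_G^1(\Omega_T)$ together with a weak compactness argument. First I would construct the candidate set. Fix an auxiliary Wiener space $(\bar\Omega,\bar{\mathcal F},P_0)$ carrying a standard Brownian motion $W$ with its augmented filtration $(\bar{\mathcal F}_t)_{t\in[0,T]}$, and let $\mathcal A_{0,T}$ denote the collection of $(\bar{\mathcal F}_t)$-progressively measurable processes $\theta$ with values in $[\underline{\sigma},\bar{\sigma}]$. For each such $\theta$ put $B^\theta_t:=\int_0^t\theta_s\,dW_s$; since $B^\theta$ is a continuous martingale with bounded quadratic variation, its law $P_\theta:=P_0\circ(B^\theta)^{-1}$ is a probability measure on $(\Omega_T,\mathcal B(\Omega_T))$. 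I would then take $\mathcal P$ to be the weak closure of $\mathcal P_0:=\{P_\theta:\theta\in\mathcal A_{0,T}\}$.

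The heart of the argument is the identity
\[
\hat{\mathbb E}[\xi]=\sup_{\theta\in\mathcal A_{0,T}}\E^{P_\theta}[\xi]
\]
for cylinder functions $\xi=\varphi(B_{t_1},\dots,B_{t_n})\in L_{ip}(\Omega_T)$. The observation that drives this is that $G(a)=\tfrac12\sup_{\sigma\in[\underline{\sigma},\bar{\sigma}]}\sigma^2 a$, so the PDE $\partial_t u_k+G(\partial_x^2 u_k)=0$ used to define $\hat{\mathbb E}$ is exactly the Hamilton--Jacobi--Bellman equation of the problem of controlling the volatility $\theta$ inside $[\underline{\sigma},\bar{\sigma}]$. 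I would invoke the dynamic programming principle (or a verification argument) for this control problem to identify the value function $\sup_\theta\E[\varphi(B^\theta_{t_1},\dots,B^\theta_{t_n})]$ with the viscosity solution $u_1(0,0)$, processing the deterministic grid $0=t_0<t_1<\dots<t_n$ backward in precisely the same recursive manner in which $\hat{\mathbb E}$ is built up. This yields the equality on $L_{ip}(\Omega_T)$ and simultaneously shows that $\xi\mapsto\sup_{P\in\mathcal P}\E^{P}[\xi]$ is a sublinear expectation coinciding with $\hat{\mathbb E}$ there.

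It then remains to extend the equality to $L_G^1(\Omega_T)$ and to prove weak compactness. For the extension I would use that, on $L_{ip}(\Omega_T)$, the sublinearity of $\sup_{P\in\mathcal P}\E^{P}[\cdot]$ gives the Lipschitz bound $|\sup_P\E^P[\xi]-\sup_P\E^P[\eta]|\le\sup_P\E^P[|\xi-\eta|]=\hat{\mathbb E}[|\xi-\eta|]=\|\xi-\eta\|_{L_G^1}$, so the functional is $\|\cdot\|_{L_G^1}$-continuous; since $L_{ip}(\Omega_T)$ is by construction dense in $L_G^1(\Omega_T)$, the representation passes to the completion by approximation, the limit being independent of the chosen sequence. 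A small point to record is that enlarging $\mathcal P_0$ to its weak closure $\mathcal P$ does not change the supremum: for $\xi\in L_{ip}(\Omega_T)$ the map $P\mapsto\E^P[\xi]$ is bounded and weakly continuous (it is evaluation of a bounded continuous path functional), whence $\sup_{\mathcal P}=\sup_{\overline{\mathcal P_0}}=\sup_{\mathcal P_0}$.

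I expect the genuine obstacle to be the weak compactness of $\mathcal P$. Relative compactness should follow from tightness via uniform moment estimates: the constraint $\underline{\sigma}\le\theta\le\bar{\sigma}$ together with the Burkholder--Davis--Gundy inequality gives $\E^{P_\theta}[|B_t-B_s|^4]\le \widetilde C(\bar{\sigma})\,|t-s|^2$ uniformly in $\theta$, so the Kolmogorov--Chentsov criterion yields tightness of $\mathcal P_0$, hence of its closure $\mathcal P$ by Prohorov's theorem. Taking the weak closure then produces a set that is both weakly closed and relatively compact, i.e.\ weakly compact, and the preceding remark ensures the representation survives the closure. Combining the three steps — construction of $\mathcal P_0$, the HJB/control identity on $L_{ip}(\Omega_T)$, the density extension to $L_G^1(\Omega_T)$, and the tightness-plus-closure compactness argument — completes the proof. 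The most delicate verification is making the control-theoretic identification on the grid fully rigorous in the viscosity sense, since it is what transfers the nonlinearity encoded in $G$ into a supremum over genuine probability measures.
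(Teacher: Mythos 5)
Your proposal is correct and is essentially the standard argument of the cited reference \cite{DHP11}: the paper itself states Theorem \ref{the1.1} as a quoted result without proof, and your construction of $\mathcal{P}$ as the (weak closure of the) laws of $\int_0^\cdot\theta_s\,dW_s$ with $\theta\in[\underline{\sigma},\bar{\sigma}]$, the HJB identification of $\hat{\mathbb{E}}$ with the volatility-control value on $L_{ip}(\Omega_T)$, the density extension to $L_G^1(\Omega_T)$ via the uniform Lipschitz bound, and the tightness-plus-Prohorov compactness step reproduce that proof. The only point to flag is the one you already identify as delicate — making the dynamic programming identification on the grid rigorous — which is exactly where the cited work invests its effort.
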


Let $\mathcal{P}$ be a weakly compact set that represents $\hat{\mathbb{E}}$.
A set $A\in\mathcal{B}(\Omega_T)$ is called polar if $V(A)=0$.  A
property holds $``quasi$-$surely"$ (q.s.) if it holds outside a
polar set. In this paper, we do not distinguish two random variables $X$ and $Y$ if $X=Y$, q.s.. The technical lemma below  will be used throughout Subsection 3.4 in this paper.

\begin{lemma}[Jensen's inequality \cite{BL}] \label{myw210}Let $\rho: \mathbb{R} \rightarrow \mathbb{R}$ be a continuous, non-decreasing and concave function. Then, for each $X \in$ $L_{G}^{1}\left(\Omega_{T}\right)$ such that $\rho(X)\in L_{G}^{1}\left(\Omega_{T}\right)$, the following inequality holds:
\[
\rho\Big(\hat{\mathbb{E}}[X]\Big) \geq \hat{\mathbb{E}}\Big[\rho(X)\Big].
\]
\end{lemma}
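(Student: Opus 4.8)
The plan is to exploit the representation of the $G$-expectation as an upper expectation over the weakly compact family $\mathcal{P}$ furnished by Theorem \ref{the1.1}, which reduces the nonlinear inequality to the classical Jensen inequality under each individual measure. First I would fix an arbitrary $\P\in\mathcal{P}$. Since $X,\rho(X)\in L^1_G(\Omega_T)$ and $\hat{\mathbb{E}}[|X|]=\sup_{\P\in\mathcal{P}}\E^{\P}[|X|]<\infty$ (and likewise for $\rho(X)$), both $X$ and $\rho(X)$ are genuinely $\P$-integrable. Hence the ordinary Jensen inequality for the concave function $\rho$ applies under $\P$ and yields $\E^{\P}[\rho(X)]\leq\rho\big(\E^{\P}[X]\big)$.

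Next I would take the supremum over $\P\in\mathcal{P}$ on both sides. By Theorem \ref{the1.1} the left-hand supremum equals $\hat{\mathbb{E}}[\rho(X)]$. For the right-hand side I would invoke the monotonicity of $\rho$: because $\E^{\P}[X]\leq\hat{\mathbb{E}}[X]$ for every $\P$, we get $\rho\big(\E^{\P}[X]\big)\leq\rho\big(\hat{\mathbb{E}}[X]\big)$, and therefore $\sup_{\P\in\mathcal{P}}\rho\big(\E^{\P}[X]\big)\leq\rho\big(\hat{\mathbb{E}}[X]\big)$. Chaining these gives the asserted inequality $\hat{\mathbb{E}}[\rho(X)]\leq\rho\big(\hat{\mathbb{E}}[X]\big)$. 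I note in passing that the continuity of $\rho$ actually upgrades the last step to the identity $\sup_{\P\in\mathcal{P}}\rho\big(\E^{\P}[X]\big)=\rho\big(\sup_{\P\in\mathcal{P}}\E^{\P}[X]\big)=\rho\big(\hat{\mathbb{E}}[X]\big)$, but only the inequality is needed here.

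The main obstacle is the mild but essential bookkeeping of legitimizing the classical Jensen step under each $\P$, namely confirming $\P$-integrability of both $X$ and $\rho(X)$ from their membership in $L^1_G(\Omega_T)$ via the representation, as noted above. A second, fully self-contained route that sidesteps the measure-by-measure argument would use concavity directly: a concave function equals the infimum of its affine majorants, and because $\rho$ is non-decreasing every affine majorant $x\mapsto ax+b\geq\rho(x)$ necessarily has slope $a\geq 0$ (a negative slope would send $ay+b\to-\infty$ as $y\to+\infty$, contradicting that the non-decreasing $\rho$ is bounded below on $[0,\infty)$). For each such majorant one has $\rho(X)\leq aX+b$, so by monotonicity, positive homogeneity and constant-translation invariance of the sublinear expectation, $\hat{\mathbb{E}}[\rho(X)]\leq\hat{\mathbb{E}}[aX+b]=a\,\hat{\mathbb{E}}[X]+b$; taking the infimum over all affine majorants recovers $\rho\big(\hat{\mathbb{E}}[X]\big)$ on the right. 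I expect the first approach to be the cleaner one to write out, since it leans on Theorem \ref{the1.1}, which is already available.
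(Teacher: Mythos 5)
The paper does not prove this lemma --- it is imported verbatim from \cite{BL} --- so there is no internal proof to compare against; your representation-based argument is correct and is essentially the standard proof of this fact (reduce to the classical Jensen inequality under each $\P\in\mathcal{P}$ via Theorem \ref{the1.1}, then use the monotonicity of $\rho$ to dominate $\sup_{\P}\rho(\E^{\P}[X])$ by $\rho(\hat{\mathbb{E}}[X])$). Your integrability bookkeeping under each $\P$ and the alternative affine-majorant route are both sound, so either write-up would serve as a self-contained proof.
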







\begin{definition}
	\label{def2.6} Let $M_{G}^{0}(0,T)$ be the collection of processes in the
	following form: for a given partition $\{t_{0},\cdot\cdot\cdot,t_{N}\}=\pi
	_{T}$ of $[0,T]$,
	\[
	\eta_{t}(\omega)=\sum_{j=0}^{N-1}\xi_{j}(\omega)\mathbf{1}_{[t_{j},t_{j+1})}(t),
	\]
	where $\xi_{i}\in L_{ip}(\Omega_{t_{i}})$, $i=0,1,2,\cdot\cdot\cdot,N-1$. For each
	$p\geq1$ and $\eta\in M_G^0(0,T)$, let $\|\eta\|_{H_G^p}:=\{\hat{\mathbb{E}}[(\int_0^T|\eta_s|^2ds)^{p/2}]\}^{1/p}$, $\Vert\eta\Vert_{M_{G}^{p}}:=(\mathbb{\hat{E}}[\int_{0}^{T}|\eta_{s}|^{p}ds])^{1/p}$ and denote by $H_G^p(0,T)$,  $M_{G}^{p}(0,T)$ the completion
	of $M_{G}^{0}(0,T)$ under the norm $\|\cdot\|_{H_G^p}$, $\|\cdot\|_{M_G^p}$, respectively.
\end{definition}

 We denote by $\langle B\rangle$ the quadratic variation process of the $G$-Brownian motion $B$. For two processes $ \xi\in M_{G}^{1}(0,T)$ and $ \eta\in M_{G}^{2}(0,T)$,
the $G$-It\^{o} integrals $(\int^{t}_0\xi_sd\langle
B\rangle_s)_{0\leq t\leq T}$ and $(\int^{t}_0\eta_sdB_s)_{0\leq t\leq T}$ are well defined, see  Li and Peng \cite{lp} and Peng \cite{P19}. The following proposition can be regarded as the Burkholder--Davis--Gundy inequality under $G$-expectation framework
\begin{proposition}[\cite{P19}]\label{BDG}
	If $\eta\in H_G^{\alpha}(0,T)$ with $\alpha\geq 1$ and $p\in(0,\alpha]$, then we have
	\begin{displaymath}
	\underline{\sigma}^p c_p\hat{\mathbb{E}}_t[(\int_t^T |\eta_s|^2ds)^{p/2}]\leq
	\hat{\mathbb{E}}_t[\sup_{u\in[t,T]}|\int_t^u\eta_s dB_s|^p]\leq
	\bar{\sigma}^p C_p\hat{\mathbb{E}}_t[(\int_t^T |\eta_s|^2ds)^{p/2}],
	\end{displaymath}
	where $0<c_p<C_p<\infty$ are constants depending on $p, T$.
\end{proposition}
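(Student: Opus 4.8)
The plan is to transfer the classical Burkholder--Davis--Gundy inequality to the $G$-framework by means of the representation in Theorem \ref{the1.1}, exploiting that under every $\P\in\mathcal{P}$ the quadratic variation $\langle B\rangle$ has density confined to $[\underline{\sigma}^2,\bar{\sigma}^2]$. I would first establish both inequalities for an elementary integrand $\eta\in M_G^0(0,T)$ and for the unconditional expectation $\hE$, and only afterwards pass to the conditional version and to general $\eta\in H_G^\alpha(0,T)$.

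For the per-measure step, fix $\P\in\mathcal{P}$ and set $M_u=\int_t^u\eta_sdB_s$, which for a bounded step process is a genuine continuous $\P$-martingale with $\langle M\rangle_u=\int_t^u|\eta_s|^2d\langle B\rangle_s$. The classical BDG inequality under $\P$ provides constants $0<c_p<C_p<\infty$ with
\begin{displaymath}
c_p\,\E^{\P}\!\Big[\langle M\rangle_T^{p/2}\Big]\leq \E^{\P}\!\Big[\sup_{u\in[t,T]}|M_u|^p\Big]\leq C_p\,\E^{\P}\!\Big[\langle M\rangle_T^{p/2}\Big],
\end{displaymath}
and since $\underline{\sigma}^2\,ds\leq d\langle B\rangle_s\leq\bar{\sigma}^2\,ds$ holds $\P$-a.s., one has $\underline{\sigma}^2\int_t^T|\eta_s|^2ds\leq\langle M\rangle_T\leq\bar{\sigma}^2\int_t^T|\eta_s|^2ds$. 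Substituting yields, for every $\P\in\mathcal{P}$,
\begin{displaymath}
\underline{\sigma}^p c_p\,\E^{\P}\!\Big[\big(\textstyle\int_t^T|\eta_s|^2ds\big)^{p/2}\Big]\leq \E^{\P}\!\Big[\sup_{u\in[t,T]}|M_u|^p\Big]\leq \bar{\sigma}^p C_p\,\E^{\P}\!\Big[\big(\textstyle\int_t^T|\eta_s|^2ds\big)^{p/2}\Big].
\end{displaymath}
Taking the supremum over $\mathcal{P}$ and invoking Theorem \ref{the1.1} gives the claimed bounds for $\hE$ at $t=0$: the upper bound is immediate, while for the lower bound one uses that $\sup_{\P}\E^{\P}[X]\geq\E^{\P}[X]\geq \underline{\sigma}^p c_p\,\E^{\P}[(\int_t^T|\eta_s|^2ds)^{p/2}]$ for each fixed $\P$, and then takes the supremum on the right-hand side over $\mathcal{P}$.

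Two points remain, which I expect to be the main obstacles. First, the conditional statement: here I would replace Theorem \ref{the1.1} by the corresponding representation of the conditional $G$-expectation, $\hE_t[\cdot]=\esssup_{\P'}\E^{\P'}[\cdot\mid\mathcal{F}_t]$ over the measures $\P'\in\mathcal{P}$ agreeing with a fixed $\P$ up to time $t$, and run the same argument with the conditional form of the classical BDG inequality, taking care that the $\P$-a.s. pathwise bounds aggregate correctly under the essential supremum. Second, the extension from $M_G^0(0,T)$ to $\eta\in H_G^\alpha(0,T)$: the upper inequality for step processes shows that $\eta\mapsto\sup_{u}|\int_t^u\eta_s dB_s|$ is continuous from $H_G^\alpha$ into $L_G^p$, which both defines the integral for general $\eta$ and lets both inequalities pass to the limit along an approximating sequence $\eta^n\to\eta$ in $H_G^\alpha$. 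The delicate part throughout is the bookkeeping between the per-measure almost-sure estimates and the nonlinear operations $\sup_\P$ and $\esssup_{\P'}$, rather than any new analytic input beyond the classical BDG inequality and the density bounds on $\langle B\rangle$.
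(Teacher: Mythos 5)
The paper does not prove this proposition; it is quoted from Peng's book \cite{P19}, and the standard argument there and in the related literature is precisely the one you outline: apply the classical (conditional) BDG inequality under each $\P\in\mathcal{P}$, use the pathwise bounds $\underline{\sigma}^2\,ds\leq d\langle B\rangle_s\leq\bar{\sigma}^2\,ds$, pass to the supremum, respectively the essential supremum, via the representation of $\hE$ and $\hE_t$, and extend from step integrands to $H_G^\alpha(0,T)$ by density. Your proposal is correct, and the two points you flag as delicate (aggregating the per-measure estimates under $\esssup_{\P'}$ over the measures agreeing with $\P$ on $[0,t]$, and the limit passage for general $\eta$) are exactly where the cited sources spend their effort, so no essential idea is missing.
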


Similar to the linear stochastic analysis, in the $G$-expectation framework, we can also define martingales and obtain some related properties. The following Doob's type estimate is from \cite{STZ1, Song}, and extended in \cite{LS}. More details about this estimate could be found in \cite{P19}.
\begin{theorem}[\cite{LS}]\label{myw204} Let $1\leq\alpha<\beta .$ Then, for all $\xi \in L_{G}^{\beta}\left(\Omega_{T}\right),$ we can find a constant $\widetilde{C}(\alpha, \beta)>0$ such that
\[
\hat{\mathbb{E}}\Big[\sup _{t \in[0, T]} \hat{\mathbb{E}}_{t}\left[|\xi|^{\alpha}\right]\Big] \leq \widetilde{C}(\alpha, \beta)\hat{\mathbb{E}}\left[|\xi|^{\beta}\right]^{\frac{\alpha}{ \beta}}.
\]
\end{theorem}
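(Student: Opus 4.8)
The plan is to reduce the claim to a maximal inequality for a single nonnegative $G$-martingale and then exploit the representation of $\hat{\mathbb{E}}$ as an upper expectation. Set $\eta:=|\xi|^{\beta}$ and $M_t:=\hat{\mathbb{E}}_t[\eta]$, so that $M_0=\hat{\mathbb{E}}[|\xi|^{\beta}]$ is a deterministic constant. Writing $p:=\alpha/\beta\in(0,1)$, the map $x\mapsto x^{p}$ is concave and nondecreasing on $[0,\infty)$, so the conditional form of Jensen's inequality (Lemma \ref{myw210}) yields, for every $t$,
\[
\hat{\mathbb{E}}_t[|\xi|^{\alpha}]=\hat{\mathbb{E}}_t\big[\eta^{p}\big]\leq \big(\hat{\mathbb{E}}_t[\eta]\big)^{p}=M_t^{p}.
\]
Taking the supremum over $t$ and then applying $\hat{\mathbb{E}}$, it suffices to prove $\hat{\mathbb{E}}\big[(\sup_{t}M_t)^{p}\big]\leq \widetilde{C}(\alpha,\beta)\,M_0^{p}$, that is, a maximal inequality for $M$ at the sub-unit exponent $p$.

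The crux is the observation that, although $M$ is a symmetric $G$-martingale, it is a genuine nonnegative supermartingale under every $\mathsf P\in\mathcal P$. Indeed, for $s\le t$ the tower property gives $\hat{\mathbb{E}}_s[M_t]=M_s$, and since the conditional $G$-expectation dominates the conditional expectation under any representing measure, $\E^{\mathsf P}_s[M_t]\le \hat{\mathbb{E}}_s[M_t]=M_s$, $\mathsf P$-a.s. Classical Doob's maximal inequality for nonnegative supermartingales then gives, for each $\mathsf P\in\mathcal P$ and $\lambda>0$, $\mathsf P(\sup_t M_t>\lambda)\le M_0/\lambda$; taking the supremum over $\mathsf P$ bounds the capacity $V(\sup_t M_t>\lambda)\le M_0/\lambda$. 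Working measure by measure via Theorem \ref{the1.1}, the layer-cake formula gives for each $\mathsf P$
\[
\E^{\mathsf P}\big[(\textstyle\sup_t M_t)^{p}\big]=\int_0^{\infty}p\lambda^{p-1}\,\mathsf P(\textstyle\sup_t M_t>\lambda)\,d\lambda\le \int_0^{\infty}p\lambda^{p-1}\min\!\big(1,\,M_0/\lambda\big)\,d\lambda=\frac{M_0^{p}}{1-p},
\]
and taking the supremum over $\mathsf P\in\mathcal P$ yields $\hat{\mathbb{E}}[(\sup_t M_t)^{p}]\le \frac{1}{1-p}M_0^{p}=\frac{\beta}{\beta-\alpha}\hat{\mathbb{E}}[|\xi|^{\beta}]^{\alpha/\beta}$. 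Combined with the first step this gives the theorem with $\widetilde{C}(\alpha,\beta)=\beta/(\beta-\alpha)$.

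I expect the main obstacle to be technical rather than conceptual: one must first establish everything for $\xi\in L_{ip}(\Omega_T)$ (where $M$ admits a continuous, quasi-continuous modification, so that $\sup_t M_t$ is a bona fide measurable random variable to which the representation and the pathwise Doob inequality apply) and then pass to general $\xi\in L_G^{\beta}(\Omega_T)$ by density, checking that both sides are continuous in the $L_G^{\beta}$-norm. I would also record carefully the two facts used above, namely the domination $\E^{\mathsf P}_s[\cdot]\le\hat{\mathbb{E}}_s[\cdot]$ and the conditional version of Lemma \ref{myw210}, and confirm that $(\sup_t M_t)^{p}$ lies in the domain where $\hat{\mathbb{E}}$ coincides with $\sup_{\mathsf P}\E^{\mathsf P}$. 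The supermartingale-under-each-$\mathsf P$ observation is what makes the argument work and is the reason the estimate must lose a power: the strict gap $\alpha<\beta$ enters through $p<1$, for which the weak maximal inequality delivers the $\frac{1}{1-p}$ constant, whereas the sublinearity of $\hat{\mathbb{E}}$ blocks a same-exponent Doob inequality.
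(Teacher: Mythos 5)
This theorem is imported by the paper from \cite{LS} (ultimately from Song's work on $G$-evaluations) without proof, so there is no in-paper argument to compare against; judged on its own, your proposal is correct and in fact reproduces the standard proof of this estimate: the conditional Jensen step $\hat{\mathbb{E}}_t[|\xi|^{\alpha}]\le(\hat{\mathbb{E}}_t[|\xi|^{\beta}])^{\alpha/\beta}$, the observation that $M_t=\hat{\mathbb{E}}_t[|\xi|^{\beta}]$ is a nonnegative supermartingale under every $\mathsf P\in\mathcal P$ (via $\mathsf E^{\mathsf P}_s[\,\cdot\,]\le\hat{\mathbb{E}}_s[\,\cdot\,]$, which is the essential-supremum representation of the conditional $G$-expectation), the resulting weak $(1,1)$ bound on the capacity, and the layer-cake integration at the sub-unit exponent $p=\alpha/\beta$ giving the constant $\beta/(\beta-\alpha)$. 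The only points requiring care are exactly the ones you flag: the conditional form of Lemma \ref{myw210} (provable by dominating $x\mapsto x^{p}$ on $[0,\infty)$ by its tangent lines $(1-p)\lambda^{p}+p\lambda^{p-1}x$ over a countable dense set of $\lambda>0$), the path regularity of $M$ needed for the pathwise Doob inequality, and the passage from $L_{ip}(\Omega_T)$ to $L_G^{\beta}(\Omega_T)$ by density, at the cost of an inessential worsening of the constant.
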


Let $S_G^0(0,T)=\{h(t,B_{t_1\wedge t}, \ldots,B_{t_n\wedge t}):t_1,\ldots,t_n\in[0,T],h\in C_{b,Lip}(\mathbb{R}^{n+1})\}$. For $p\geq 1$ and $\eta\in S_G^0(0,T)$, set $\|\eta\|_{S_G^p}=\{\hat{\mathbb{E}}[\sup_{t\in[0,T]}|\eta_t|^p]\}^{1/p}$. Denote by $S_G^p(0,T)$ the completion of $S_G^0(0,T)$ under the norm $\|\cdot\|_{S_G^p}$.  We have the following continuity property for any $Y\in S_G^p(0,T)$ with $p>1$.

\begin{proposition}[\cite{LPS}]\label{the3.7}
For $Y\in S_G^p(0,T)$ with $p\geq1$, we have, by setting $Y_s:=Y_T$ for $s>T$,
\begin{displaymath}
F(Y):=\limsup_{\varepsilon\rightarrow0}(\hat{\mathbb{E}}[\sup_{t\in[0,T]}\sup_{s\in[t,t+\varepsilon]}|Y_t-Y_s|^p])^\frac{1}{p}=0.
\end{displaymath}
\end{proposition}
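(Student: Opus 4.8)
The plan is to prove the statement by a density argument: establish the vanishing of $F$ first on the dense class of smooth simple processes $S_G^0(0,T)$, and then transfer it to all of $S_G^p(0,T)$ using subadditivity together with a uniform bound of the relevant functional by the $S_G^p$-norm.

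For $\varepsilon>0$ write $F_\varepsilon(Y):=(\hat{\mathbb{E}}[\sup_{t\in[0,T]}\sup_{s\in[t,t+\varepsilon]}|Y_t-Y_s|^p])^{1/p}$, so that $F(Y)=\limsup_{\varepsilon\to0}F_\varepsilon(Y)$. First I would record two elementary properties of $F_\varepsilon$. From $\sup_t\sup_s|Y^1_t+Y^2_t-(Y^1_s+Y^2_s)|\le\sup_t\sup_s|Y^1_t-Y^1_s|+\sup_t\sup_s|Y^2_t-Y^2_s|$ together with the Minkowski inequality for the sublinear expectation $\hat{\mathbb{E}}$, one obtains subadditivity $F_\varepsilon(Y^1+Y^2)\le F_\varepsilon(Y^1)+F_\varepsilon(Y^2)$; and from $|Y_t-Y_s|\le2\sup_u|Y_u|$ one obtains the uniform bound $F_\varepsilon(Y)\le2\|Y\|_{S_G^p}$ for every $\varepsilon$. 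These two facts reduce the problem to the dense subset $S_G^0(0,T)$: given $Y\in S_G^p(0,T)$ and $\delta>0$, pick $\eta\in S_G^0(0,T)$ with $\|Y-\eta\|_{S_G^p}<\delta$, so that $F_\varepsilon(Y)\le F_\varepsilon(\eta)+2\delta$ and hence $F(Y)\le F(\eta)+2\delta$; once $F(\eta)=0$ is known this gives $F(Y)\le2\delta$ for all $\delta$, i.e. $F(Y)=0$.

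It then remains to prove $F(\eta)=0$ for $\eta_t=h(t,B_{t_1\wedge t},\dots,B_{t_n\wedge t})$ with $h\in C_{b,Lip}(\mathbb{R}^{n+1})$. Denoting by $L$ the Lipschitz constant of $h$ and by $w(\varepsilon):=\sup\{|B_u-B_v|:u,v\in[0,T],|u-v|\le\varepsilon\}$ the modulus of continuity of the canonical path, I would use that $t\mapsto t_i\wedge t$ is $1$-Lipschitz to bound, for $s\in[t,t+\varepsilon]$, $|\eta_t-\eta_s|\le L(|t-s|+\sum_{i=1}^n|B_{t_i\wedge t}-B_{t_i\wedge s}|)\le L(\varepsilon+n\,w(\varepsilon))$ (the convention $Y_s=Y_T$ for $s>T$ only shrinks the relevant time gaps, so the same bound persists). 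Hence $F_\varepsilon(\eta)\le L\varepsilon+Ln(\hat{\mathbb{E}}[w(\varepsilon)^p])^{1/p}$, and the whole statement is reduced to the single claim $\hat{\mathbb{E}}[w(\varepsilon)^p]\to0$ as $\varepsilon\to0$.

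This last claim is the main obstacle. Since every path of $B$ is continuous, hence uniformly continuous on the compact $[0,T]$, we have $w(\varepsilon)\downarrow0$ quasi-surely as $\varepsilon\downarrow0$, and $0\le w(\varepsilon)\le2\sup_{t\in[0,T]}|B_t|$ with $\sup_t|B_t|\in L_G^p(\Omega_T)$ by the known moment estimates for $G$-Brownian motion. I would then conclude by a dominated-convergence argument in the $G$-framework: for each truncation level $N$ the bounded continuous functionals $w(\varepsilon)^p\wedge N$ decrease to $0$ everywhere, so the regularity of $\hat{\mathbb{E}}$ (continuity from above on bounded continuous functions, guaranteed by the weak compactness of the representing set $\mathcal{P}$ from Theorem \ref{the1.1}) gives $\hat{\mathbb{E}}[w(\varepsilon)^p\wedge N]\to0$; the discarded tail is controlled uniformly in $\varepsilon$ by $\hat{\mathbb{E}}[(2\sup_t|B_t|)^p\mathbf{1}_{\{(2\sup_t|B_t|)^p>N\}}]$, which tends to $0$ as $N\to\infty$ precisely because $(2\sup_t|B_t|)^p\in L_G^1(\Omega_T)$. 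Letting first $\varepsilon\to0$ and then $N\to\infty$ yields $\hat{\mathbb{E}}[w(\varepsilon)^p]\to0$, which completes the argument. The only genuinely delicate point is this modulus-of-continuity estimate, where one must pass from the pathwise (quasi-sure) continuity of $B$ to convergence of the $G$-expectations, and this is exactly where the weak compactness of $\mathcal{P}$ and the $L_G^1$-integrability of $\sup_t|B_t|$ are indispensable.
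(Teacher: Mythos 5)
Your argument is correct, and it follows essentially the same route as the proof of this result in \cite{LPS}, which the paper simply quotes without proof: reduce to the dense class $S_G^0(0,T)$ via the subadditivity of $F_\varepsilon$ and the uniform bound $F_\varepsilon(Y)\leq 2\|Y\|_{S_G^p}$, then handle simple functionals $h(t,B_{t_1\wedge t},\dots,B_{t_n\wedge t})$ by the Lipschitz property of $h$ and the vanishing of the path modulus of continuity. Your treatment of the one genuinely delicate step, namely $\hat{\mathbb{E}}[w(\varepsilon)^p]\to 0$, via truncation, continuity from above of $\hat{\mathbb{E}}$ on bounded continuous functionals (weak compactness of $\mathcal{P}$), and the integrability of $\sup_t|B_t|$ from Proposition \ref{BDG}, is sound.
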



\subsection{Backward Skorokhod problem with two nonlinear reflecting boundaries}

In this subsection, we will recall the backward Skorokhod problem studied in \cite{Li}, which is important for the construction of solutions to $G$-BSDEs with double mean reflections. First, we introduce the following notations.

\begin{itemize}
\item $C[0,T]$: the set of continuous functions from $[0,T]$ to $\mathbb{R}$.
\item $BV[0,T]$: the set of functions in $C[0,T]$ starting from the origin with bounded variation on $[0,T]$.
\item $I[0,T]$: the set of functions in $C[0,T]$ starting from the origin which is nondecreasing.
\end{itemize}

\begin{definition}\label{def}
Let $s\in C[0,T]$, $a\in \mathbb{R}$ and $l,r:[0,T]\times \mathbb{R}\rightarrow \mathbb{R}$ be two functions such that $l\leq r$ and $l(T,a)\leq 0\leq r(T,a)$. A pair of functions $(x,k)\in C[0,T]\times BV[0,T]$ is called a solution of the backward Skorokhod problem for $s$ with nonlinear constraints $l,r$ ($(x,k)=\mathbb{BSP}_l^r(s,a)$ for short) if 
\begin{itemize}
\item[(i)] $x_t=a+s_T-s_t+k_T-k_t$;
\item[(ii)] $l(t,x_t)\leq 0\leq r(t,x_t)$, $t\in[0,T]$;
\item[(iii)] $k_{0}=0$ and $k$ has the decomposition $k=k^r-k^l$, where $k^r,k^l\in I[0,T]$ satisfy 
\begin{align}\label{iii}
\int_0^T l(s,x_s)dk^l_s=0, \ \int_0^T r(s,x_s)dk^r_s=0.
\end{align}
\end{itemize}
\end{definition}

In order to solve the backward Skorokhod problem, we make the following assumptions on the reflecting boundary functions.
\begin{assumption}\label{ass1}
The functions $l,r:[0,T]\times \mathbb{R}\rightarrow \mathbb{R}$ satisfy the following conditions:
\begin{itemize}
\item[(i)] For each fixed $x\in\mathbb{R}$, $l(\cdot,x),r(\cdot,x)\in C[0,T]$;
\item[(ii)] For any fixed $t\in[0,T]$, $l(t,\cdot)$, $r(t,\cdot)$ are strictly increasing;
\item[(iii)] There exists two positive constants $0<c<C<\infty$, such that for any $t\in [0,T]$ and $x,y\in \mathbb{R}$,
\begin{align*}
&c|x-y|\leq |l(t,x)-l(t,y)|\leq C|x-y|,\\
&c|x-y|\leq |r(t,x)-r(t,y)|\leq C|x-y|.
\end{align*}
\item[(iv)] $\inf_{(t,x)\in[0,T]\times\mathbb{R}}(r(t,x)-l(t,x))>0$.
\end{itemize}
\end{assumption}

\begin{theorem}[\cite{Li}]\label{BSP}
Let Assumption \ref{ass1} hold. For any given $s\in C[0,T]$ and $a\in \mathbb{R}$ with $l(T,a)\leq 0\leq r(T,a)$, there exists a unique solution to the backward Skorokhod problem $(x,k)=\mathbb{BSP}_l^r(s,a)$.
\end{theorem}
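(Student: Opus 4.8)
The plan is to first translate the nonlinear constraints into reflection between two moving scalar barriers. For each $t$, since $r(t,\cdot)$ and $l(t,\cdot)$ are strictly increasing and bi-Lipschitz (Assumption \ref{ass1}(ii)--(iii)), the equations $r(t,\beta_t)=0$ and $l(t,\gamma_t)=0$ define unique barriers $\beta_t,\gamma_t$, and the constraint in Definition \ref{def}(ii) is equivalent to $\beta_t\leq x_t\leq \gamma_t$. Using that $r(t,0),l(t,0)$ are bounded on $[0,T]$ together with the lower Lipschitz bound $c$, the barriers are bounded; combining the continuity of $r(\cdot,x),l(\cdot,x)$ (Assumption \ref{ass1}(i)) with uniform continuity of $r,l$ on the resulting compact set shows $\beta,\gamma\in C[0,T]$. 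Moreover the upper Lipschitz bound $C$ and condition (iv) give the uniform gap $\gamma_t-\beta_t\geq \tfrac1C\inf_{t,x}(r-l)(t,x)>0$. Finally, since $r(s,x_s)\geq 0$ with equality exactly when $x_s=\beta_s$, the Skorokhod condition $\int_0^T r(s,x_s)\,dk^r_s=0$ is equivalent to requiring $dk^r$ to be carried by $\{s:x_s=\beta_s\}$, and symmetrically for $k^l$ and $\gamma$. Thus $\mathbb{BSP}_l^r(s,a)$ is exactly a backward double Skorokhod problem reflecting a continuous input in the band $[\beta_t,\gamma_t]$ of uniformly positive width.

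\textbf{Uniqueness.} Let $(x,k)$ and $(x',k')$ be two solutions and set $\delta_t=x_t-x'_t$, so that $\delta_T=0$ and, by (i), $d\delta_t=-(dk_t-dk'_t)$ as signed measures. Since $\delta$ is a continuous function of bounded variation, the chain rule for such functions gives
\[
(x_t-x'_t)^2=2\int_t^T (x_u-x'_u)\,d\bigl(k^r_u-k^l_u-{k'}^{r}_u+{k'}^{l}_u\bigr).
\]
I would then bound each of the four integrals separately using the support property established above: $dk^r$ lives on $\{x=\beta\}$, where $x-x'=\beta-x'\leq 0$; $dk^l$ lives on $\{x=\gamma\}$, where $x-x'\geq 0$; and symmetrically for ${k'}^{r},{k'}^{l}$. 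Hence every term on the right is nonpositive, forcing $(x_t-x'_t)^2\leq 0$ for all $t$, i.e. $x\equiv x'$. Then $k\equiv k'$ by (i), and since $k^r$ and $k^l$ increase only on the disjoint sets $\{x=\beta\}$ and $\{x=\gamma\}$ (here the gap $\gamma-\beta>0$ is used), the decomposition $k=k^r-k^l$ is the Jordan decomposition and is therefore unique.

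\textbf{Existence.} I would obtain existence by reducing to the classical (forward) double Skorokhod map. Reversing time via $\tilde x_t=x_{T-t}$, $\tilde s_t=s_T-s_{T-t}$, $\tilde k_t=k_T-k_{T-t}$ turns (i) into the forward equation $\tilde x_t=a+\tilde s_t+\tilde k_t$ with $\tilde s_0=\tilde k_0=0$, the band becomes $[\beta_{T-t},\gamma_{T-t}]$, and the Skorokhod conditions are preserved. The existence and Lipschitz dependence of the double Skorokhod map for a continuous input reflected in a band of uniformly positive width is classical; applying it to $\tilde s$ and reversing time produces a solution $(x,k)$, whose reflecting part is split as $k^r-k^l$ according to which barrier is active. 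Alternatively, a self-contained construction proceeds by Picard-type iteration: solve single-barrier backward reflection problems (which admit explicit running-extremum formulas) alternately against $\beta$ and $\gamma$, and show the iteration is a sup-norm contraction. \textbf{The main obstacle} is precisely this coupling of the two reflections: a single push, explicit on its own, may drive the path across the opposite barrier, so one must control the interaction of $k^r$ and $k^l$. This is where condition (iv), i.e. the uniformly positive width $\gamma_t-\beta_t$ together with the Lipschitz constants $c,C$, is essential---it both guarantees the band is never empty and supplies the contraction constant that makes the iteration converge; without it the two barriers could meet and the problem need not be well posed.
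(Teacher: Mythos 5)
Your proposal is correct, and it reaches the result by a route that differs in one substantive respect from the one the paper relies on. The paper does not prove Theorem \ref{BSP} itself but imports it from \cite{Li}; as the proof of Theorem \ref{AA209} reveals, the argument there is a pure time reversal ($x_t=\widetilde{x}_{T-t}$, $k_t=\widetilde{k}_T-\widetilde{k}_{T-t}$, $\widetilde{l}(t,x)=l(T-t,x)$, etc.) reducing $\mathbb{BSP}_l^r(s,a)$ to the \emph{forward} Skorokhod problem with two \emph{nonlinear} constraints, whose well-posedness is established in \cite{Li2}. You share the time-reversal step, but before (or instead of) invoking the nonlinear forward theory you linearize the constraints: solving $r(t,\beta_t)=0$ and $l(t,\gamma_t)=0$ converts $l(t,x_t)\le 0\le r(t,x_t)$ into reflection in a moving band $[\beta_t,\gamma_t]$, and your verifications (continuity and boundedness of the barriers via Assumption \ref{ass1}(i),(iii), the uniform gap $\gamma_t-\beta_t\ge C^{-1}\inf(r-l)>0$ via (iii)--(iv), and the equivalence of $\int_0^T r(s,x_s)\,dk^r_s=0$ with $dk^r$ being carried by $\{x=\beta\}$) are all accurate, so the reduction is exact. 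Your uniqueness argument via $\delta_t^2=2\int_t^T\delta_u\,d(k_u-k'_u)$ with the four support-based sign estimates is sound (the paper's $BV[0,T]$ consists of continuous functions, so the chain rule applies), as is the observation that mutual singularity of $dk^r$ and $dk^l$ pins down the decomposition. What each approach buys: yours is more elementary in that existence follows from the classical two-sided Skorokhod map in a time-varying band of positive width, a textbook result; the nonlinear formulation of \cite{Li,Li2} keeps the loss functions themselves as data, which is what makes the stability estimate of Theorem \ref{BSP'} come out directly in terms of $\sup|l^1-l^2|$ and $\sup|r^1-r^2|$ --- the form actually needed in Propositions \ref{proposition} and \ref{prop7} and in the contraction argument of Theorem \ref{thm3.10}. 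A band-based proof would deliver stability in terms of $\sup|\beta^1-\beta^2|$, which then has to be translated back through the Lipschitz constants, so for the purposes of this paper the nonlinear version is the more convenient black box even though your construction is a legitimate and arguably simpler proof of the existence--uniqueness statement itself.
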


The following theorem gives the continuous dependence of the solution to the backward Skorokhod problem with respect to the terminal value, the input function and the reflecting boundary functions.

\begin{theorem}[\cite{Li}]\label{BSP'}
Given $a^i\in\mathbb{R}$, $s^i\in C[0,T]$, $l^i,r^i$ satisfying Assumption \ref{ass1} and $l^i(T,a^i)\leq 0\leq r^i(T,a^i)$, $i=1,2$,  let $(x^i,k^i)$ be the solution to the backward Skorokhod problem $\mathbb{BSP}_{l^i}^{r^i}(s^i,a^i)$. Then, we have
\begin{equation}\label{diffK}
\sup_{t\in[0,T]}|k^1_t-k^2_t|
\leq  2\frac{C}{c}|a^1-a^2|+4\frac{C}{c}\sup_{t\in[0,T]}|s^1_t-s^2_t|+\frac{2}{c}(\bar{L}_T\vee\bar{R}_T),
\end{equation}
where
\begin{align*}
&\bar{L}_T=\sup_{(t,x)\in[0,T]\times \mathbb{R}}|{l}^1(t,x)-{l}^2(t,x)|,\\
&\bar{R}_T=\sup_{(t,x)\in[0,T]\times \mathbb{R}}|{r}^1(t,x)-{r}^2(t,x)|.
\end{align*}
\end{theorem}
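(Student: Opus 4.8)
The plan is to reduce the desired bound on the reflection difference $\sup_t|k^1_t-k^2_t|$ to a continuous-dependence estimate for the state component $x$, and then to prove the latter directly from the Skorokhod minimality conditions \eqref{iii} together with the bi-Lipschitz bounds in Assumption \ref{ass1}(iii). The bridge is an exact identity that I would extract from Definition \ref{def}(i): writing $x_t=a+s_T-s_t+k_T-k_t$ and using $k_0=0$ to eliminate the unknown constant $k_T$, one finds, for a single solution, $k_t=(x_0-x_t)+(s_0-s_t)$. Applying this to both solutions and subtracting gives $k^1_t-k^2_t=(x^1_0-x^2_0)-(x^1_t-x^2_t)+(s^1_0-s^2_0)-(s^1_t-s^2_t)$, whence $\sup_t|k^1_t-k^2_t|\le 2\sup_t|x^1_t-x^2_t|+2\sup_t|s^1_t-s^2_t|$. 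This already produces, up to the universal factor $2$, the structure of every term of \eqref{diffK}, so everything reduces to estimating $\sup_t|x^1_t-x^2_t|$.

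For the state estimate it is convenient to reverse time, $\tau=T-t$, which turns the backward Skorokhod problem into a standard forward doubly reflected one: $\tilde x_\tau=a+\tilde s_\tau+\tilde k_\tau$ with $\tilde x_0=a$, with $\tilde k=\tilde k^r-\tilde k^l$ a difference of nondecreasing parts, and with the minimality integrals \eqref{iii} carrying over intact. Setting $\delta_\tau=\tilde x^1_\tau-\tilde x^2_\tau$ and $p_\tau=\tilde k^1_\tau-\tilde k^2_\tau$, so that $\delta_\tau=(a^1-a^2)+(\tilde s^1_\tau-\tilde s^2_\tau)+p_\tau$, I would bound $\delta$ from above, the lower bound following by exchanging the roles of the two problems. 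The key observation is that the net upward push $p$ can only increase at instants when $\tilde x^1$ sits on its lower barrier (where $d\tilde k^{r,1}$ charges) or $\tilde x^2$ sits on its upper barrier (where $d\tilde k^{l,2}$ charges); at such an instant the two positions are pinned to the respective barriers, and the gap between the lower (resp. upper) barriers of problems $1$ and $2$ is controlled, via the lower Lipschitz constant $c$, by $\tfrac1c\bar R_T$ (resp. $\tfrac1c\bar L_T$). Hence $p$ increases only while $\delta_\tau\le \tfrac1c(\bar L_T\vee\bar R_T)$. A running-maximum argument then closes the estimate: either $\delta$ stays above this threshold on $[0,\tau]$, in which case $p$ is nonincreasing there and $\delta_\tau$ is controlled by the input discrepancy $|a^1-a^2|+\sup|\tilde s^1-\tilde s^2|$; or one backtracks to the last time $\delta$ meets the threshold, past which $p$ only decreases and the gap can grow only through the increment of $\tilde s^1-\tilde s^2$. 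Tracking the bi-Lipschitz constants $c\le\cdot\le C$ through this comparison, the upper constant $C$ entering when reflection amounts are measured against positions, yields $\sup_t|x^1_t-x^2_t|$ bounded by the appropriate multiples of $|a^1-a^2|$, $\sup_t|s^1_t-s^2_t|$ and $\tfrac1c(\bar L_T\vee\bar R_T)$, and feeding this into the identity above gives \eqref{diffK}.

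The hard part is precisely this state estimate under two simultaneously present, moving and merely $t$-continuous barriers. One cannot differentiate the barrier functions along the paths, since Assumption \ref{ass1} supplies only continuity in $t$ and Lipschitz regularity in $x$, so the usual integration-by-parts and Gronwall route is unavailable; the argument must instead be driven entirely by the values of $l$ and $r$ at the active instants furnished by \eqref{iii}, and must correctly sign the contributions of all four nondecreasing processes $\tilde k^{r,i},\tilde k^{l,i}$ at once. Here Assumption \ref{ass1}(iv), which guarantees a nondegenerate corridor $r-l>0$, is what ensures that the two one-sided pushes can never act at the same time, keeping the running-maximum bookkeeping consistent.
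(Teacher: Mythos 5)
The paper does not actually prove Theorem \ref{BSP'}: it is imported from \cite{Li}, and the only related in-paper argument is the proof of Theorem \ref{AA209}, which uses the same time-reversal device you employ. Judged on its own terms, your strategy is essentially sound, and you have identified the correct key lemma: after reversing time, the increasing part $d(\tilde{k}^{r,1}+\tilde{k}^{l,2})$ of $p_\tau=\tilde{k}^1_\tau-\tilde{k}^2_\tau$ is supported on the set where $\delta_\tau=\tilde{x}^1_\tau-\tilde{x}^2_\tau\leq \frac{1}{c}(\bar{L}_T\vee\bar{R}_T)$. One small overstatement: at a charging instant of $d\tilde{k}^{r,1}$ only $\tilde{x}^1$ is pinned to its barrier, $\tilde{r}^1(u,\tilde{x}^1_u)=0$, while $\tilde{x}^2$ merely satisfies $\tilde{r}^2(u,\tilde{x}^2_u)\geq 0$; but the comparison $c\,\delta_u\leq \tilde{r}^1(u,\tilde{x}^1_u)-\tilde{r}^1(u,\tilde{x}^2_u)\leq \bar{R}_T$ (when $\delta_u\geq 0$) only needs this one-sided information, so the step survives. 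Also, Assumption \ref{ass1}(iv) is not what keeps the bookkeeping consistent here --- it is needed for existence in Theorem \ref{BSP}; the charging-instant comparison uses only the support properties of the four monotone measures separately.

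The genuine issue is quantitative: routing the estimate through $\sup_t|x^1_t-x^2_t|$ and the identity $k_t=(x_0-x_t)+(s_0-s_t)$ does not reproduce the stated constants. Your threshold/backtracking argument gives $\sup_\tau|\delta_\tau|\leq |a^1-a^2|+4\sup_t|s^1_t-s^2_t|+\frac{1}{c}(\bar{L}_T\vee\bar{R}_T)$ (the factor $4$ arises because $\tilde{s}^1-\tilde{s}^2$ enters through an increment and each $\tilde{s}^i_\tau=s^i_T-s^i_{T-\tau}$ already doubles $\sup_t|s^1_t-s^2_t|$), and feeding this into $\sup_t|k^1_t-k^2_t|\leq 2\sup_t|x^1_t-x^2_t|+2\sup_t|s^1_t-s^2_t|$ yields a coefficient $10$ on $\sup_t|s^1_t-s^2_t|$, which does not imply \eqref{diffK} unless $C/c\geq 5/2$. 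The fix is to run the identical argument directly on $p$ rather than on $\delta$: since $k^i_t=\tilde{k}^i_T-\tilde{k}^i_{T-t}$, one has $\sup_t|k^1_t-k^2_t|=\sup_\tau|p_T-p_\tau|\leq 2\sup_\tau|p_\tau|$; evaluating $p_\tau\leq p_{u^*}=\delta_{u^*}-(a^1-a^2)-(\tilde{s}^1_{u^*}-\tilde{s}^2_{u^*})$ at the last charging time $u^*\leq\tau$ of $d(\tilde{k}^{r,1}+\tilde{k}^{l,2})$ (and noting $p_\tau\leq 0$ if there is none), together with the symmetric lower bound, gives $\sup_\tau|p_\tau|\leq |a^1-a^2|+2\sup_t|s^1_t-s^2_t|+\frac{1}{c}(\bar{L}_T\vee\bar{R}_T)$, hence exactly the right-hand side of \eqref{diffK} (indeed with $2,4$ in place of $2\frac{C}{c},4\frac{C}{c}$). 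With that rerouting your proof is complete.
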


In what follows, we would like to present a useful estimate which  will play a crucial role in the Picard  iteration argument in Subsection 3.4.
\begin{theorem}\label{AA209}
Suppose that $(x,k)$ is the solution to the backward Skorokhod problem $\mathbb{BSP}_{l}^{r}(s,a)$. Then there exists a constant $\widetilde{C}(C,c,T,r,l)>0$ such that for any $t\in[0,T]$,
\begin{align*}
|k_T-k_t|\leq \widetilde{C}(C,c,T,r,l)\big(|a+s_T-s_t|+1\big) .
\end{align*}
\end{theorem}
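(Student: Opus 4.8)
The plan is to reduce everything to a uniform bound on $\sup_{t\in[0,T]}|x_t|$, after which the estimate is a one-line rearrangement. Indeed, property (i) in Definition \ref{def} gives $k_T-k_t=x_t-(a+s_T-s_t)$, so by the triangle inequality
$$|k_T-k_t|\leq |x_t|+|a+s_T-s_t|.$$
Thus it suffices to produce a constant $M=M(c,l,r,T)$ with $|x_t|\leq M$ for every $t$, and then take $\widetilde C(C,c,T,r,l)=1\vee M$.

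To obtain the uniform bound on $x$, I would exploit the membership constraint (ii), $l(t,x_t)\leq 0\leq r(t,x_t)$, together with the bi-Lipschitz and monotonicity properties of Assumption \ref{ass1}. The key observation is that the lower Lipschitz estimate in Assumption \ref{ass1}(iii) turns a one-sided barrier condition into a two-sided bound once the sign of $x_t$ is fixed. Concretely, if $x_t\geq 0$ then monotonicity of $l(t,\cdot)$ forces $l(t,0)\leq l(t,x_t)\leq 0$, and the lower Lipschitz bound gives $c|x_t|\leq |l(t,x_t)-l(t,0)|\leq |l(t,0)|$; symmetrically, if $x_t<0$ then $0\leq r(t,x_t)\leq r(t,0)$ and $c|x_t|\leq |r(t,0)|$. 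In either case
$$|x_t|\leq \frac{1}{c}\max\Big(\sup_{s\in[0,T]}|l(s,0)|,\ \sup_{s\in[0,T]}|r(s,0)|\Big)=:M,$$
which is finite because $l(\cdot,0),r(\cdot,0)\in C[0,T]$ by Assumption \ref{ass1}(i). Equivalently, one may introduce the zero curves $\alpha(t),\beta(t)$ defined by $l(t,\alpha(t))=0=r(t,\beta(t))$, note that $\beta(t)\leq x_t\leq\alpha(t)$ by strict monotonicity, and bound $|\alpha(t)|,|\beta(t)|$ by comparing the barrier values at $x$ and at $0$; this yields the same constant.

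Combining the two displays gives $|k_T-k_t|\leq |a+s_T-s_t|+M\leq (1\vee M)\big(|a+s_T-s_t|+1\big)$, which is the claim. I do not anticipate a genuine obstacle here; the only points requiring care are to keep the constant expressed solely in terms of the permitted quantities, and to notice that the Skorokhod minimality conditions (iii) are \emph{not} needed for this a priori bound — mere membership of $x_t$ in the barrier region is what drives the estimate. The one structural input to verify is the finiteness of $\sup_s|l(s,0)|$ and $\sup_s|r(s,0)|$, which is immediate from continuity of $l(\cdot,0),r(\cdot,0)$ on the compact interval $[0,T]$.
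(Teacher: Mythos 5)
Your argument is correct, and it reaches the bound by a more direct route than the paper. The paper's proof first reverses time to reduce $\mathbb{BSP}_{l}^{r}(s,a)$ to a forward Skorokhod problem and then invokes the comparison $\Psi_t\leq\widetilde{k}_t\leq\Phi_t$ from the proof of Theorem 2.19 in \cite{Li2}, where $\Psi_t,\Phi_t$ are defined implicitly by $\widetilde{l}(t,\widetilde{s}_t+\Psi_t)=0$ and $\widetilde{r}(t,\widetilde{s}_t+\Phi_t)=0$; the lower Lipschitz bound is then applied at the point $\widetilde{s}_t$, which is how the term $|a+s_T-s_t|$ enters. You instead stay with the backward problem as given: the constraint (ii) alone pins $x_t$ between the zero-level curves of $r(t,\cdot)$ and $l(t,\cdot)$, the lower Lipschitz bound at the point $0$ gives the uniform estimate $|x_t|\leq \frac{1}{c}\sup_{u\in[0,T]}\bigl(|l(u,0)|\vee|r(u,0)|\bigr)$, and the identity (i) transfers this to $k_T-k_t$, producing the $|a+s_T-s_t|$ term from the decomposition rather than from the Lipschitz estimate. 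Both proofs rest on the same two ingredients (the membership constraint and the lower bi-Lipschitz bound), and you are right that the minimality conditions (iii) play no role; your version is self-contained, avoids the external citation and the time reversal, and in fact yields the slightly sharper form $|k_T-k_t|\leq |a+s_T-s_t|+M$ with coefficient $1$ on the first term. What the paper's route buys is only that it reuses the forward-problem machinery already set up for the existence proof. One cosmetic remark: your constant does not actually depend on $C$ or $T$ except through $\sup_u|l(u,0)|$ and $\sup_u|r(u,0)|$, which is harmless since the statement permits dependence on $(C,c,T,r,l)$.
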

\begin{proof}
Recalling the proof of Theorem 3.9 in \cite{Li}, if $(\widetilde{x},\widetilde{k})$ is the solution to the Skorokhod problem $\mathbb{SP}_{\widetilde{l}}^{\widetilde{r}}(\widetilde{s})$, then $(x,k)$ is the solution to $\mathbb{BSP}_{l}^{r}(s,a)$, where  for any $t\in[0,T]$,
\begin{align*}
&x_t=\widetilde{x}_{T-t},\ k_t=\widetilde{k}_T-\widetilde{k}_{T-t},\\
&\widetilde{s}_t:=a+s_T-s_{T-t},\  \widetilde{l}(t, x):=l(T-t, x), \ \widetilde{r}(t, x):=r(T-t, x).
\end{align*}
 Proceeding identically as the proof of Theprem 2.19 in \cite{Li2}, we could get
\begin{align*}
\Psi_t\leq \widetilde{k}_t \leq \Phi_t,
\end{align*}
where $\Psi_t,\Phi_t$ satisfy $\widetilde{l}(t,\widetilde{s}_t+\Psi_t)=0,\widetilde{r}(t,\widetilde{s}_t+\Phi_t)=0$ respectively, which further implies
\begin{align*}
|\widetilde{k}_t|\leq|\Psi_t|+| \Phi_t|.
\end{align*}
It follows from  the bi-Lipschitz continuity of $l,r$ in Assumption \ref{ass1} (iii) that
\begin{align*}
|\Psi_t|+| \Phi_t| &\leq \frac{1}{c}\Big(|\widetilde{l}(t,\widetilde{s}_t)|+|\widetilde{r}(t,\widetilde{s}_t)|\Big)\leq \frac{C}{c}\big(|\widetilde{l}(t,0)|+|\widetilde{r}(t,0)|+2|\widetilde{s}_t|\big)\\
&\leq \frac{C}{c}\big(\sup_{t\in[0,T]}|{l}(t,0)|+\sup_{t\in[0,T]}|{r}(t,0)|+2|\widetilde{s}_t|\big).
\end{align*}
In view of (i) in Assumption \ref{ass1}, we have $\sup_{t\in[0,T]}|{l}(t,0)|+\sup_{t\in[0,T]}|{r}(t,0)|\leq \widetilde{C}(T,r,l)$. 
Thus 
\begin{align*}
|k_T-k_t|=|\widetilde{k}_{T-t}|\leq \widetilde{C}(C,c,T,r,l) \big(|\widetilde{s}_{T-t}|+1\big)= \widetilde{C}(C,c,T,r,l) \big(|a+s_T-s_t|+1\big),
\end{align*}
which ends the proof.
\end{proof}


\section{$G$-BSDEs with double mean reflections}

\subsection{The case of constant coefficients}

In this subsection, we first study the well-posedness of BSDEs with double mean reflections driven by $G$-Brownian motion of the following form
\begin{equation}\label{nonlinearnoyz}
\begin{cases}
Y_t=\xi+\int_t^T C_sds+\int_t^T D_s d\langle B\rangle_s-\int_t^T Z_s dB_s-(K_T-K_t)+(A_T-A_t), \\
\hE[L(t,Y_t)]\leq 0\leq \hE[R(t,Y_t)], \\
A_t=A^R_t-A^L_t, \ A^R,A^L\in I[0,T] \textrm{ and } \int_0^T \hE[R(t,Y_t)]dA_t^R=\int_0^T \hE[L(t,Y_t)]dA^L_t=0,
\end{cases}
\end{equation}
where $C,D$ are two given processes in $M_G^\beta(0,T)$ with $\beta>1$ and $L,R$ are two running loss functions. We make the following assumption on $L,R$.

\begin{assumption}\label{ass2}
The running loss functions $L,R:\Omega_T\times [0,T]\times\mathbb{R}\rightarrow \mathbb{R}$ satisfy the following conditions:
\begin{itemize}
\item[(1)] $(t,x)\rightarrow L(\omega,t,x)$, $(t,x)\rightarrow R(\omega,t,x)$ are uniformly continuous, uniform in $\omega$;
\item[(2)]  for any fixed $(t,x)\in [0,T]\times \mathbb{R}$, $L(t,x),R(t,x)\in L_G^1(\Omega_T)$;
\item[(3)] for any fixed $(\omega,t)\in \Omega_T\times [0,T]$, $L(\omega,t,\cdot),R(\omega,t,\cdot)$ are strictly increasing and there exists two constants $0<c<C<\infty$ such that for any $x,y\in \mathbb{R}$,
\begin{align*}
&c|x-y|\leq |L(\omega,t,x)-L(\omega,t,y)|\leq C|x-y|,\\
&c|x-y|\leq |R(\omega,t,x)-R(\omega,t,y)|\leq C|x-y|;
\end{align*}
\item[(4)] for any fixed $(\omega,t)\in \Omega_T\times [0,T]$, there exists a constant $M\geq 0$ such that
\begin{align*}
|L(t,y)|\leq M(1+|y|), \ |R(t,y)|\leq M(1+|y|);
\end{align*}
\item[(5)] $\inf_{(\omega,t,x)\in \Omega_T\times [0,T]\times\mathbb{R}} (R(\omega,t,x)-L(\omega,t,x))>0$.
\end{itemize}
\end{assumption}

\begin{remark}
    Conditions (1)-(4) in Assumption \ref{ass2} are the common requirements for the loss functions under $G$-framework (see Assumptions ($H_l$) and ($H'_l$) in \cite{LiuW}). The solution to doubly mean reflected $G$-BSDE is closely related to the solution to a certain backward Skorokhod problem. The effect of Condition (5) is to make sure the well-posedness of this backward Skorokhod problem since it requires the completely separated condition for the reflecting boundary functions (see Theorem \ref{BSP} and  Condition (iv) in Assumption \ref{ass1}).
\end{remark}

 Motivated by the construction of solutions to $G$-BSDEs with single mean reflecting boundary investigated in \cite{LiuW} and to BSDEs with double mean reflections in the classical case studied in \cite{Li}, for any fixed $S\in {S}^1_G(0,T)$, we define the following two maps $r,l:[0,T]\times\mathbb{R}\rightarrow\mathbb{R}$ as
\begin{equation}\label{operator}
\begin{split}
r^S(t,x):=\hE[R(t,x+S_t-\hE[S_t])],\  l^S(t,x):=\hE[L(t,x+S_t-\hE[S_t])].
\end{split}
\end{equation}
For simplicity, we always omit the superscript $S$. First, by Proposition 3.3 (i) in \cite{LiuW}, for any $(t,x)\in[0,T]\times\mathbb{R}$ and $S\in S_G^1(0,T)$, we have
$$
R(t,x+S_t-\hE[S_t]),L(t,x+S_t-\hE[S_t])\in L_G^1(\Omega_T).
$$
Consequently, $r(t,x)$, $l(t,x)$ are well-defined. Then, we show that $r,l$ satisfy Assumption \ref{ass1}, which ensures that they can be considered as reflecting boundary functions for some backward Skorokhod problem.

\begin{lemma}\label{proofofass1}
Under Assumption \ref{ass2}, for any $S\in {S}^1_G(0,T)$, $l,r$ defined by \eqref{operator} satisfy Assumption \ref{ass1}.
\end{lemma}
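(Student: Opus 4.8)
The plan is to verify the four conditions of Assumption \ref{ass1} one at a time, exploiting that $\hE$ is monotone, sub-additive and invariant under addition of constants. Throughout I fix $S\in S_G^1(0,T)$ and write $\theta_t:=S_t-\hE[S_t]$, so that $r(t,x)=\hE[R(t,x+\theta_t)]$ and $l(t,x)=\hE[L(t,x+\theta_t)]$. Since $\theta_t$ is, for each fixed $t$, merely a random shift of the spatial argument, the bi-Lipschitz and strict-monotonicity bounds of Assumption \ref{ass2}(3) continue to hold for $x\mapsto R(t,x+\theta_t)$ and $x\mapsto L(t,x+\theta_t)$ pointwise in $\omega$. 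The condition for $l$ is in every case handled by the identical argument used for $r$, so I only describe $r$.

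For conditions (ii) and (iii) I would fix $t$ and take $x>y$. By Assumption \ref{ass2}(3) we have, pointwise in $\omega$, $c(x-y)\le R(t,x+\theta_t)-R(t,y+\theta_t)\le C(x-y)$. Writing the upper estimate as $R(t,x+\theta_t)\le R(t,y+\theta_t)+C(x-y)$ and applying monotonicity together with translation invariance of $\hE$ yields $r(t,x)\le r(t,y)+C(x-y)$; the lower estimate gives $r(t,x)\ge r(t,y)+c(x-y)$ in the same way. Hence $c(x-y)\le r(t,x)-r(t,y)\le C(x-y)$, which is simultaneously the bi-Lipschitz bound (iii) and, since $c>0$, the strict monotonicity (ii). Condition (iv) is equally direct: setting $\delta:=\inf_{(\omega,t,x)\in\Omega_T\times[0,T]\times\mathbb{R}}\big(R(\omega,t,x)-L(\omega,t,x)\big)>0$ from Assumption \ref{ass2}(5), we have $R(t,x+\theta_t)\ge L(t,x+\theta_t)+\delta$ pointwise, so monotonicity and translation invariance give $r(t,x)\ge l(t,x)+\delta$ for every $(t,x)$, whence $\inf_{(t,x)}(r(t,x)-l(t,x))\ge\delta>0$.

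The main work is condition (i), the continuity of $t\mapsto r(t,x)$ for fixed $x$, because the time-dependence enters simultaneously through $R(t,\cdot)$ and through the random shift $\theta_t=S_t-\hE[S_t]$. I would estimate, using sub-additivity in the form $|\hE[X]-\hE[X']|\le\hE[|X-X'|]$,
\[
|r(t,x)-r(t',x)|\le \hE\big[\,|R(t,x+\theta_t)-R(t',x+\theta_{t'})|\,\big],
\]
and split the integrand by inserting $R(t',x+\theta_t)$. The first piece $|R(t,x+\theta_t)-R(t',x+\theta_t)|$ is controlled by the modulus of continuity of $R$ in its time variable from Assumption \ref{ass2}(1), which is deterministic and uniform in $\omega$ and in the spatial argument, hence contributes a bound tending to $0$ as $t'\to t$. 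The second piece is at most $C|\theta_t-\theta_{t'}|\le C\big(|S_t-S_{t'}|+|\hE[S_t]-\hE[S_{t'}]|\big)$ by the Lipschitz bound (3); taking $\hE$ and using $|\hE[S_t]-\hE[S_{t'}]|\le\hE[|S_t-S_{t'}|]$ bounds its contribution by $2C\,\hE[|S_t-S_{t'}|]$.

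The decisive point is then that $S\in S_G^1(0,T)$ forces $\hE[|S_t-S_{t'}|]\to 0$ as $t'\to t$, indeed uniformly in $t$, which is precisely the path-regularity supplied by Proposition \ref{the3.7}. Combining the two pieces shows that $r(\cdot,x)$ is (uniformly) continuous on $[0,T]$, and the same computation applies to $l$. This path-regularity step is the crux of the proof; the remaining three conditions are immediate consequences of the lattice properties of $\hE$ and the structural hypotheses on $L,R$ in Assumption \ref{ass2}.
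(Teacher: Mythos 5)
Your proof is correct and follows essentially the same route as the paper's: the same two-term decomposition for the time-continuity of $r(\cdot,x)$ resting on Proposition \ref{the3.7}, and the same use of monotonicity and sub-additivity of $\hE$ for the bi-Lipschitz bounds and for condition (iv) of Assumption \ref{ass1}. The only (harmless) deviation is that you deduce strict monotonicity directly from the lower bi-Lipschitz bound $r(t,x)-r(t,y)\ge c(x-y)$, whereas the paper gives a separate argument via the representation theorem (Theorem \ref{the1.1}); your shortcut is valid and slightly cleaner.
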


\begin{proof}
We first show that Assumption \ref{ass1} (iv) holds. Indeed, by the sublinear property of $\hE[\cdot]$ and Assumption \ref{ass2} (5), we have
\begin{align*}
r(t,x)-l(t,x)&\geq -\hE[-(R(t,x+S_t-\hE[S_t])-L(t,x+S_t-\hE[S_t]))]\\
&\geq \inf_{\omega,t,x}(R(\omega,t,x)-L(\omega,t,x))>0.
\end{align*}

In the following, we only show that $r$ satisfies Assumption \ref{ass1} (i)-(iii). The proof of continuity property and strict increasing property is similar with the one for Proposition 3.3 in \cite{LiuW}. For readers' convenience, we give a short proof here. By Assumption \ref{ass2} (1), for any $\varepsilon>0$, there exists a constant $\delta$ only depending on $\varepsilon$, such that for any $|t-s|\leq \delta$, we have
$$
|R(t,x)-R(s,x)|\leq \varepsilon.
$$
Simple calculation yields that
\begin{align*}
|r(t,x)-r(s,x)|&\leq\hE[|R(t,x+S_t-\hE[S_t])-R(s,x+S_s-\hE[S_s])|]\\
&\leq \hE[|R(t,x+S_t-\hE[S_t])-R(s,x+S_t-\hE[S_t])|]+2C\hE[|S_t-S_s|].
\end{align*}
By the continuity property of $S$ (see Proposition \ref{the3.7}), $r$ satisfies Assumption \ref{ass1} (i).

Now, given $x>y$, we have $R(t,x+S_t-\hE[S_t])>R(t,y+S_t-\hE[S_t])$, q.s. Noting that $R(t,y+S_t-\hE[S_t])\in L_G^1(\Omega_T)$, the representation theorem of $G$-expectation implies that there exists some $\P\in\mathcal{P}$, such that
\begin{align*}
\hE[R(t,y+S_t-\hE[S_t])]=\E^\P[R(t,y+S_t-\hE[S_t])].
\end{align*}
It follows that
\begin{align*}
r(t,x)&=\hE[R(t,x+S_t-\hE[S_t])]\geq\E^\P[R(t,x+S_t-\hE[S_t])]\\
&>\E^\P[R(t,y+S_t-\hE[S_t])]=\hE[R(t,y+S_t-\hE[S_t])]=r(t,y),
\end{align*}
which implies that $r$ is strictly increasing. It remains to prove the bi-Lipschitz property of $r$.
 For any $x,y\in \mathbb{R}$ with $x<y$, we have
\begin{align*}
&r(t,y)-r(t,x)\leq \hE[R(t,y+S_t-\hE[S_t])-R(t,x+S_t-\hE[S_t])]\leq C(y-x),\\
&r(t,y)-r(t,x)\geq -\hE[-(R(t,y+S_t-\hE[S_t])-R(t,x+S_t-\hE[S_t]))]\geq c(y-x).
\end{align*}
Hence, $r$ satisfies Assumption \ref{ass1} (iii).  The proof is complete.
\end{proof}

\begin{proposition}\label{prop7}
Suppose that $L,R$ satisfy Assumption \ref{ass2}. Given $\xi\in L_G^\beta(\Omega_T)$ and $C,D\in M_G^\beta(0,T)$ with
$$
\hE[L(T,\xi)]\leq 0\leq \hE[R(T,\xi)],
$$
where $\beta>1$, for any $1<\alpha<\beta$, the $G$-BSDE with double mean reflections \eqref{nonlinearnoyz} has a unique solution $(Y,Z,K,A)\in \mathfrak{S}^\alpha(0,T)\times BV[0,T]$, where $\mathfrak{S}^\alpha(0,T)$ is the collection of processes $(Y',Z',K')$ such that $Y'\in S_G^\alpha(0,T)$, $Z'\in H_G^\alpha(0,T)$ and $K'\in S_G^\alpha(0,T)$ is a nonincreasing $G$-martingale.
\end{proposition}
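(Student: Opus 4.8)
The plan is to reduce \eqref{nonlinearnoyz} to a non-reflected $G$-BSDE coupled with a single backward Skorokhod problem, exploiting the fact that the coefficients $C,D$ do not depend on the solution and that $A$ is required to be deterministic; in this situation no fixed-point argument is needed. First I would solve the non-reflected $G$-BSDE
$$\bar{Y}_t=\xi+\int_t^T C_sds+\int_t^T D_sd\langle B\rangle_s-\int_t^T Z_sdB_s-(K_T-K_t),$$
whose well-posedness under $\xi\in L_G^\beta(\Omega_T)$ and $C,D\in M_G^\beta(0,T)$ follows from the $G$-BSDE theory of \cite{HJPS1}: for any $1<\alpha<\beta$ there is a unique triple $(\bar{Y},Z,K)\in\mathfrak{S}^\alpha(0,T)$. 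The key observation that decouples the problem is that any solution of \eqref{nonlinearnoyz} satisfies $Y_t=\bar{Y}_t+(A_T-A_t)$ with $A_T-A_t$ deterministic, so that $Y_t-\hE[Y_t]=\bar{Y}_t-\hE[\bar{Y}_t]$; thus the centred randomness of $Y$ is already carried by $\bar{Y}$, and taking $S:=\bar{Y}$ in \eqref{operator} is legitimate and non-circular.

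With $S:=\bar{Y}$, I set $l:=l^S$, $r:=r^S$, which satisfy Assumption \ref{ass1} by Lemma \ref{proofofass1}. I put $a:=\hE[\xi]$ and $s_t:=\hE[\xi]-\hE[\bar{Y}_t]$, which lies in $C[0,T]$ because $t\mapsto\hE[\bar{Y}_t]$ is continuous (a consequence of $\bar{Y}\in S_G^\alpha(0,T)$ and Proposition \ref{the3.7} together with $|\hE[\bar{Y}_t]-\hE[\bar{Y}_u]|\le\hE[|\bar{Y}_t-\bar{Y}_u|]$). The terminal compatibility $l(T,a)\le 0\le r(T,a)$ is exactly the standing hypothesis $\hE[L(T,\xi)]\le 0\le\hE[R(T,\xi)]$. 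Theorem \ref{BSP} then yields a unique $(x,k)=\mathbb{BSP}_l^r(s,a)$ with $k=k^r-k^l$, and I set $A:=k$, $A^R:=k^r$, $A^L:=k^l$, $Y_t:=\bar{Y}_t+(A_T-A_t)$. Relation (i) of Definition \ref{def} together with the choice of $(s,a)$ gives $x_t=\hE[\bar{Y}_t]+(A_T-A_t)$, whence $x_t+\bar{Y}_t-\hE[\bar{Y}_t]=Y_t$ and therefore $\hE[L(t,Y_t)]=l(t,x_t)$ and $\hE[R(t,Y_t)]=r(t,x_t)$. The mean constraints and the flatness conditions in \eqref{nonlinearnoyz} then follow directly from (ii) and (iii) of Definition \ref{def}, while $Y\in S_G^\alpha(0,T)$ since $A_T-A_\cdot$ is a bounded continuous deterministic function (because $k\in BV[0,T]\subset C[0,T]$).

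For uniqueness, given any solution $(Y,Z,K,A)$ I would set $\bar{Y}_t:=Y_t-(A_T-A_t)$; since $A$ is deterministic, $(\bar{Y},Z,K)$ solves the non-reflected $G$-BSDE displayed above, so $\bar{Y},Z,K$ are uniquely determined by \cite{HJPS1}. Defining $x_t:=\hE[\bar{Y}_t]+(A_T-A_t)$ and using $A_0=0$ (from $A^R,A^L\in I[0,T]$) together with the constraints and flatness conditions of the given solution, one checks that $(x,A)$ solves precisely the backward Skorokhod problem $\mathbb{BSP}_l^r(s,a)$ constructed above. Its uniqueness (Theorem \ref{BSP}), including that of the decomposition $k^r,k^l$ forced by the strict separation in Assumption \ref{ass1}(iv), then pins down $A,A^R,A^L$ and hence $Y$.

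I expect the main obstacle to be the decoupling observation itself — recognizing that the determinism of $A$ yields $Y_t-\hE[Y_t]=\bar{Y}_t-\hE[\bar{Y}_t]$ and makes $S=\bar{Y}$ a valid, self-consistent choice — together with the bookkeeping that matches the mean constraint $\hE[L(t,Y_t)]\le 0\le\hE[R(t,Y_t)]$ to the reflecting inequalities $l(t,x_t)\le 0\le r(t,x_t)$ through the correctly calibrated input $(s,a)$. The remaining verifications are routine consequences of Theorem \ref{BSP} and the standard $G$-BSDE estimates.
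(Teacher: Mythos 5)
Your proposal is correct, and its existence half is exactly the paper's construction: solve the non-reflected $G$-BSDE for $(\bar Y,Z,K)$, feed $S=\bar Y$ into \eqref{operator} to get $l,r$ (Lemma \ref{proofofass1}), take $a=\hE[\xi]$ and an input $s$ with $s_T-s_t=\hE[\bar Y_t]-\hE[\xi]$, solve $\mathbb{BSP}_l^r(s,a)$ by Theorem \ref{BSP}, and set $Y=\bar Y+(A_T-A_\cdot)$; the identification $\hE[L(t,Y_t)]=l(t,x_t)$, $\hE[R(t,Y_t)]=r(t,x_t)$ is the same bookkeeping, and your $s_t=\hE[\xi]-\hE[\bar Y_t]$ differs from the paper's $s_t=\hE[\bar Y_0]-\hE[\bar Y_t]$ only by an additive constant, which is immaterial since Definition \ref{def} only sees the increments $s_T-s_t$. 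Where you genuinely diverge is uniqueness: you observe that $\bar Y$ (hence $l,r,s,a$) is already pinned down by the uniqueness of non-reflected $G$-BSDEs, show that any solution $(Y,Z,K,A)$ produces a pair $(x,A)$ solving that same backward Skorokhod problem, and invoke the uniqueness half of Theorem \ref{BSP}; the paper instead gives a self-contained argument, writing $Y^i_t=X_t+A^i_T-A^i_t$ with $X_t=\hE_t[\xi+\int_t^T C_s\,ds]$ and ruling out $A^1_T-A^1_{t_1}>A^2_T-A^2_{t_1}$ by a first-crossing-time contradiction built from the strict monotonicity of $x\mapsto\hE[L(t,\cdot)]$, $x\mapsto\hE[R(t,\cdot)]$ and the two Skorokhod conditions. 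Your route is shorter and reuses the cited uniqueness cleanly (note only that Theorem \ref{BSP} gives uniqueness of the pair $(x,k)$, which determines $A$ and hence $Y,Z,K$ — that is all the proposition requires; your parenthetical claim that the decomposition $k^r,k^l$ is itself unique is unnecessary and not directly covered by the cited statement), while the paper's version is independent of the uniqueness assertion in Theorem \ref{BSP}, essentially re-proving it in this instance. Both arguments are valid.
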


\begin{proof}
For simplicity, we only prove for the case that $D=0$. We first prove the uniqueness. 
Suppose that $(Y^i,Z^i,K^i,A^i)$ are solutions to \eqref{nonlinearnoyz}, $i=1,2$. Set $X_t=\hE_t[\xi+\int_t^T C_sds]$. Noting that $A^i$ is deterministic and
$$
Y_t^i+K^i_T-K^i_t=\xi+\int_t^T C_s ds-\int_t^T Z^i_s dB_s+A^i_T-A^i_t,
$$
taking conditional expectations on both sides yields that $Y_t^i=X_t+A^i_T-A^i_t$. Suppose that there exists $t_1<T$ such that
\begin{displaymath}
A^1_T-A^1_{t_1}>A^2_T-A^2_{t_1}.
\end{displaymath}
Set
\begin{displaymath}
t_2=\inf\{t\geq t_1: A_T^1-A^1_t=A_T^2-A^2_t\}.
\end{displaymath}
The definition of $t_2$ implies that
\begin{displaymath}
A_T^1-A^1_t> A^2_T-A^2_t, \ t_1\leq t<t_2.
\end{displaymath}
By a similar analysis as the strictly increasing property for $r$ in the proof of Lemma \ref{proofofass1}, we have for any $t_1\leq t<t_2$
\begin{align*}
&\hE[R(t,X_t+A_T^1-A_t^1)]>\hE[R(t,X_t+A_T^2-A_t^2)]\geq 0, \\
&\hE[L(t,X_t+A_T^2-A_t^2)]<\hE[L(t,X_t+A_T^1-A_t^1)]\leq 0.
\end{align*}
Applying the Skorokhod condition
\begin{displaymath}
\int_{t_1}^{t_2}\hE[R(t,X_t+A_T^1-A_t^1)]dA^{1,R}_t=\int_{t_1}^{t_2}\hE[L(t,X_t+A_T^2-A_t^2)]dA^{2,L}_t=0,
\end{displaymath}
we obtain that $dA^{1,R}_t=dA^{2,L}_t=0$ on the interval $[t_1,t_2]$. Recalling that $A^{1,L}$ and $A^{2,R}$ are nondecreasing, it is easy to check that
\begin{align*}
A_T^1-A_{t_2}^1=&A^1_T-(A^{1,R}_{t_2}-A^{1,L}_{t_2})=A^{1}_T-A^{1,R}_{t_1}+A^{1,L}_{t_2}\geq A^1_T-A^{1,R}_{t_1}+A^{1,L}_{t_1}\\
=&A_T^1-A^1_{t_1}>A^2_T-A^2_{t_1}=A_T^2-(A^{2,R}_{t_1}-A^{2,L}_{t_1})\\
=&A^2_T+A^{2,L}_{t_2}-A^{2,R}_{t_1}\geq A^2_T+A^{2,L}_{t_2}-A^{2,R}_{t_2}= A^2_T-A^2_{t_2},
\end{align*}
which contradicts the definition of $t_2$. Therefore, we have $A^1\equiv A^2=:A$. Then, $(Y^i,Z^i,K^i)$ can be regarded as the solution to the following $G$-BSDE, $i=1,2$
$$
Y^i_t=\xi+\int_t^T C_s ds-\int_t^TZ^i_sd B_s-(K_T^i-K^i_t)+(A_T-A_t).
$$
By the uniqueness of solutions to $G$-BSDEs, we have $Y^1\equiv Y^2$, $Z^1\equiv Z^2$ and $K^1\equiv K^2$.

Now we prove the existence. For any fixed $\alpha\in(1,\beta)$, let $(\widetilde{Y},Z,K)\in \mathfrak{S}^\alpha(0,T)$ be the solution to the $G$-BSDE with terminal value $\xi$ and constant coefficient $C$, i.e.,
\begin{align*}
\widetilde{Y}_t=\xi+\int_t^T C_s ds+\int_t^T Z_s dB_s-(K_T-K_t).
\end{align*}
  For any $t\in[0,T]$,  set
\begin{align*}
s_t=\hE[\xi+\int_0^T C_s ds]-\hE[\xi+\int_t^T C_sds], \ a=\hE[\xi].
\end{align*}
It is easy to check that $s\in C[0,T]$ and $s_t=\hE[\widetilde{Y}_0]-\hE[\widetilde{Y}_t]$.
 For any $(t,x)\in[0,T]\times \mathbb{R}$, we define
\begin{align*}
l(t,x):=\hE[L(t,\widetilde{Y}_t-\hE[\widetilde{Y}_t]+x)], \ r(t,x):=\hE[R(t,\widetilde{Y}_t-\hE[\widetilde{Y}_t]+x)].
\end{align*}
By Lemma \ref{proofofass1}, $l,r$ satisfy Assumption \ref{ass1} and
\begin{align*}
l(T,a)=\hE[L(T,\xi)]\leq 0\leq \hE[R(T,\xi)]=r(T,a).
\end{align*}
Let $(x,A)$ be the unique solution to the backward Skorokhod problem $\mathbb{BSP}_l^r(s,a)$. Now, we set
\begin{align*}
Y_t=\widetilde{Y}_t+(A_T-A_t).
\end{align*}
We claim that $(Y,Z,K,A)$ is the solution to \eqref{nonlinearnoyz}. It suffices to prove the Skorokhod conditions hold. In fact, simple calculation yields that
\begin{align*}
\hE[L(t,Y_t)]=&\hE[L(t,\widetilde{Y}_t+A_T-A_t)]\\
=&\hE[L(t,\widetilde{Y}_t+x_t-a-(s_T-s_t))]\\
=&\hE[L(t,\widetilde{Y}_t-\hE[\widetilde{Y}_t]+x_t)]=l(t,x_t).
\end{align*}
Similarly, we have $\hE[R(t,Y_t)]=r(t,x_t)$. Recalling that $(x,A)=\mathbb{BSP}_l^r(s,a)$, it follows that
\begin{align*}
&\hE[L(t,Y_t)]=l(t,x_t)\leq 0\leq r(t,x_t)=\hE[R(t,Y_t)],\\
&\int_0^T \hE[L(t,Y_t)]dA^L_t=\int_0^T l(t,x_t)dA^L_t=0,\\
&\int_0^T \hE[R(t,Y_t)]dA^L_t=\int_0^T r(t,x_t)dA^R_t=0.
\end{align*}
The proof is complete.
\end{proof}

 We establish the following the estimate for the bounded variation component of solutions to $G$-BSDEs with double mean reflections.
\begin{proposition}\label{proposition}
Suppose that $L,R$ satisfy Assumption \ref{ass2}. Given $\xi^i\in L_G^\beta(\Omega_T)$ and $C^i,D^i\in M_G^\beta(0,T)$ with
$$
\hE[L(T,\xi^i)]\leq 0\leq \hE[R(T,\xi^i)],
$$
where $\beta>1$, $i=1,2$, let $(Y^i,Z^i,K^i,A^i)$ be the solution to doubly mean reflected $G$-BSDE with terminal value $\xi^i$, loss functions $L,R$, constant coefficients $C^i,D^i$. Then, there exists a constant $ \widetilde{C}(c,C,G)$ such that
\begin{align*}
\sup_{t\in[0,T]}|A^1_t-A^2_t|\leq \widetilde{C}(c,C,G)(\hE[|\xi^1-\xi^2|]+\hE[\int_0^T|C^1_s-C^2_s|ds]+\hE[\int_0^T|D^1_s-D^2_s|ds]).
\end{align*}
\end{proposition}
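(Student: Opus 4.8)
The plan is to reduce the whole estimate to the continuous-dependence bound for the backward Skorokhod problem, Theorem \ref{BSP'}, by exploiting the explicit construction of the bounded-variation component carried out in the proof of Proposition \ref{prop7}. Recall that for each $i=1,2$ one first solves the $G$-BSDE with terminal value $\xi^i$ and state-independent coefficients $C^i,D^i$ to obtain $\widetilde{Y}^i$, then sets $a^i=\hE[\xi^i]$, $s^i_t=\hE[\widetilde{Y}^i_0]-\hE[\widetilde{Y}^i_t]$, builds the reflecting boundary functions $l^i(t,x)=\hE[L(t,\widetilde{Y}^i_t-\hE[\widetilde{Y}^i_t]+x)]$ and $r^i(t,x)=\hE[R(t,\widetilde{Y}^i_t-\hE[\widetilde{Y}^i_t]+x)]$, and finally defines $A^i$ as the bounded-variation part $k^i$ of the solution to $\mathbb{BSP}_{l^i}^{r^i}(s^i,a^i)$. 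By Lemma \ref{proofofass1} the functions $l^i,r^i$ satisfy Assumption \ref{ass1} with the same constants $c,C$ as $L,R$, so \eqref{diffK} yields directly
$$
\sup_{t\in[0,T]}|A^1_t-A^2_t|\leq \frac{2C}{c}|a^1-a^2|+\frac{4C}{c}\sup_{t\in[0,T]}|s^1_t-s^2_t|+\frac{2}{c}(\bar{L}_T\vee\bar{R}_T),
$$
and it remains to bound the three ingredients on the right.

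The central tool is the representation $\widetilde{Y}^i_t=\hE_t[\xi^i+\int_t^T C^i_s\,ds+\int_t^T D^i_s\,d\langle B\rangle_s]$, valid because the $G$-It\^o integral and the non-increasing $G$-martingale $K^i$ have vanishing conditional $G$-expectation. Combining $|\hE_t[U]-\hE_t[V]|\leq \hE_t[|U-V|]$ with the tower property, sublinearity, and the bound $d\langle B\rangle_s\leq \bar{\sigma}^2\,ds$, I obtain a bound that is uniform in $t$,
$$
\hE[|\widetilde{Y}^1_t-\widetilde{Y}^2_t|]\leq \hE[|\xi^1-\xi^2|]+\hE\Big[\int_0^T|C^1_s-C^2_s|\,ds\Big]+\bar{\sigma}^2\hE\Big[\int_0^T|D^1_s-D^2_s|\,ds\Big]=:\mathcal{I}.
$$
Since $a^i=\hE[\xi^i]$ and, writing $X^i_t:=\hE[\xi^i+\int_t^T C^i_s\,ds+\int_t^T D^i_s\,d\langle B\rangle_s]=\hE[\widetilde{Y}^i_t]$, one has $s^i_t=X^i_0-X^i_t$, the same computation gives $|a^1-a^2|\leq\hE[|\xi^1-\xi^2|]\leq \mathcal{I}$ and $\sup_t|s^1_t-s^2_t|\leq 2\sup_t|X^1_t-X^2_t|\leq 2\mathcal{I}$.

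For the boundary terms the crucial observation is that the running variable $x$ enters $l^1$ and $l^2$ in exactly the same way and therefore cancels under the Lipschitz estimate for $L$: for every $(t,x)$,
\begin{align*}
|l^1(t,x)-l^2(t,x)|
&\leq \hE\big[|L(t,\widetilde{Y}^1_t-\hE[\widetilde{Y}^1_t]+x)-L(t,\widetilde{Y}^2_t-\hE[\widetilde{Y}^2_t]+x)|\big]\\
&\leq C\big(\hE[|\widetilde{Y}^1_t-\widetilde{Y}^2_t|]+|\hE[\widetilde{Y}^1_t]-\hE[\widetilde{Y}^2_t]|\big)\leq 2C\,\mathcal{I},
\end{align*}
so $\bar{L}_T\leq 2C\mathcal{I}$, and likewise $\bar{R}_T\leq 2C\mathcal{I}$ from the Lipschitz property of $R$. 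Substituting these bounds into the inequality from Theorem \ref{BSP'} gives $\sup_t|A^1_t-A^2_t|\leq \widetilde{C}(c,C,\bar{\sigma}^2)\,\mathcal{I}$; since $\bar{\sigma}^2$ is determined by $G$, the constant is of the form $\widetilde{C}(c,C,G)$ and $\mathcal{I}$ is the right-hand side of the claim up to the factor $\bar{\sigma}^2$ absorbed in the constant, which is the assertion.

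I expect the main obstacle to be precisely this boundary-function step: the quantities $\bar{L}_T,\bar{R}_T$ are suprema over all $(t,x)$, and one must control them by a single quantity independent of $x$. The resolution is the cancellation above, which reduces everything to the uniform-in-$t$ bound $\mathcal{I}$ on $\hE[|\widetilde{Y}^1_t-\widetilde{Y}^2_t|]$; note that thanks to the conditional-expectation representation of $\widetilde{Y}^i$ this bound follows directly from the properties of $\hE_t[\cdot]$ and does not require the heavier a priori estimates for $G$-BSDEs. A secondary technical point is the correct handling of the $d\langle B\rangle$ integral via $d\langle B\rangle_s\leq\bar{\sigma}^2\,ds$, which is exactly what forces the final constant to depend on $G$.
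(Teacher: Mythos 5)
Your proposal is correct and follows essentially the same route as the paper: it invokes the construction of $A^i$ from the proof of Proposition \ref{prop7}, applies the stability estimate \eqref{diffK} of Theorem \ref{BSP'}, and then controls $|a^1-a^2|$, $\sup_t|s^1_t-s^2_t|$ and the boundary-function differences via the conditional-expectation representation of $\widetilde{Y}^i$ together with the Lipschitz continuity of $L,R$ (the cancellation of $x$ and the bound $d\langle B\rangle_s\leq\bar{\sigma}^2\,ds$ being exactly the details the paper leaves implicit). The only cosmetic caveat is that the "vanishing conditional expectation" justification should really be phrased as the standard representation of the solution of a $G$-BSDE with state-independent generator, since $\hat{\mathbb{E}}_t$ is only sublinear; the conclusion you use is nonetheless the same one the paper uses in the uniqueness part of Proposition \ref{prop7}.
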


\begin{proof}
By the proof of Proposition \ref{prop7} and the estimate for solutions to backward Skorokhod problem (see Theorem \ref{BSP'}), we have
\begin{align*}
\sup_{t\in[0,T]}|A^1_t-A^2_t|\leq 2\frac{C}{c}|a^1-a^2|+4\frac{C}{c}\sup_{t\in[0,T]}|s^1_t-s^2_t|+2\frac{C}{c}\sup_{(t,x)\in[0,T]\times \mathbb{R}}(|\hat{L}(t,x)|\vee|\hat{R}(t,x)|),
\end{align*}
where for $i=1,2$, $a^i=\hE[\xi^i]$ and
\begin{align*}
&s^i_t=\hE[\xi^i+\int_0^T C^i_sds+\int_0^T D^i_sd\langle B\rangle_s]-\hE[\xi^i+\int_t^T C^i_sds+\int_t^T D^i_sd\langle B\rangle_s],\\
&\hat{L}(t,x):=\hE[L(t,\widetilde{Y}^1_t-\hE[\widetilde{Y}^1_t]+x)]-\hE[L(t,\widetilde{Y}^2_t-\hE[\widetilde{Y}^2_t]+x)],\\
&\hat{R}(t,x):=\hE[R(t,\widetilde{Y}^1_t-\hE[\widetilde{Y}^1_t]+x)]-\hE[R(t,\widetilde{Y}^2_t-\hE[\widetilde{Y}^2_t]+x)],\\
&\widetilde{Y}^i_t=\xi^i+\int_t^T C^i_sds+\int_t^T D^i_sd\langle B\rangle_s+\int_t^T Z^i_sdB_s-(K^i_T-K^i_t).
\end{align*}
Applying the Lipschitz continuity of $L,R$, we obtain the desired result. 
\end{proof}

\subsection{The case of special nonlinear coefficients}

In this subsection, we consider the case where the coefficient $f$ is deterministically linearly dependent on $y$ and $g$ does not depend on $y$, which coincides with Case I in \cite{LiuW}. More precisely, the $G$-BSDE with double mean reflections takes the following form
\begin{equation}\label{nonlinearyz11}
\begin{cases}
Y_t=\xi+\int_t^T f(s,Y_s,Z_s)ds+\int_t^T g(s,Z_s)d\langle B\rangle_s-\int_t^T Z_s dB_s-(K_T-K_t)+(A_T-A_t), \\
\hE[L(t,Y_t)]\leq 0\leq \hE[R(t,Y_t)], \\
A_t=A^R_t-A^L_t, \ A^R,A^L\in I[0,T] \textrm{ and } \int_0^T \hE[R(t,Y_t)]dA_t^R=\int_0^T \hE[L(t,Y_t)]dA^L_t=0,
\end{cases}
\end{equation}
where $f,g$ satisfy the following assumptions.

\begin{assumption}\label{assfg1}
\begin{itemize}
\item[1.] $f:[0,T]\times \Omega_T\times\mathbb{R}\times\mathbb{R}\rightarrow \mathbb{R}$ is of the following form
\begin{align*}
f(t,y,z)=\gamma_t y+f'(t,z),
\end{align*}
\end{itemize}
where $\gamma$ is a deterministic and bounded Borel measurable function;
\item[2.] for each $z$, $f'(\cdot,z),g(\cdot,z)\in M_G^\beta(0,T)$ for some $\beta>1$;
\item[3.] there exists some constant $\kappa>0$, such that for any $t\in[0,T]$ and $z,z'\in\mathbb{R}$,
\begin{align*}
|f'(t,z)-f'(t,z')|+|g(t,z)-g(t,z')|\leq \kappa|z-z'|.
\end{align*}
\end{assumption}

\begin{theorem}\label{thm3.8}
Suppose that Assumption \ref{ass2} and \ref{assfg1} hold. Let $\xi\in L_G^\beta(\Omega_T)$ be such that
$\hE[L(T,\xi)]\leq 0\leq \hE[R(t,\xi)]$. Then, for each $1< \alpha<\beta$, the $G$-BSDE with double mean reflections \eqref{nonlinearyz11} has a unique solution $(Y,Z,K,A)\in \mathfrak{S}^\alpha_G\times BV[0,T]$. Moreover, we have $Y\in M_{G}^{\beta}\left(0, T \right)$. 
\end{theorem}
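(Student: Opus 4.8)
The plan is to exploit the special structure of Assumption \ref{assfg1}---that $f$ depends on $y$ only through the deterministic linear term $\gamma_ty$ and $g$ does not depend on $y$ at all---to eliminate the $y$-dependence of the generator entirely via a deterministic integrating factor, reducing \eqref{nonlinearyz11} to a doubly mean reflected $G$-BSDE whose generator involves only $z$. For such an equation the construction can be carried out in one shot, since the deterministic reflecting term does not feed back into the generator; this is precisely the feature that makes the present case tractable and the general Lipschitz-in-$y$ case (Theorem \ref{thm3.10}) harder.

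First I would set $\Gamma_t=\exp(\int_0^t\gamma_s\,ds)$, which is deterministic, continuous and, since $\gamma$ is bounded, bounded away from $0$ and $\infty$. Writing $\bar Y_t=\Gamma_tY_t$, $\bar Z_t=\Gamma_tZ_t$, $\bar K_t=\int_0^t\Gamma_s\,dK_s$ and $\bar A_t=\int_0^t\Gamma_s\,dA_s$, an integration by parts (using that $\Gamma$ is deterministic) cancels the $\gamma_tY_t$ term and shows that $(Y,Z,K,A)$ solves \eqref{nonlinearyz11} if and only if $(\bar Y,\bar Z,\bar K,\bar A)$ solves the doubly mean reflected $G$-BSDE with terminal value $\bar\xi=\Gamma_T\xi$, generators $\bar f(s,z)=\Gamma_sf'(s,z/\Gamma_s)$, $\bar g(s,z)=\Gamma_sg(s,z/\Gamma_s)$, and loss functions $\bar L(\omega,s,y)=L(\omega,s,y/\Gamma_s)$, $\bar R(\omega,s,y)=R(\omega,s,y/\Gamma_s)$. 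Several routine facts must be verified: that $\bar K$ remains a non-increasing $G$-martingale (this uses $\Gamma>0$, so the Riemann-sum increments $\Gamma_{t_i}(K_{t_{i+1}}-K_{t_i})$ have vanishing conditional $G$-expectation by positive homogeneity); that $\bar A=\bar A^R-\bar A^L$ with $\bar A^R,\bar A^L\in I[0,T]$ and that the Skorokhod conditions are preserved (again using $\Gamma>0$); and that $\bar L,\bar R$ inherit Assumption \ref{ass2} with adjusted constants because $\Gamma$ is bi-bounded. Note also $\bar L(T,\bar\xi)=L(T,\xi)\le0\le R(T,\xi)=\bar R(T,\bar\xi)$, and that $\bar f(\cdot,z),\bar g(\cdot,z)$ stay $\kappa$-Lipschitz in $z$ and lie in $M_G^\beta(0,T)$ for each fixed $z$.

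For the transformed, $y$-independent equation I would construct the solution directly. Since $\bar f,\bar g$ are Lipschitz in $z$ with no $y$-dependence, the standard $G$-BSDE
\begin{equation*}
\hat Y_t=\bar\xi+\int_t^T\bar f(s,\hat Z_s)\,ds+\int_t^T\bar g(s,\hat Z_s)\,d\langle B\rangle_s-\int_t^T\hat Z_s\,dB_s-(\hat K_T-\hat K_t)
\end{equation*}
has a unique solution $(\hat Y,\hat Z,\hat K)$ by the $G$-BSDE theory of \cite{HJPS1}. Setting $C_s:=\bar f(s,\hat Z_s)$ and $D_s:=\bar g(s,\hat Z_s)$, which are fixed processes once $\hat Z$ is known, I would invoke Proposition \ref{prop7} with these coefficients: its construction produces a deterministic $\bar A=\bar A^R-\bar A^L$ solving the backward Skorokhod problem of Theorem \ref{BSP} for boundaries $\bar l,\bar r$ built from $\bar L,\bar R$ as in \eqref{operator}, and the quadruple $(\bar Y,\hat Z,\hat K,\bar A)$ with $\bar Y_t=\hat Y_t+(\bar A_T-\bar A_t)$ solves the transformed equation. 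The crucial point is that, because the generator does not depend on $\bar Y$, the $G$-BSDE with coefficients $C,D$ appearing inside Proposition \ref{prop7} coincides with the one displayed above, so its $(Z,K)$ is literally $(\hat Z,\hat K)$ and no fixed-point iteration is needed. Undoing the integrating factor yields the solution of \eqref{nonlinearyz11}. Uniqueness follows the same route: for any solution, $\hat Y_t:=\bar Y_t-(\bar A_T-\bar A_t)$ is forced (by rearranging and using that $\bar A$ is deterministic) to solve the standard $G$-BSDE above, whose solution is unique, after which $\bar A$ is forced to solve the same backward Skorokhod problem and is unique by Theorem \ref{BSP}, exactly as in the uniqueness part of Proposition \ref{prop7}.

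I expect the main obstacle to be integrability bookkeeping rather than structure. The $G$-BSDE theory only guarantees $\hat Z\in H_G^{\alpha}(0,T)$ for $\alpha<\beta$, whereas Proposition \ref{prop7} requires $C=\bar f(\cdot,\hat Z)$, $D=\bar g(\cdot,\hat Z)$ to lie in $M_G^{\beta'}(0,T)$ for some $\beta'>1$; one must therefore track which exponents survive the embedding $H_G^{\alpha}\subset M_G^{\min(\alpha,2)}$ together with the Lipschitz bound $|\bar f(s,\hat Z_s)|\le|\bar f(s,0)|+\kappa|\hat Z_s|$, and confirm that Proposition \ref{prop7} then delivers a solution in $\mathfrak{S}^\alpha(0,T)\times BV[0,T]$ for every $\alpha<\beta$. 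The final assertion $Y\in M_G^\beta(0,T)$ I would recover from $Y_t=\Gamma_t^{-1}\big(\hat Y_t+(\bar A_T-\bar A_t)\big)$, the boundedness of $\Gamma^{-1}$ and of the deterministic $\bar A$, which reduces it to $\hat Y\in M_G^\beta(0,T)$; this last fact follows from the a priori estimates for $G$-BSDEs in \cite{HJPS1} and the Doob-type inequality of Theorem \ref{myw204}.
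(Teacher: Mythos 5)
Your proposal is correct and follows essentially the same route as the paper's proof: the deterministic integrating factor eliminates the $\gamma_t y$ term, the standard $G$-BSDE of \cite{HJPS1} is solved first and its $Z$-component frozen so that Proposition \ref{prop7} applies with constant coefficients, the resulting quadruple is identified with the non-reflected solution via uniqueness of $G$-BSDEs (which is exactly how the paper resolves the exponent bookkeeping you flag, upgrading from the restricted range $\alpha''<\alpha'\leq 2\wedge\alpha$ forced by $\bar Z\in H_G^{\alpha'}\subset M_G^{\alpha'}$ back to the full range $1<\alpha<\beta$), and uniqueness reduces to Proposition \ref{prop7} after observing that $Z^1\equiv Z^2$ since $f$ no longer depends on $y$. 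The only cosmetic deviation is the final assertion $\hat{Y}\in M_G^\beta(0,T)$, which the paper obtains by citing Lemma 3.2 of \cite{L} (see Remark \ref{mywA2}) rather than via Theorem \ref{myw204}, which on its own only controls exponents strictly below $\beta$.
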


\begin{proof}
For simplicity, we only consider the case that $g=0$. The proof is analogous to the one for Theorem 3.8 in \cite{LiuW}. For readers' convenience, we give a short proof here. Set $a_t:=\int_0^t \gamma_s ds$ for each $t\in[0,T]$ and define
\begin{align*}
Y^a_t:=e^{a_t}Y_t,\ Z^a_t:=e^{a_t}Z_t, \ K^a_t:=\int_0^t e^{a_s}dK_s, \ A^a_t:=\int_0^t e^{a_s}dA_s.
\end{align*}
It is easy to check that $(Y,Z,K,A)$ is the solution to \eqref{nonlinearyz11} if and only if $(Y^a,Z^a,K^a,A^a)$ is the solution to doubly mean reflected $G$-BSDE with terminal value $\xi^a$, loss functions $L^a,R^a$ and coefficient $f^a$, where
\begin{align*}
\xi^a:=e^{a_T}\xi, \ L^a(t,x):=L(t,e^{-a_t}x), \ R^a(t,x)=:R(t,e^{-a_t}x), \ f^a(t,z):=e^{a_t}f'(t,e^{-a_t}z).
\end{align*}
Therefore, without loss of generality, we assume that the coefficient $f$ does not depend on $y$.

We first prove existence. Consider the $G$-BSDE with terminal value $\xi$, coefficient $f$ on time interval $[0,T]$. By Theorem \ref{mywA1}, for any $1<\alpha<\beta$, this $G$-BSDE admits a unique solution $(\bar{Y},\bar{Z},\bar{K})\in \mathfrak{S}_G^\alpha(0,T)$. Then, for any $1<\alpha'\leq 2\wedge \alpha$, we have $\bar{Z}\in H^{\alpha'}_G(0,T)\subset M_G^{\alpha'}(0,T)$. Applying Assumption \ref{assfg1} and Theorem 4.7 in \cite{HWZ} imply that $f(\cdot,\bar{Z}_\cdot)\in M_G^{\alpha'}(0,T)$. Now, let us consider the following $G$-BSDE with double mean reflections
\begin{displaymath}
\begin{cases}
\widetilde{Y}_t=\xi+\int_t^T f(s,\bar{Z}_s)ds-\int_t^T \widetilde{Z}_s dB_s-(\widetilde{K}_T-\widetilde{K}_t)+(\widetilde{A}_T-\widetilde{A}_t), \\
\hE[L(t,\widetilde{Y}_t)]\leq 0\leq \hE[R(t,\widetilde{Y}_t)], \\
\widetilde{A}_t=\widetilde{A}^R_t-\widetilde{A}^L_t, \  \widetilde{A}^R,\widetilde{A}^L\in I[0,T] \textrm{ and } \int_0^T \hE[R(t,\widetilde{Y}_t)]d\widetilde{A}_t^R=\int_0^T \hE[L(t,\widetilde{Y}_t)]d\widetilde{A}^L_t=0.
\end{cases}
\end{displaymath}
By Proposition \ref{prop7}, for any $1<\alpha''<\alpha'$, the above doubly mean reflected $G$-BSDE has a unique solution $(\widetilde{Y},\widetilde{Z},\widetilde{K},\widetilde{A})\in \mathfrak{S}^{\alpha''}_G(0,T)\times BV[0,T]$. On the other hand, both $(\bar{Y},\bar{Z},\bar{K})$ and $(\widetilde{Y}-(\widetilde{A}_T-\widetilde{A}),\widetilde{Z},\widetilde{K})$ can be seen as the solution to the following $G$-BSDE
\begin{align*}
\hat{Y}_t=\xi+\int_t^T f(s,\bar{Z}_s)ds-\int_t^T\hat{Z}_s dB_s-(\hat{K}_T-\hat{K}_t).
\end{align*}
By the uniqueness result of $G$-BSDEs, we deduce that
\begin{align*}
(\bar{Y}_t,\bar{Z}_t,\bar{K}_t)=(\widetilde{Y}_t-(\widetilde{A}_T-\widetilde{A}_t),\widetilde{Z}_t,\widetilde{K}_t), \ t\in[0,T].
\end{align*}
Therefore, $(\widetilde{Y},\widetilde{Z},\widetilde{K},\widetilde{A})\in \mathfrak{S}^{\alpha}_G(0,T)\times BV[0,T]$ is a solution to doubly mean reflected $G$-BSDE \eqref{nonlinearyz11}. For the last assertion,  as verified in Lemma 3.2 of \cite{L} (see also Remark \ref{mywA2} in the appendix), $\bar{Y}\in M_{G}^{\beta}\left(0, T \right)$, which together with $\widetilde{A}\in BV[0,T]$ implies $\widetilde{Y}\in M_{G}^{\beta}\left(0, T \right)$.

It remains to prove uniqueness. Let $(Y^i,Z^i,K^i,A^i)$, $i=1,2$ be the solution to doubly mean reflected $G$-BSDE with terminal value $\xi$, loss functions $L,R$ and coefficient $f$. Noting that $f$ is independent of $y$, $(Y^i-(A_T^i-A^i),Z^i,K^i)$, $i=1,2$, can be seen as the solution to $G$-BSDE with terminal value $\xi$ and coefficient $f$. Therefore, we have $Z^1\equiv Z^2=:Z$ and consequently, $(Y^i,Z^i,K^i,A^i)$, $i=1,2$ coincide with the solution to doubly mean reflected $G$-BSDE terminal value $\xi$, loss functions $L,R$ and constant coefficient $\{C_s\}=\{f(s, Z_s)\}$. By Proposition \ref{prop7}, we obtain the desired result.
\end{proof}

\subsection{The case of general Lipschitz coefficients}

In this subsection, we consider the doubly mean reflected $G$-BSDE with coefficients $f,g$ taking the following form
\begin{equation}\label{nonlinearyz1}
\begin{cases}
Y_t=\xi+\int_t^T f(s,Y_s,Z_s)ds+\int_t^T g(s,Y_s,Z_s)d\langle B\rangle_s-\int_t^T Z_s dB_s-(K_T-K_t)+(A_T-A_t), \\
\hE[L(t,Y_t)]\leq 0\leq \hE[R(t,Y_t)], \\
A_t=A^R_t-A^L_t, \ A^R,A^L\in I[0,T] \textrm{ and } \int_0^T \hE[R(t,Y_t)]dA_t^R=\int_0^T \hE[L(t,Y_t)]dA^L_t=0.
\end{cases}
\end{equation}
We assume that the coefficients $f,g$ satisfy the following assumptions.
\begin{assumption}\label{assfg}
\begin{itemize}
\item[(i)] for each $y,z$, $f(\cdot,y,z),g(\cdot,y,z)\in M_G^\beta(0,T)$ for some $\beta>1$;
\item[(ii)] there exists some constant $\kappa>0$, such that for any $t\in[0,T]$ and $y,y',z,z'\in\mathbb{R}$,
\begin{align*}
|f(t,y,z)-f(t,y',z')|+|g(t,y,z)-g(t,y',z')|\leq \kappa(|y-y'|+|z-z'|).
\end{align*}
\end{itemize}
\end{assumption}
\begin{theorem}\label{thm3.10}
Suppose that Assumption \ref{ass2} and \ref{assfg} hold. Let $\xi\in L_G^\beta(\Omega_T)$ be such that
$\hE[L(T,\xi)]\leq 0\leq \hE[R(t,\xi)]$. Then, for each $1< \alpha<\beta$, the $G$-BSDE with double mean reflections \eqref{nonlinearyz1} has a unique solution $(Y,Z,K,A)\in \mathfrak{S}^\alpha_G\times BV[0,T]$.
\end{theorem}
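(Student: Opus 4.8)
The plan is to solve \eqref{nonlinearyz1} by a fixed-point argument carried out on the $Y$-component alone, reducing at each step to the special case already settled in Theorem \ref{thm3.8}, and then to pass from a local solution to the global one by a backward patching procedure. The starting observation is that if we freeze the $y$-slot of the generators by inserting a given process $U$, i.e. if we set $f^U(s,z):=f(s,U_s,z)$ and $g^U(s,z):=g(s,U_s,z)$, then $f^U,g^U$ depend only on $z$ and inherit the $z$-Lipschitz constant $\kappa$ and the $M_G^\beta$-integrability (using Assumption \ref{assfg} together with $U\in M_G^\beta$). Hence, by Theorem \ref{thm3.8} (with the linear coefficient $\gamma\equiv 0$), the doubly mean reflected $G$-BSDE with generators $f^U,g^U$, terminal value $\xi$ and loss functions $L,R$ admits a unique solution $(Y,Z,K,A)$, whose $Y$-component again lies in $M_G^\beta$. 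This defines a map $\Gamma:U\mapsto Y$, and a fixed point of $\Gamma$ is precisely the $Y$-component of a solution to \eqref{nonlinearyz1}.

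To obtain local well-posedness I would show that on a short interval $[t_0,T]$ the map $\Gamma$ is a contraction in the $S_G^\alpha(t_0,T)$-norm. Given $U^1,U^2$, write the two solutions as $Y^i_t=\widetilde Y^i_t+(A^i_T-A^i_t)$, where $\widetilde Y^i$ solves the non-reflected $G$-BSDE with generators $f^{U^i},g^{U^i}$ and terminal $\xi$, exactly as in the proof of Proposition \ref{prop7}. When differencing, the two drivers are evaluated at a common $z$, so the generator discrepancy is $f(s,U^1_s,z)-f(s,U^2_s,z)$, bounded by $\kappa|U^1_s-U^2_s|$, and likewise for $g$; crucially this does not reintroduce $|Z^1-Z^2|$. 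The a priori estimates for $G$-BSDEs from \cite{HJPS1} then control both $\|\widetilde Y^1-\widetilde Y^2\|_{S_G^\alpha(t_0,T)}$ and $\|Z^1-Z^2\|_{H_G^\alpha(t_0,T)}$ by a constant times an interval-shrinking quantity of the form $\hE[(\int_{t_0}^T|U^1_s-U^2_s|\,ds)^{\alpha}]^{1/\alpha}$. Feeding the resulting control of $Z^1-Z^2$ and $U^1-U^2$ into Proposition \ref{proposition}, applied with constant coefficients $C^i_s=f(s,U^i_s,Z^i_s)$, $D^i_s=g(s,U^i_s,Z^i_s)$ and identical terminal values, bounds $\sup_t|A^1_t-A^2_t|$ by the same kind of quantity. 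Combining yields $\|\Gamma(U^1)-\Gamma(U^2)\|_{S_G^\alpha(t_0,T)}\le \Theta(T-t_0)\,\|U^1-U^2\|_{S_G^\alpha(t_0,T)}$ with $\Theta(T-t_0)\to 0$ as the interval shrinks; choosing $T-t_0=\delta$ small makes $\Gamma$ a contraction (the $M_G^\beta$-property being preserved along the iteration), and the Banach fixed-point theorem gives a unique local solution.

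For the global result I would fix a partition $0=t_0<t_1<\cdots<t_N=T$ with mesh below $\delta$ and solve backward. On $[t_{N-1},T]$ the above gives a solution with terminal $\xi$; its value $Y_{t_{N-1}}$ satisfies $\hE[L(t_{N-1},Y_{t_{N-1}})]\le0\le\hE[R(t_{N-1},Y_{t_{N-1}})]$ because the mean-reflection constraint holds at every time, so $Y_{t_{N-1}}$ is an admissible terminal condition for $[t_{N-2},t_{N-1}]$, and so on. Concatenating the local pieces produces $(Y,Z,K,A)$ on $[0,T]$: $Y$ is continuous by matching endpoints, $K$ is a nonincreasing $G$-martingale on $[0,T]$ (gluing of $G$-martingales across adjacent intervals, as in the non-reflected theory), and $A=A^R-A^L$ with $A^R,A^L$ nondecreasing and satisfying the Skorokhod conditions globally, since these integral conditions split additively over the subintervals. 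Global uniqueness follows from local uniqueness by backward induction on the partition: two global solutions must coincide on $[t_{N-1},T]$, hence share the value $Y_{t_{N-1}}$, hence coincide on the next interval, and so forth.

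The delicate point is the contraction estimate, and in particular the handling of the $Z$-difference. The whole reason the argument can run only on $Y$ is that there is no self-contained contraction available for $Z$ in the $G$-framework; the escape is that freezing $y=U_s$ turns the generator differences into differences at a common $z$, so the $z$-Lipschitz term is absorbed into the a priori-estimate constant rather than feeding $|Z^1-Z^2|$ back into the estimate. One must then verify that the constant $\Theta(T-t_0)$ emerging from the $G$-BSDE a priori estimates and from Proposition \ref{proposition} genuinely tends to $0$ with the interval length, rather than merely staying bounded, since this is exactly what permits the local contraction and, through the backward iteration, the global statement. The remaining bookkeeping---admissibility of the intermediate terminal conditions, continuity of the glued $Y$ and $A$, the martingale property of the glued $K$, and the additivity of the Skorokhod conditions---is routine but must be checked to conclude.
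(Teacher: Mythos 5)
Your overall architecture is the paper's: freeze the $y$-slot with a process $U$, solve the resulting special doubly mean reflected $G$-BSDE via Theorem \ref{thm3.8}, decompose $Y=\bar{Y}+(A_T-A)$, estimate the two pieces separately, obtain a local contraction, and glue backward. However, two of the concrete steps you propose would fail as stated. First, the contraction cannot close in the $S_G^\alpha(t_0,T)$-norm. To bound $\hat{\mathbb{E}}[\sup_t|\bar{Y}^1_t-\bar{Y}^2_t|^\alpha]$ from the pointwise estimate \eqref{AA3} one must invoke the Doob-type maximal inequality (Theorem \ref{myw204}), which costs an integrability exponent: the resulting right-hand side is $\hat{\mathbb{E}}\big[\big(\int_{t_0}^T|U^1_s-U^2_s|ds\big)^{\alpha(1+\theta)}\big]^{1/(1+\theta)}$, a strictly stronger norm of $U^1-U^2$ than $\|U^1-U^2\|_{S_G^\alpha}$; and since Theorem \ref{thm3.8} does not place $Y$ in $S_G^\beta$, you cannot close the loop at the top exponent in the $S$-scale. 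The paper avoids the maximal inequality altogether by contracting in $M_G^\beta(0,T)$, using only the time-pointwise moment bound $\hat{\mathbb{E}}[|Y^1_t-Y^2_t|^\beta]\leq \widetilde{C}e^{\widetilde{C}T}\hat{\mathbb{E}}[(\int_0^T|U^1_s-U^2_s|ds)^\beta]$ obtained by taking unconditional expectations in \eqref{AA3}.

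Second, your route to the $A$-difference — Proposition \ref{proposition} with constant coefficients $C^i_s=f(s,U^i_s,Z^i_s)$ — reintroduces exactly the term you correctly identified as fatal: it yields a bound involving $\hat{\mathbb{E}}[\int|Z^1_s-Z^2_s|ds]$, and the only available control on the $Z$-difference, \eqref{AA4}, is of square-root type in $\|\bar{Y}^1-\bar{Y}^2\|_{S_G^\alpha}$ (hence in $\|U^1-U^2\|$). A modulus of the form $\epsilon t+C\sqrt{t}$ is not a contraction near zero, so Banach's theorem does not apply and uniqueness of the fixed point would not follow. The paper's fix is to bypass Proposition \ref{proposition} and apply the Skorokhod stability estimate (Theorem \ref{BSP'}) directly, exploiting that the input function of the backward Skorokhod problem satisfies $s^i_t=\hat{\mathbb{E}}[\bar{Y}^i_0]-\hat{\mathbb{E}}[\bar{Y}^i_t]$ and that the boundary functions $\hat{L},\hat{R}$ are built from $\bar{Y}^i_t-\hat{\mathbb{E}}[\bar{Y}^i_t]$ alone, so that $\sup_t|A^1_t-A^2_t|$ is controlled by $\sup_t\hat{\mathbb{E}}[|\bar{Y}^1_t-\bar{Y}^2_t|]$ with no appearance of $Z^1-Z^2$. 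With these two corrections your local-to-global gluing, the admissibility of the intermediate terminal values, and the backward-induction uniqueness argument all match the paper and go through.
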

\begin{proof}
Without loss of generality, we assume $g \equiv 0$ in the proof for simplicity. We first consider the case that $T\leq \delta$, where $\delta$ is a constant small enough to be determined later. Given $U^i \in M_G^\beta(0,T)$, Theorem \ref{thm3.8} ensures that the following $G$-BSDE with double mean reflection admits a unique solution $(Y^i,Z^i,K^i,A^i)\in \mathfrak{S}^\alpha_G\times BV[0,T]$, and moreover $Y^i\in M_G^\beta(0,T)$,
\begin{equation}\label{A4}
\begin{cases}
Y^i_t=\xi+\int_t^T f(s,U^i_s,Z^i_s)ds-\int_t^T Z^i_s dB_s-(K^i_T-K^i_t)+(A^i_T-A^i_t), \\
\hE[L(t,Y^i_t)]\leq 0\leq \hE[R(t,Y^i_t)], \\
A^i_t=A^{R,i}_t-A^{L,i}_t, \ A^{R,i},A^{L,i}\in I[0,T] \textrm{ and } \int_0^T \hE[R(t,Y^i_t)]dA_t^{R,i}=\int_0^T \hE[L(t,Y^i_t)]dA^{L,i}_t=0.
\end{cases}
\end{equation}
Then, we define a map $\Gamma:M_G^\beta(0,T)\rightarrow M_G^\beta(0,T)$ as follows:
\begin{align*}
    \Gamma(U^i)=Y^i.
\end{align*}
 From the proof of Theorem \ref{thm3.8}, we could see that the following relationship holds:
\begin{align*}
(Y_{t}^i,Z_{t}^i,K_{t}^i)=(\bar{Y}_{t}^i+\left(A_{T}^i-A_{t}^i\right),\bar{Z}_{t}^i,\bar{K}_{t}^i),\ \forall t\in[0,T],
\end{align*}
where $(\bar{Y}^i,\bar{Z}^i,\bar{K}^i)$ is the solution to the  $G$-BSDE below:
\begin{align*}
\bar{Y}_{t}^i=\xi+\int_{t}^{T} f\left(s, U_{s}^{i},\bar{Z}_{s}^i\right) d s-\int_{t}^{T} \bar{Z}_{s}^i d B_{s}-\left(\bar{K}_{T}^i-\bar{K}_{t}^i\right).
\end{align*}
Thus,
\begin{align}\label{A5}
|Y_{t}^1-Y_{t}^2|\leq |\bar{Y}_{t}^1-\bar{Y}_{t}^2|+2\sup _{0 \leq t \leq T}|A_{t}^1-A_{t}^2|.
\end{align}
According Theorem \ref{mywA3}, there exists a constant $\widetilde{C}(\alpha,G,\kappa)>0$ such that
\begin{align}\label{A6}
|\bar{Y}_{t}^1-\bar{Y}_{t}^2|^{\alpha}\leq \widetilde{C}(\alpha,G,\kappa)e^{\widetilde{C}(\alpha,G,\kappa)T}\hE_t\bigg[\Big(\int_t^T|U_{s}^1-U_{s}^2|ds\Big)^{\alpha}\bigg].
\end{align}
Recalling Theorem \ref{BSP'} and the proof of Proposition \ref{prop7}, we have
\begin{align}\label{A7}
\sup_{t\in[0,T]}|A^1_t-A^2_t|\leq 2\frac{C}{c}|a^1-a^2|+4\frac{C}{c}\sup_{t\in[0,T]}|s^1_t-s^2_t|+2\frac{C}{c}\sup_{(t,x)\in[0,T]\times \mathbb{R}}(|\hat{L}(t,x)|\vee|\hat{R}(t,x)|),
\end{align}
where for $i=1,2$, $a^i=\hE[\xi]$ and
\begin{align*}
&s^i_t=\hE\bigg[\xi+\int_0^T f\left(s, U_{s}^{i},\bar{Z}_{s}^i\right) d s\bigg]-\hE\bigg[\xi+\int_t^T f\left(s, U_{s}^{i},\bar{Z}_{s}^i\right) d s\bigg]=\hE[\bar{Y}^i_0]-\hE[\bar{Y}^i_t],\\
&\hat{L}(t,x):=\hE[L(t,\bar{Y}^1_t-\hE[\bar{Y}^1_t]+x)]-\hE[L(t,\bar{Y}^2_t-\hE[\bar{Y}^2_t]+x)],\\
&\hat{R}(t,x):=\hE[R(t,\bar{Y}^1_t-\hE[\bar{Y}^1_t]+x)]-\hE[R(t,\bar{Y}^2_t-\hE[\bar{Y}^2_t]+x)].
\end{align*}
Combining \eqref{A5}-\eqref{A7} and by simple calculation, we could get
\begin{align*}
\hE\Big[|{Y}_{t}^1-{Y}_{t}^2|^{\beta}\Big]\leq \widetilde{C}(C,c,\beta,G,\kappa)e^{\widetilde{C}(\beta,G,\kappa)T}\hE\bigg[\Big(\int_0^T|U_{s}^1-U_{s}^2|ds\Big)^{\beta}\bigg].
\end{align*}
It follows from the H{\"o}lder inequality that
\begin{align*}
\hE\Big[|{Y}_{t}^1-{Y}_{t}^2|^{\beta}\Big]\leq \widetilde{C}(C,c,\beta,G,\kappa)e^{\widetilde{C}(\beta,G,\kappa)T}T^{\beta-1}\hE\bigg[\int_0^T|U_{s}^1-U_{s}^2|^{\beta}ds\bigg].
\end{align*}
Consequently,
$$
\left\|{Y}^{1}-{Y}^{2}\right\|_{M_G^\beta} \leq\left(\int_{0}^T \hat{\mathbb{E}}\left[\left|{Y}_t^{1}-{Y}_t^{2}\right|^\beta\right] d t\right)^{\frac{1}{\beta}} \leq \widetilde{C}(C,c,\beta,G,\kappa)e^{\widetilde{C}(\beta,G,\kappa)T}T\left\|U^1-U^2\right\|_{M_G^\beta}.
$$
Choosing $\delta$ small enough such that $\widetilde{C}(C,c,\beta,G,\kappa)e^{\widetilde{C}(\beta,G,\kappa)\delta}\delta<1$, we could conclude that the map $\Gamma(U):=Y$ from $M_G^\beta(0, T)$ to $M_G^\beta(0, T)$ is a contraction. Moreover, taking $U$ in \eqref{A4} as the fixed point of the map $\Gamma$, we have $Y\in S_G^\alpha(0, T)$. Therefore, the $G$-BSDE with double mean reflections \eqref{nonlinearyz1} has a unique solution on the small interval $[0,T]$.

For the general $T$, we choose $n\geq 1$ such that $n\delta \geq T$. Set $T_k:=\frac{k T}{n}$, for $k=0,1, \cdots, n$. By backward induction, for $k=n, n-1, \cdots, 1$, there exists a unique solution $\left(Y^k, Z^k, K^k,A^k\right)$ to the following $G$-BSDE with double mean reflection on the interval $\left[T_{k-1}, T_k\right]$
\begin{displaymath}
\begin{cases}
Y_t^k=Y_{T_k}^{k+1}+\int_t^{T_k} f\left(s, Y_s^k, Z_s^k\right) d s-\int_t^{T_k} Z_s^k d B_s-(K_{T_k}^k-K_t^k)+(A_{T_k}^k-A^k_t), \\
\hE[L(t, Y_t^k)] \leq 0 \leq \hE[R(t, Y_t^k)], t \in\left[T_{k-1}, T_k\right], \\
A_{T_{k-1}}^k=0, A_t^k=A_t^{k, R}-A_t^{k, L}, \int_{T_{k-1}}^{T_k} \hE[R(t, Y_t^k)] d A_t^{k, R}=\int_{T_{k-1}}^{T_k} \hE[L(t, Y_t^k)] d A_t^{k, L}=0,
\end{cases}
\end{displaymath}
where $Y_T^{n+1}=Y_T^{n}=\xi$. We denote
\begin{align*}
&Y_t=\sum_{k=1}^n Y_t^k I_{\left[T_{k-1}, T_k\right)}(t)+Y_T^{n} I_{\{T\}}(t), \  Z_t=\sum_{k=1}^n Z_t^k I_{\left[T_{k-1}, T_k\right)}(t)+Z_T^{n} I_{\{T\}}(t),\\
&K_t=K_t^k+\sum_{j=1}^{k-1} K_{T_j}^j, \ A_t=A_t^k+\sum_{j=1}^{k-1} A_{T_j}^j, \ t\in\left[T_{k-1}, T_k\right], \ k=1,\cdots, n,
\end{align*}
with the notation $\sum_{j=1}^{0} K_{T_j}^j=\sum_{j=1}^{0} A_{T_j}^j=0$ ($A^R, A^L$ are defined similarly). It is easy to check that $(Y,Z,K,A)\in \mathfrak{S}^\alpha_G\times BV[0,T]$ is  a  solution to $G$-BSDE \eqref{nonlinearyz1} with double mean reflections. The uniqueness follows immediately from the uniqueness on each small interval, which ends the proof.
\end{proof}

\subsection{The case of non-Lipschitz coefficients}
In this subsection, we shall investigate the well-posedness of the mean reflected $G$-BSDE \eqref{nonlinearyz1} under weaker conditions on the continuity property of the coefficients w.r.t. $y$. More precisely, we assume that $f, g$ satisfy (i) in Assumption \ref{assfg} and the following so-called $\beta$-order Mao's condition. 
\begin{assumption}\label{assfg2}
For some $\beta>1$, and any $y, y^{\prime} \in \mathbb{R},z, z^{\prime} \in \mathbb{R}^{d}$, $t\in [0, T]$
\begin{align*}
\begin{split}
\left|f(t, y, z)-f\left(t, y^{\prime},z^{\prime}\right)\right|^{\beta}+\left|g(t, y, z)-g\left(t, y^{\prime}, z^{\prime}\right)\right|^{\beta}\leq \rho(\left|y-y^{\prime}\right|^{\beta})+\kappa\left|z-z^{\prime}\right|^{\beta},
\end{split}
\end{align*}
where $\kappa$ is a positive constant, and  $\rho:[0,+\infty) \rightarrow[0,+\infty)$ is a continuous non-decreasing concave function with $\rho(0)=0$, $\rho(r)>0$ for $r>0$, such that $\int_{0+}\rho^{-1}(r)d r=+\infty$.
\end{assumption}

\begin{remark}
In the classical framework, Watanabe and Yamada \cite{YW, WY}  proved the pathwise uniqueness of solutions to SDEs under some similar non-Lipschitz condition in 1970s. Later, Mao \cite{Mao} studied the solvability of the BSDE whose generator $f$ satisfies $$\left|f(t, y, z)-f\left(t, y^{\prime}, z^{\prime}\right)\right|^{2} \leq \rho(\left|y-y^{\prime}\right|^{2})+\kappa\left|z-z^{\prime}\right|^{2}.$$ Notably, our condition presented in Assumption \ref{assfg2} is more general compared with the one in  \cite{Mao}.  Typical examples of $f,g$ satisfying such $\beta$-order Mao's condition can be found in Example 3.6 of \cite{H}.
\end{remark}
\begin{theorem}\label{myw310} Suppose that Assumption \ref{ass2}, Assumption \ref{assfg2} and (i) in Assumption \ref{assfg} hold. Let $\xi\in L_G^\beta(\Omega_T)$ be such that
$\hE[L(T,\xi)]\leq 0\leq \hE[R(t,\xi)]$. Then the $G$-BSDE with double mean reflections \eqref{nonlinearyz1} has a unique solution $(Y,Z,K,A)\in \mathfrak{S}^\alpha_G\times BV[0,T]$ for each $1< \alpha<\beta$.
\end{theorem}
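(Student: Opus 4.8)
The plan is to adapt the fixed-point strategy of Theorem \ref{thm3.10} to the non-Lipschitz setting, replacing the contraction argument (which relied on Lipschitz continuity in $y$) with a Picard iteration that invokes Bihari's inequality. First I would treat the small-interval case $T\leq\delta$ and then patch intervals together by backward induction exactly as in the last part of the proof of Theorem \ref{thm3.10}; the gluing step is routine once the local result is in hand. On a small interval, I would construct a Picard sequence: set $U^0\equiv 0$ (or any fixed element of $M_G^\beta(0,T)$) and define $Y^{n+1}=\Gamma(U^n)$ with $U^{n+1}:=Y^{n+1}$, where $\Gamma$ is the map built through Theorem \ref{thm3.8}: given $U^n$, solve the non-reflected $G$-BSDE
\begin{align*}
\bar{Y}^{n+1}_t=\xi+\int_t^T f(s,U^n_s,\bar{Z}^{n+1}_s)\,ds-\int_t^T \bar{Z}^{n+1}_s\,dB_s-(\bar{K}^{n+1}_T-\bar{K}^{n+1}_t),
\end{align*}
then form the associated backward Skorokhod problem to obtain $A^{n+1}$ and set $Y^{n+1}_t=\bar{Y}^{n+1}_t+(A^{n+1}_T-A^{n+1}_t)$. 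Here I must first check that $f(\cdot,U^n_\cdot,\cdot)$ retains enough regularity so that Theorem \ref{mywA1} applies at each step and $Y^{n+1}\in M_G^\beta(0,T)$; this follows from Assumption \ref{assfg2} together with the growth and integrability conditions, just as in the proof of Theorem \ref{thm3.8}.

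The heart of the argument is a stability estimate for the increments $Y^{n+1}-Y^n$. Decomposing as in \eqref{A5}, I would write $|Y^{n+1}_t-Y^n_t|\leq|\bar{Y}^{n+1}_t-\bar{Y}^n_t|+2\sup_{t}|A^{n+1}_t-A^n_t|$, control the $\bar{Y}$-difference by the a priori estimate of Theorem \ref{mywA3}, and control the $A$-difference by Theorem \ref{BSP'} as in \eqref{A7}. The crucial difference from the Lipschitz case is that when I estimate $\hE[|\bar{Y}^{n+1}_t-\bar{Y}^n_t|^\beta]$, the driver difference $|f(s,U^n_s,\bar{Z}^{n+1}_s)-f(s,U^{n-1}_s,\bar{Z}^n_s)|$ is bounded via Assumption \ref{assfg2} by a term involving $\rho(|U^n_s-U^{n-1}_s|^\beta)=\rho(|Y^n_s-Y^{n-1}_s|^\beta)$ rather than a linear term. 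Using the concavity of $\rho$ together with Jensen's inequality (Lemma \ref{myw210}) to pull $\rho$ outside the expectation, and integrating, I expect to arrive at an inequality of the form
\begin{align*}
\phi_{n+1}(t)\leq \widetilde{C}\,e^{\widetilde{C}T}\int_t^T \rho\big(\phi_n(s)\big)\,ds,
\end{align*}
where $\phi_n(t):=\sup_{P}\E^P[\sup_{s\in[t,T]}|Y^n_s-Y^{n-1}_s|^\beta]$ or a comparable quantity; on a sufficiently small interval the prefactor can be absorbed.

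The main obstacle, as I see it, is twofold. First, because $G$-expectation is only subadditive, I cannot freely use the tower property and linearity that make the classical Bihari argument clean; I expect to rely on the Doob-type estimate (Theorem \ref{myw204}) and Jensen's inequality (Lemma \ref{myw210}) to move $\rho$ and the supremum through $\hE$ correctly, and care is needed because the concavity inequality runs in the direction $\rho(\hE[X])\geq\hE[\rho(X)]$, so I must set up the recursion so that $\rho$ sits on the right side. Second, I must combine the $\bar{Y}$-recursion with the Skorokhod $A$-recursion into a single scalar function and then invoke Bihari's inequality to conclude that $\sup_n\phi_n(t)\to 0$, i.e. that $(Y^n)$ is Cauchy in $M_G^\beta(0,T)$ (and hence, after passing to the limit, in $S_G^\alpha$). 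Once the limit $Y$ exists, identifying $(Y,Z,K,A)$ as a genuine solution requires passing to the limit in the reflected $G$-BSDE and in the Skorokhod conditions, using the continuous dependence of the backward Skorokhod problem (Theorems \ref{BSP} and \ref{BSP'}) to secure convergence of $A^n$ and the closedness of the constraints; uniqueness then follows by the same Bihari estimate applied to two solutions, forcing their difference to vanish.
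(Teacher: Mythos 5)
Your proposal follows essentially the same route as the paper: a Picard iteration that freezes only the $y$-argument of the generator, the representation $Y^n_t=\bar Y^n_t+(A^n_T-A^n_t)$ with $\bar Y^n$ solving the non-reflected $G$-BSDE and $A^n$ coming from the backward Skorokhod problem, control of $\sup_t|A^{n+m}_t-A^n_t|$ via the stability of the Skorokhod map, Jensen's inequality for the concave $\rho$, and the backward Bihari inequality to get the Cauchy property; uniqueness by the same Bihari estimate. This matches the paper's proof of Theorem \ref{myw310} and Lemma \ref{myw311} in all essentials.

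Two points deserve correction or completion. First, the localization to $T\le\delta$ followed by backward patching is unnecessary here, and the remark that ``on a sufficiently small interval the prefactor can be absorbed'' is contraction-mapping reasoning that does not apply to a genuinely non-Lipschitz $\rho$ (e.g.\ $\rho(u)=u\ln(1/u)$ near $0$): no choice of $\delta$ makes the iteration a contraction. What actually closes the argument, on the whole interval $[0,T]$ at once, is the Osgood condition $\int_{0+}\rho^{-1}(r)\,dr=+\infty$ applied to the limit function $\alpha(t)=\limsup_n\sup_m\sup_{t\le r\le T}\hE[|Y^{n+m}_r-Y^n_r|^\beta]$, which satisfies $\alpha(t)\le \widetilde C\int_t^T\rho(\alpha(s))\,ds$ and hence vanishes regardless of $T$ and $\widetilde C$. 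Second, to pass to that limsup (via dominated convergence) and to run the subsequent $S_G^\alpha$-, $H_G^\alpha$- and $K$-convergence estimates, you need the uniform a priori bound $\sup_n\sup_{t}\hE[|Y^n_t|^\beta]<\infty$, obtained from the linear-growth consequence $\rho(u)\le a+bu$ of concavity together with a Gronwall-type recursion; your sketch only verifies that each $Y^{n}$ lies in $M_G^\beta(0,T)$, not that the bounds are uniform in $n$. With these two repairs the argument coincides with the paper's.
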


Since there is no appropriate estimate for the $Z$ component (see Remark 3.15 in \cite{LiuW} for details), we cannot construct a contraction mapping involving $Z$. As a consequence, in order to prove Theorem \ref{myw310}, we use the Picard iteration argument only for the $Y$ component. The approximate sequence is constructed recursively as follows:  Let $Y_{t}^0 \equiv 0$, for any $ t \in [0, T]$, and we define
\begin{equation*}
\begin{cases}
Y^n_t=\xi+\int_t^T f(s,Y^{n-1}_s,Z^n_s)ds+\int_t^T g(s,Y^{n-1}_s,Z^n_s)d\langle B\rangle_s-\int_t^T Z^n_s dB_s-(K^n_T-K^n_t)+(A^n_T-A^n_t), \\
\hE[L(t,Y^n_t)]\leq 0\leq \hE[R(t,Y^n_t)], \\
A^n_t=A^{R,n}_t-A^{L,n}_t, \ A^{R,n},A^{L,n}\in I[0,T] \textrm{ and } \int_0^T \hE[R(t,Y^n_t)]dA_t^{R,n}=\int_0^T \hE[L(t,Y^n_t)]dA^{L,n}_t=0.
\end{cases}
\end{equation*}
The sequence $\{{Y}^n\}$ is well-defined in  $M_{G}^{\beta}(0, T)$ according to Lemma 2.15 in \cite{H} and Theorem \ref{thm3.8}. Furthermore, we have the following relationship from the proof of Theorem \ref{thm3.8}:
\begin{align}\label{309}
(Y_{t}^n,Z_{t}^n,K_{t}^n)=(\bar{Y}_{t}^n+\left(A_{T}^n-A_{t}^n\right),\bar{Z}_{t}^n,\bar{K}_{t}^n),\ \forall t\in[0,T],
\end{align}
where $(\bar{Y}^n,\bar{Z}^n,\bar{K}^n)$ is the solution to the following $G$-BSDE
\begin{align}\label{310}
\bar{Y}_{t}^n=\xi+\int_{t}^{T} f\left(s, Y_{s}^{n-1}, \bar{Z}_{s}^n\right) d s+\int_t^T g(s,Y^{n-1}_s,\bar{Z}_{s}^n)d\langle B\rangle_s-\int_{t}^{T} \bar{Z}_{s}^n d B_{s}-\left(\bar{K}_{T}^n-\bar{K}_{t}^n\right).
\end{align}

Firstly, we need to give some a prior estimates, which are crucial for the subsequent discussions. Without loss of generality, we assume $g \equiv 0$ in what follows  for simplicity.
\begin{lemma}\label{myw311} Under the same assumptions as in Theorem \ref{myw310}, we could get
\begin{description}
\item[(i)] There exists a positive constant $\widetilde{C}(\beta, T, G, \kappa,c,C,\rho)$ such that for all $n\geq 1$
\begin{align*}
\begin{split}
\sup_{0 \leqslant t \leqslant T}\hat{\mathbb{E}}\left[\left|Y_t^{n}\right|^{\beta}\right]\leq \widetilde{C}(\beta, T, G, \kappa,c,C,\rho).
\end{split}
\end{align*}
\item[(ii)]

$\sup\limits_{0 \leqslant t \leqslant T}\hat{\mathbb{E}}\left[\left|Y_t^{n+m}-Y_t^{n}\right|^{\beta}\right]\rightarrow 0, \  as\ n, m \rightarrow \infty.$

\end{description}
\end{lemma}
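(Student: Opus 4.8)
The plan is to prove the two a priori estimates by exploiting the decomposition \eqref{309}, which reduces the study of the mean reflected sequence $\{Y^n\}$ to the corresponding $G$-BSDE solutions $\{\bar Y^n\}$ plus the deterministic bounded-variation correction $A^n$. The key observation is that $|Y^n_t|^\beta \leq \widetilde C(\beta)(|\bar Y^n_t|^\beta + |A^n_T - A^n_t|^\beta)$, so it suffices to bound both pieces. For the $\bar Y^n$ part I would apply the a priori estimates for $G$-BSDEs from \cite{HJPS1} (invoked here through the cited Theorem \ref{mywA3}), and for the $A^n$ part I would use the backward Skorokhod estimate, specifically the linear-growth bound of Theorem \ref{AA209}, which controls $|A^n_T - A^n_t| = |k_T - k_t|$ by $\widetilde C(|a + s_T - s_t| + 1)$. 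Here $a = \hE[\xi]$ and $s_t = \hE[\bar Y^n_0] - \hE[\bar Y^n_t]$, so $|a + s_T - s_t|$ is controlled by $\hE[|\xi|] + \hE[|\bar Y^n_t|]$, which is in turn bounded via the $G$-BSDE estimates.

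For part (i), first I would fix $t$ and write the $G$-BSDE a priori estimate for $\bar Y^n_t$ in terms of $\hE_t[(\int_t^T |f(s,Y^{n-1}_s,\bar Z^n_s)|ds)^\beta]$, then use the $\beta$-order Mao condition (Assumption \ref{assfg2}) to dominate $|f(s,Y^{n-1}_s,\bar Z^n_s)|^\beta$ by $|f(s,0,\bar Z^n_s)|^\beta + \rho(|Y^{n-1}_s|^\beta)$ up to constants, where by Assumption \ref{assfg} (i) the term $f(\cdot,0,\cdot)$ contributes a finite, $n$-independent bound. Combining this with the Skorokhod growth estimate for $A^n$, I would arrive at a recursive inequality of the form $\sup_t \hE[|Y^n_t|^\beta] \leq \widetilde C(1 + \sup_t \rho(\hE[|Y^{n-1}_t|^\beta]))$, where passing $\rho$ outside the expectation uses Jensen's inequality (Lemma \ref{myw210}) together with the concavity and monotonicity of $\rho$. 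The uniform bound then follows by an induction argument showing the right-hand side stays below a fixed level, using that $\rho$ has at most linear growth (a consequence of concavity and $\rho(0)=0$), so the recursion does not blow up.

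For part (ii), I would estimate $\hE[|Y^{n+m}_t - Y^n_t|^\beta]$ again through the decomposition, splitting into the difference $\bar Y^{n+m} - \bar Y^n$ and the difference $A^{n+m} - A^n$. The $\bar Y$ difference is handled by the stability estimate \eqref{A6}-type bound, giving $|\bar Y^{n+m}_t - \bar Y^n_t|^\beta \leq \widetilde C \, \hE_t[(\int_t^T |f(s,Y^{n+m-1}_s,\cdot) - f(s,Y^{n-1}_s,\cdot)|ds)^\beta]$, which via Mao's condition is dominated by $\rho(|Y^{n+m-1}_s - Y^{n-1}_s|^\beta)$; the $Z$-terms cancel because the same $G$-BSDE estimate does not leave a residual $Z$ difference once we use the Lipschitz-in-$z$ part and the structure of the estimate. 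The $A$ difference is controlled by the continuous-dependence estimate of Theorem \ref{BSP'}, which bounds $\sup_t |A^{n+m}_t - A^n_t|$ by the data differences, ultimately again by $\rho$ of the previous $Y$ differences. Setting $u_n := \limsup_{m} \sup_t \hE[|Y^{n+m}_t - Y^n_t|^\beta]$, I would obtain an inequality $u_n \leq \widetilde C \int_0^T \rho(u_{n-1})\,ds$ after integrating and using Jensen again, and then conclude $u_n \to 0$ by Bihari's inequality, exactly as foreshadowed in the introduction.

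The main obstacle will be the Cauchy step (ii): carefully justifying that the $G$-BSDE difference estimate can be written purely in terms of the $y$-differences (so that the $z$-Lipschitz contribution is absorbed rather than producing an uncontrolled $Z$ term), and then correctly assembling the recursion into a form to which Bihari's inequality applies. This requires keeping the constants uniform in $n,m$, correctly interchanging $\rho$ with expectations through Jensen, and ensuring the concave function $\rho$ is evaluated on the scalar quantities $\sup_t \hE[|Y^{n+m-1}_t - Y^{n-1}_t|^\beta]$ rather than inside the stochastic integral, which is precisely where the concavity and the $\int_{0+}\rho^{-1} = +\infty$ hypothesis become essential.
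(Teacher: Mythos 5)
Your overall strategy is the same as the paper's: reduce to the pair $(\bar Y^n, A^n)$ via \eqref{309}, control $\bar Y^n$ by the $G$-BSDE a priori estimates, control $A^n$ by the backward Skorokhod estimates, move $\rho$ outside the expectation with Lemma \ref{myw210}, and close with Bihari. However, in both parts you close the recursion incorrectly, and the way you do it breaks the argument.

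In part (i), you take $\sup_t$ too early and arrive at $x_n \leq \widetilde{C}\bigl(1+\rho(x_{n-1})\bigr)$ with $x_n=\sup_t\hE[|Y^n_t|^\beta]$, claiming this stays bounded because $\rho(u)\leq a+bu$. That is false in general: the recursion becomes $x_n\leq \widetilde{C}(1+a)+\widetilde{C}b\,x_{n-1}$, and since $\widetilde{C}$ contains factors like $e^{\widetilde{C}T}$ and $T$ there is no reason to have $\widetilde{C}b<1$, so $x_n$ may grow geometrically. The paper avoids this by \emph{keeping the time integral}: one gets $\hE[|Y^n_t|^\beta]\leq \widetilde{C}\{1+\int_t^T\hE[|Y^{n-1}_s|^\beta]\,ds\}$ and verifies by induction that $\hE[|Y^n_t|^\beta]\leq p(t)$, where $p(t)=\widetilde{C}e^{\widetilde{C}(T-t)}$ solves the corresponding linear Volterra equation; the Gronwall structure, not the linear growth of $\rho$ alone, is what prevents blow-up. (A smaller point in the same step: the $z$-argument of $f$ must be evaluated at $0$, not at $\bar Z^n_s$, when you extract the inhomogeneous term from \eqref{AA1}; the Lipschitz-in-$z$ dependence is absorbed into the constant of the a priori estimate, whereas $f(s,0,\bar Z^n_s)$ is not an $n$-independent quantity.)

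In part (ii) the same issue is fatal rather than merely sloppy. You propose to control the $A$-difference by the global estimate of Theorem \ref{BSP'}, i.e.\ by $\sup_{t\in[0,T]}$ of the data differences, and you then write the recursion as $u_n\leq \widetilde{C}\int_0^T\rho(u_{n-1})\,ds=\widetilde{C}T\rho(u_{n-1})$ for \emph{numbers} $u_n$. Bihari's inequality does not apply to such a discrete recursion, and the inequality $\alpha_\infty\leq \widetilde{C}T\rho(\alpha_\infty)$ for $\alpha_\infty=\limsup_n u_n$ does not force $\alpha_\infty=0$ (take $\rho(u)=u$ and $\widetilde{C}T\geq 1$). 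What the paper actually proves, in \eqref{AA313'}, is a \emph{localized} Skorokhod stability estimate: using the explicit sup-representation of the reflecting term from \cite{Li,Li2}, the quantity $|(A^{n+m}_T-A^{n+m}_t)-(A^n_T-A^n_t)|$ is bounded by $\widetilde{C}(C,c)\sup_{v\in[t,T]}\hE[|\bar Y^{n+m}_v-\bar Y^n_v|]$, with the supremum only over $[t,T]$. This is exactly what yields the functional inequality $u_{n,m}(t)\leq \widetilde{C}\int_t^T\rho(u_{n-1,m}(s))\,ds$, so that $\alpha(t)=\limsup_n\sup_m u_{n,m}(t)$ satisfies $\alpha(t)\leq\widetilde{C}\int_t^T\rho(\alpha(s))\,ds$ and the \emph{backward} Bihari inequality (with the Osgood condition $\int_{0+}\rho^{-1}(r)\,dr=+\infty$) gives $\alpha\equiv 0$. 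Without this localization your recursion cannot be closed, so the missing ingredient is precisely the refinement of Theorem \ref{BSP'} established in \eqref{AA313'}.
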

\begin{proof} The proof relies heavily on the representation \eqref{309}-\eqref{310} and the basic estimates of $G$-BSDEs in \cite{HJPS1} (see the appendix for details). The main difficulty lies in deriving some suitable estimates related to the constraining process $A$.

{\bf Proof of Assertion (i):}
Since $\rho$ is concave and $\rho(0)=0,$ one can find a pair of positive constants $a$ and
$b$ such that
\begin{align*}
\rho(u) \leqslant a+b u, \quad \text { for all } u \geqslant 0.
\end{align*}
So, there exists a constant $\widetilde{C}(\rho,\beta)$ such that
\begin{align}\label{A311}
\left|f(s, {Y}_s^{n-1},0)\right|^{\beta}\leq \widetilde{C}(\rho,\beta)\Big(1+\left|f(s, 0, 0)\right|^{\beta}+|{Y}_s^{n-1}|^{\beta}\Big).
\end{align}
Making full use of \eqref{AA1} in the appendix and by a similar argument as the proof of Lemma 3.3 in \cite{H}, we deduce that
\begin{align}\label{A312}
\begin{split}
\left|\bar{Y}_t^{n}\right|^{\beta}\leq \widetilde{C}(\beta, T, G, \kappa,\rho)\hat{\mathbb{E}}_{t}\bigg[1+|\xi|^{\beta}+\int_{t}^{T}\Big(\left|f(s, 0, 0)\right|^{\beta}+|{Y}_s^{n-1}|^{\beta}\Big)d s\bigg],
\end{split}
\end{align}
Moreover, recalling Theorem \ref{AA209} and the proof of Proposition \ref{prop7}, we have
\begin{align}\label{AA312}
\begin{split}
\left|A_T^{n}-A_t^{n}\right|^{\beta}&\leq \widetilde{C}(\beta, C,c)\Big(\big|\hE[\xi]+\hE[\bar{Y}_0^{n}]-\hE[\xi]-\hE[\bar{Y}_0^{n}]+\hE[\bar{Y}_t^{n}] \big|^{\beta} \\
&\ \ \ \ \ \ \ \ \ \ \ \ \ +\sup_{t\in[0,T]}\big(\hE[|L(t,0)|^\beta]+\hE[|R(t,0)|^\beta]\big)\Big)\\
&\leq \widetilde{C}(\beta, T, C,c)\Big(\hE[|\bar{Y}_t^{n}|^{\beta}] +1\Big),
\end{split}
\end{align}
which together with \eqref{309} gives
\begin{align*}
\begin{split}
\hat{\mathbb{E}}\Big[\big|{Y}_t^{n}\big|^{\beta}\Big]\leq \widetilde{C}(\beta, T, G, \kappa,c,C,\rho)\Bigg\{1+\int_{t}^{T}\hat{\mathbb{E}}\left[|{Y}_s^{n-1}|^{\beta}\right]d s\Bigg\}.
\end{split}
\end{align*}
Set $p(t)=\widetilde{C}(\beta, T, G, \kappa,c,C,\rho)e^{\widetilde{C}(\beta, T, G, \kappa,c,C,\rho)(T-t)}$ and $p(\cdot)$ solves the following ODE,
\begin{align*}
p(t)=\widetilde{C}(\beta, T, G, \kappa,c,C,\rho)\bigg(1+\int_{t}^{T}p(s)d s\bigg).
\end{align*}
Hence, it is easy to verify by recurrence that for any $n\geq1$,
\begin{align*}
\hat{\mathbb{E}}\left[\left|Y_t^{n}\right|^{\beta}\right]\leq p(t),\ t \in [0, T],
\end{align*}
which completes the proof.

{\bf Proof of Assertion (ii):} For any $n, m\geq 1$, according to \eqref{AA3} in the appendix and using the same technique as \eqref{A312}, we could find a constant $\widetilde{C}(\beta, T, G, \kappa)>0$ such that
\begin{align}\label{AA3312}
\begin{split}
\hat{\mathbb{E}}\left[\left|\bar{Y}_t^{n+m}-\bar{Y}_t^{n}\right|^{\beta}\right]&\leq \widetilde{C}(\beta, T, G, \kappa)\int_{t}^{T}\hat{\mathbb{E}}\Big[\rho\Big(\left|{Y}_s^{n+m-1}-{Y}_s^{n-1}\right|^{\beta}\Big)\Big]d s\\
&\leq \widetilde{C}(\beta, T, G, \kappa)\int_{t}^{T}\rho\Big(\hat{\mathbb{E}}\Big[\left|{Y}_s^{n+m-1}-{Y}_s^{n-1}\right|^{\beta}\Big]\Big)d s,
\end{split}
\end{align}
where we have used Jensen's inequality in $G$-framework (see Lemma \ref{myw210}) in the second inequality. Getting insight from the proof of Proposition 3.14 in \cite{Li2}, the proof of Theorem 3.9 in \cite{Li} and the proof of Theorem \ref{thm3.10}, it is worth noting that 
\begin{equation}\label{AA313'}
\begin{split}
&\Big|(A_{T}^{n+m}-A_{t}^{n+m})-(A_{T}^n-A_{t}^n)\Big|=|\widetilde{k}^{n+m}_{T-t}-\widetilde{k}^{n}_{T-t}|\\
\leq &\frac{C}{c}\sup_{v\in[0,T-t]}|\widetilde{s}^{n+m}_v-\widetilde{s}^{n}_v|+\frac{1}{c}\sup_{(v,x)\in[0,T-t]\times \mathbb{R}}\Big(|\widehat{\widetilde{l}}^{m,n}(v,x)|\vee|\widehat{\widetilde{r}}^{m,n}(v,x)|\Big)\\
=&\frac{C}{c}\sup_{v\in[0,T-t]}\Big|{s}^{n+m}_T-{s}^{n+m}_{T-v}-\big({s}^{n}_T-{s}^{n}_{T-v}\big)\Big|+\frac{1}{c}\sup_{(v,x)\in[t,T]\times \mathbb{R}}\Big(|\widehat{l}^{m,n}(v,x)|\vee|\widehat{r}^{m,n}(v,x)|\Big)\\
\leq& \frac{C}{c}\sup_{v\in[0,T-t]}\Big(\Big|\hE[\bar{Y}_T^{n+m}]-\hE[\bar{Y}_T^{n}]\Big|+ \Big|\hE[\bar{Y}_{T-v}^{n+m}]-\hE[\bar{Y}_{T-v}^{n}]\Big|\Big)+\frac{C}{c}\sup_{v\in[t,T]}\hat{\mathbb{E}}\left[\left|\bar{Y}_v^{n+m}-\bar{Y}_v^{n}\right|\right]\\
\leq&\widetilde{C}(C,c)\sup_{v\in[t,T]}\hat{\mathbb{E}}\left[\left|\bar{Y}_v^{n+m}-\bar{Y}_v^{n}\right|\right],
\end{split}
\end{equation}
where, for any $n\geq 1$, $a^n=\hE[\xi]$ and for any $t\in[0,T]$ 
\begin{align*}
&s^n_t=\hE\bigg[\xi+\int_0^T f\left(s, Y_{s}^{n-1},\bar{Z}_{s}^n\right) d s\bigg]-\hE\bigg[\xi+\int_t^T f\left(s, Y_{s}^{n-1},\bar{Z}_{s}^n\right) d s\bigg]=\hE[\bar{Y}^n_0]-\hE[\bar{Y}^n_t],\\
&l^n(t,x)=\hE[L(t,\bar{Y}^n_t-\hE[\bar{Y}^n_t]+x)],\ r^n(t,x)=\hE[R(t,\bar{Y}^n_t-\hE[\bar{Y}^n_t]+x)],\\
& \widetilde{s}^{n}_t=a^n+s_T^n-s_{T-t}^n,\ \widetilde{l}^{n}(t,x)=l^n(T-t,x),\ \widetilde{r}^{n}(t,x)=r^n(T-t,x), \\
&\widehat{h}^{m,n}(t,x)=h^{n+m}(t,x)-h^n(t,x), \textrm{ for } h=l,r,\widetilde{l},\widetilde{r},
\end{align*}
and $(\widetilde{x}^n,\widetilde{k}^n)$ denotes the unique solution to the Skorokhod problem $\mathbb{SP}_{\widetilde{l}^n}^{\widetilde{r}^n}(\widetilde{s}^n)$. Thus, it follows from the representation \eqref{309} that
\begin{align}\label{A313}
\begin{split}
\hat{\mathbb{E}}\left[\left|{Y}_t^{n+m}-{Y}_t^{n}\right|^{\beta}\right]&\leq \widetilde{C}(\beta, T, G, \kappa,c,C)\int_{t}^{T}\rho\Big(\hat{\mathbb{E}}\Big[\left|{Y}_s^{n+m-1}-{Y}_s^{n-1}\right|^{\beta}\Big]\Big)d s.
\end{split}
\end{align}
Furthermore, the above inequality can be rewritten as
\begin{align*}
u_{n, m}(t)\leq \widetilde{C}(\beta, T, G, \kappa,c,C)\int_{t}^{T}\rho(u_{n-1, m}(s))d s,
\end{align*}
where $u_{n, m}(t)=\sup_{t \leqslant r \leqslant T}\hat{\mathbb{E}}\left[\left|Y_r^{n+m}-Y_r^{n}\right|^{
\beta}\right]$ is uniformly bounded by Assertion (i). Set $v_n(t)=\sup_{m}u_{n, m}(t)$ and $\alpha(t)=\limsup _{n\rightarrow+\infty} v_{n}(t)$. Applying Lebesgue's dominated convergence theorem, we could get
\begin{align*}
0\leq \alpha(t)\leq\widetilde{C}(\beta, T, G, \kappa,c,C)\int_{t}^{T}\rho\Big(\alpha(s)\Big)d s, \ 0 \leq t \leq T.
\end{align*}
The final result is immediate from the backward Bihari's inequality.
\end{proof}

Now we are ready to present the proof of Theorem \ref{myw310}, which is the main result of this subsection.
\begin{proof}[Proof of  Theorem \ref{myw310}] We first prove the existence, which will be divided into the following three steps.

{\bf Step 1. The uniform estimates.} We notice that there exists $\theta>0$ such that $\alpha(1+\theta)=\beta$. Consequently, in view of Theorem \ref{myw204} and \eqref{AA1} in the appendix, we could find a positive constant $\widetilde{C}( \beta, T, G, \kappa)$ such that
\begin{align}\label{A314}
\begin{split}
\hat{\mathbb{E}}\left[\sup_{0 \leqslant t \leqslant T}\left|\bar{Y}_t^{n}\right|^{\alpha}\right]&\leq \widetilde{C}( \beta, T, G, \kappa)\bigg(\hat{\mathbb{E}}\bigg[\Big||\xi|^{\alpha}+\int_{0}^{T}\left|f(s, Y_s^{n-1}, 0)\right|^{\alpha}d s\Big|^{1+\theta}\bigg]\bigg)^{\frac{1}{(1+\theta)}}.
\end{split}
\end{align}
Observe that from Assertion (i) in Lemma \ref{myw311} and \eqref{A311}, we could get
\begin{align*}
\begin{split}
&\hat{\mathbb{E}}\bigg[\Big||\xi|^{\alpha}+\int_{0}^{T}\left|f(s, {Y}_s^{n-1}, 0)\right|^{\alpha}d s\Big|^{1+\theta}\bigg]\\
\leq &\widetilde{C}(\beta, T)\Bigg\{\hat{\mathbb{E}}\left[|\xi|^{\beta}\right]+\hat{\mathbb{E}}\bigg[\int_{0}^{T}\left|f(s, {Y}_s^{n-1}, 0)\right|^{\beta}d s\bigg]\Bigg\}\\
\leq &\widetilde{C}(\beta, T,\rho)\Bigg\{ \hat{\mathbb{E}}\left[|\xi|^{\beta}\right]+\sup_{0 \leqslant t \leqslant T}\hat{\mathbb{E}}\left[\left|{Y}_t^{n-1}\right|^{\beta}\right]+1+\hat{\mathbb{E}}\bigg[\int_{0}^{T}\left|f(s, 0, 0)\right|^{\beta}d s \bigg] \Bigg\}\\
\leq& \widetilde{C}(\beta, T, G, \kappa,c,C,\rho).
\end{split}
\end{align*}
Moreover, recalling \eqref{AA312}, we have
\begin{align}\label{A316}
\begin{split}
\sup _{0 \leq t \leq T}|A_{T}^n-A_{t}^n|^{\alpha}\leq \widetilde{C}(\alpha,T,c,C)\Big(1+\hat{\mathbb{E}}\Big[\sup _{0 \leq t \leq T}\left|\bar{Y}_t^{n}\right|^{\alpha}\Big]\Big).
\end{split}
\end{align}
Therefore, we can conclude from the representation \eqref{309} and Assertion (i) in Lemma \ref{myw311} that for any $n\geq1$,
\begin{align}\label{A317}
\hat{\mathbb{E}}\left[\sup_{0 \leqslant t \leqslant T}\left|Y_t^{n}\right|^{\alpha}\right]&\leq\widetilde{C}(\beta, T, G, \kappa,c,C,\rho).
\end{align}
We recall that $({Z}^n,{K}^n)$ can be seen as a part of the solution to $G$-BSDE \eqref{310}. Therefore, using \eqref{AA2} in the appendix  together with the H{\"o}lder inequality, we derive that
\begin{align}\label{A318}
\begin{split}
&\|{{Z}^n}\|_{H_{G}^{\alpha}}^{\alpha}+\|{K}^n_{T}\|_{L_{G}^{\alpha}}^{\alpha}=\|{\bar{Z}^n}\|_{H_{G}^{\alpha}}^{\alpha}+\|\bar{K}^n_{T}\|_{L_{G}^{\alpha}}^{\alpha}\\
\leq &\widetilde{C}(\alpha, T, G, \kappa) \Bigg\{\left\|{\bar{Y}^n}\right\|_{S_{G}^{\alpha}}^{\alpha}+\hat{\mathbb{E}}\bigg[\Big|\int_{0}^{T} \Big|{f}\left(s, {Y}_s^{n-1},  0\right)\Big|d s\Big|^{\alpha}\bigg]\Bigg\}\\
\leq &\widetilde{C}(\alpha, T, G, \kappa) \Bigg\{\left\|{\bar{Y}^n}\right\|_{S_{G}^{\alpha}}^{\alpha}+\hat{\mathbb{E}}\bigg[\Big|\int_{0}^{T} \Big|{f}\left(s, {Y}_s^{n-1}, 0\right)\Big|^{\beta}d s\Big|^{\frac{\alpha}{\beta}}\bigg]\Bigg\}\\
\leq &\widetilde{C}(\alpha, T, G, \kappa,\rho) \Bigg\{\|{\bar{Y}^n}\|_{S_{G}^{\alpha}}^{\alpha}+\|{{Y}^{n-1}}\|_{S_{G}^{\alpha}}^{\alpha}+1+\hat{\mathbb{E}}\bigg[\Big|\int_{0}^{T} \big|{f}\left(s,  0,0\right)\big|^{\beta}d s\Big|^{\frac{\alpha}{\beta}}\bigg]\Bigg\}\\
\leq &\widetilde{C}(\beta, T, G, \kappa,c,C,\rho).
\end{split}
\end{align}

{\bf Step 2. The convergence.} We start from showing that $\{{Y}^n\}_{n\in\mathbb{N}}$ is a Cauchy sequence in $S_G^{\alpha}(0, T)$. By  \eqref{AA3} in the appendix, the following inequality can be deduced in a similar way to \eqref{A314}. Indeed,
\begin{align*}
\begin{split}
\hat{\mathbb{E}}\bigg[\sup_{0 \leqslant t \leqslant T}\left|\bar{Y}_t^{n+m}-\bar{Y}_t^{n}\right|^{\alpha}\bigg]&\leq\widetilde{C}(\beta, T,G,\kappa)\bigg(\hat{\mathbb{E}}\bigg[\Big|\int_{0}^{T}\left|f_s^{n+m-1}-f_s^{n-1}\right|^{\alpha}d s\Big|^{1+\theta}\bigg]\bigg)^{\frac{1}{(1+\theta)}},
\end{split}
\end{align*}
where $f_s^{n+m-1}-f_s^{n-1}=f(s, Y_s^{n+m-1},\bar{Z}_s^{n+m})-f(s, Y_s^{n-1}, \bar{Z}_s^{n+m})$. The following inequalities comes from the H{\"o}lder inequality and Assumption \ref{assfg2},
\begin{align*}
\begin{split}
\hat{\mathbb{E}}\bigg[\Big|\int_{0}^{T}\left|f_s^{n+m-1}-f_s^{n-1}\right|^{\alpha}d s\Big|^{1+\theta}\bigg]
&\leq \widetilde{C}(\beta, T)\hat{\mathbb{E}}\bigg[\int_{0}^{T}\left|f_s^{n+m-1}-f_s^{n-1}\right|^{{\alpha}(1+\theta)}d s\bigg]\\
&\leq \widetilde{C}(\beta, T)\hat{\mathbb{E}}\bigg[\int_{0}^{T}\rho\Big(| {Y}_s^{n+m-1}-{Y}_s^{n-1}|^{\beta}\Big)d s\bigg]\\
&\leq \widetilde{C}(\beta, T)\rho\Big(\sup_{0 \leqslant s \leqslant T}\hat{\mathbb{E}}\left[| {Y}_s^{n+m-1}-{Y}_s^{n-1}|^{\beta}\right]\Big),
\end{split}
\end{align*}
where in the last inequality we have used Lemma \ref{myw210}. Moreover, recalling \eqref{AA313'}, we could derive that 
\begin{align*}
\begin{split}
\sup_{0 \leqslant t \leqslant T}\Big|(A_{T}^{n+m}-A_{t}^{n+m})-(A_{T}^n-A_{t}^n)\Big|^{\alpha}\leq\widetilde{C}(\alpha,C,c)\sup_{t\in[0,T]}\hat{\mathbb{E}}\left[\left|\bar{Y}_t^{n+m}-\bar{Y}_t^{n}\right|^{\alpha}\right].
\end{split}
\end{align*}
All the above conclusions along with the representation \eqref{309} and Lemma \ref{myw311} imply that $Y^n$ is a Cauchy sequence in $S_{G}^{\alpha}(0, T) $.  As for the ${Z}$-component, with the help of  \eqref{AA4} in the appendix and the same techniques as \eqref{A318}, we could get 
\begin{align*}
\begin{split}
&\|{{Z}}^{n+m}-{{Z}}^{n}\|_{H_{G}^{\alpha}}^{\alpha}=\|{\bar{Z}}^{n+m}-{\bar{Z}}^{n}\|_{H_{G}^{\alpha}}^{\alpha}\\
\leq &\widetilde{C}(\beta, T, G, \kappa,\rho)\Bigg\{\|\bar{Y}^{n+m}-\bar{Y}^{n}\|_{S_{G}^{\alpha}}^{\alpha}+\|\bar{Y}^{n+m}-\bar{Y}^{n}\|_{S_{G}^{\alpha}}^{\frac{\alpha}{2}} \bigg[\left\|\bar{Y}^{n+m}\right\|_{S_{G}^{\alpha}}^{\frac{\alpha}{2}}\\
&+\left\|{Y}^{n+m-1}\right\|_{S_{G}^{\alpha}}^{\frac{\alpha}{2}}+\left\|\bar{Y}^{n}\right\|_{S_{G}^{\alpha}}^{\frac{\alpha}{2}}+\left\|{Y}^{n-1}\right\|_{S_{G}^{\alpha}}^{\frac{\alpha}{2}}+1+\hat{\mathbb{E}}\Big[\Big|\int_{0}^{T}|{f}\left(s, 0,0\right)|^{\beta}d s\Big|^{\frac{\alpha}{\beta}}\Big]^{\frac{1}{2}}\bigg]\Bigg\},
\end{split}
\end{align*}
which indicates $\{{Z}^n\}$ is a Cauchy sequence in $H_G^{\alpha}(0, T)$. Besides, due to \eqref{AA313'}, we have
\begin{equation}\label{A319}
\begin{split}
\sup_{0 \leqslant t \leqslant T}|A_t^{n+m}-A_t^n|^{\alpha}
&\leq \sup_{0 \leqslant t \leqslant T}\Big|(A_{T}^{n+m}-A_{t}^{n+m})-(A_{T}^n-A_{t}^n)\Big|^{\alpha}+\Big|A_{T}^{n+m}-A_{T}^n\Big|^{\alpha}\\
&\leq \widetilde{C}(\alpha,c,C)\sup _{0 \leq t \leq T}\hat{\mathbb{E}}\left[\left|\bar{Y}^{n+m}_t-\bar{Y}^n_t\right|^{\alpha}\right] .
\end{split}
\end{equation}
Consequently, there are three processes $(Y, Z,A) \in S_{G}^{\alpha}(0, T) \times H_{G}^{\alpha}(0, T)\times BV[0,T]$ such that
\[
\lim _{n \rightarrow \infty}\left(\left\|Y^{n}-Y\right\|_{S_{G}^{\alpha}}^{\alpha}+\left\|Z^{n}-Z\right\|_{H_{G}^{\alpha}}^{\alpha}+\sup_{0 \leqslant t \leqslant T}|A_t^n-A_t|^{\alpha}\right)=0.
\]
 Note that $\lim _{n \rightarrow \infty}\left\|Y^{n}-Y\right\|_{S_{G}^{\alpha}}^{\alpha}=0$ means $Y^n$ converges to $Y$ pointwisely, which together with (ii) in Lemma \ref{myw311} gives $\lim _{n \rightarrow \infty}\sup\limits_{0 \leqslant t \leqslant T}\hat{\mathbb{E}}\left[\left|Y_t^{n}-Y_t\right|^{\beta}\right]=0$.  Then, we define 
\[
K_{t}=Y_{t}-Y_{0}+\int_{0}^{t} f\left(s, Y_{s}, Z_{s}\right) d s-\int_{0}^{t} Z_{s} d B_{s}+A_t.
\]
We claim that
\begin{align}\label{A320}
\lim _{n \rightarrow \infty}\|{K}^{n}-{K}\|_{S_{G}^{\alpha}}^{\alpha}=0,
\end{align}
which will be proved in step 3. Note that ${K}^{n}=\bar{K}^{n}$ in \eqref{309} and $\bar{K}^{n}$ is a non-increasing $G$-martingale, which together with \eqref{A320} ensure ${K}$ is a non-increasing $G$-martingale. Then $({Y}, {Z}, {K}, {A}) \in \mathfrak{S}_G^\alpha \times BV[0,T]$ for any $1<\alpha<\beta$ is actually the solution to mean reflected $G$-BSDE \eqref{nonlinearyz1} with non-Lipschitz cofficients.

{\bf Step 3.} In fact,
\begin{align*}
\begin{split}
{K}_t^{n}-{K}_t={Y}_t^n-{Y}_t-({Y}_{0}^{n}-{Y}_{0})&+\int_{0}^{t}\Big(f(s,{Y}_s^{n-1},{Z}_s^{n})-f(s,{Y}_s,{Z}_s) \Big)d s\\
&-\int_{0}^{t}({Z}_s^{n}-{Z}_s)dB_s+{A}_t^n-{A}_t.
\end{split}
\end{align*}
Then, by Proposition \ref{BDG}, Lemma \ref{myw210} and simple calculation, we arrive at
\begin{align*}
\begin{split}
&\|{K}^{n}-{K}\|_{S_G^\alpha}^\alpha\leq \widetilde{C}\Bigg\{\|{Y}^{n}-{Y}\|_{S_G^\alpha}^\alpha+\hat{\mathbb{E}}\bigg[\Big(\int_{0}^{T}|f(s,{Y}_s^{n-1},{Z}_s^{n})-f(s,{Y}_s,{Z}_s^{n})|^{\beta} d s\Big)^{\frac{\alpha}{\beta}}\bigg]\\
&\ \ \ \ \ \  +\hat{\mathbb{E}}\bigg[\Big(\int_{0}^{T}|f(s,{Y}_s,{Z}_s^{n})-f(s,{Y}_s,{Z}_s)|^{2} d s\Big)^{\frac{\alpha}{2}}\bigg]+\|{Z}^{n}-{Z}\|_{H_G^\alpha}^\alpha+\sup_{0 \leqslant t \leqslant T}|{A}_t^{n}-{A}_t|^{\alpha}\Bigg\}\\
&\leq \widetilde{C}\Bigg\{\|{Y}^{n}-{Y}\|_{S_G^\alpha}^\alpha+\hat{\mathbb{E}}\bigg[\Big(\int_{0}^{T}\rho \big(|{Y}_s^{n-1}-{Y}_s|^{\beta} \big)d s\Big)^{\frac{\alpha}{\beta}}\bigg]+\|{Z}^{n}-{Z}\|_{H_G^\alpha}^\alpha+\sup_{0 \leqslant t \leqslant T}|{A}_t^{n}-{A}_t|^{\alpha}\Bigg\}\\
&\leq \widetilde{C}\Bigg\{\|{Y}^{n}-{Y}\|_{S_G^\alpha}^\alpha+\|{Z}^{n}-{Z}\|_{H_G^\alpha}^\alpha +\bigg|\rho\bigg(\sup_{0 \leqslant s \leqslant T}\hat{\mathbb{E}}\Big[|{Y}_s^{n-1}-{Y}_s|^{\beta}\Big]\bigg)\bigg|^{\frac{\alpha}{\beta}}+\sup_{0 \leqslant t \leqslant T}|{A}_t^{n}-{A}_t|^{\alpha}\Bigg\},
\end{split}
\end{align*}
where $\widetilde{C}=\widetilde{C}(\beta, T, G,\kappa,c,C)$. Thus the claim \eqref{A320} holds.

It remains to prove Uniqueness.  Assume that $({Y}^{\prime}, {Z}^{\prime}, {K}^{\prime}, {A}^{\prime})$ is another  solution to mean reflected $G$-BSDE \eqref{nonlinearyz1}. Similar with \eqref{A313}, we can derive that
\begin{align*}
\begin{split}
\hat{\mathbb{E}}\left[|{Y}_t-{Y}_t^{\prime}|^{\beta}\right]\leq \widetilde{C}(\beta, T, G, \kappa,c,C)\int_{t}^{T}\rho\bigg(\hat{\mathbb{E}}\Big[|{Y}_s-{Y}_s^{\prime}|^{\beta}\Big]\bigg)d s.
\end{split}
\end{align*}
It follows from the backward Bihari's inequality that
\begin{align*}
\hat{\mathbb{E}}\left[|{Y}_t-{Y}_t^{\prime}|^{\beta}\right]=0, \ t \in [0, T].
\end{align*}
From the continuity of ${Y}$ and ${Y}^{\prime}$, we could conclude that ${Y}_t={Y}_t^{\prime}$, $0\leq t\leq T$, q.s.. Now observe that $(Y, Z, K,A)$ and $(Y', Z', K',A')$ can both be seen as the  solutions to the following mean reflected $G$-BSDE on the time interval $[0, T]$
\begin{align*}
\begin{cases}
\bar{Y}_t=\xi+\int_t^T f(s,Y_s,\bar{Z}_s)ds-\int_t^T \bar{Z}_s dB_s-(\bar{K}_T-\bar{K}_t)+(\bar{A}_T-\bar{A}_t), \\
\hE[L(t,\bar{Y}_t)]\leq 0\leq \hE[R(t,\bar{Y}_t)], \\
\bar{A}_t=\bar{A}^R_t-\bar{A}^L_t, \ \bar{A}^R,\bar{A}^L\in I[0,T] \textrm{ and } \int_0^T \hE[R(t,\bar{Y}_t)]d\bar{A}_t^R=\int_0^T \hE[L(t,\bar{Y}_t)]d\bar{A}^L_t=0.
\end{cases}
\end{align*}
The uniqueness follows from Theorem \ref{thm3.8} and the fact $Y\in M_{G}^{\beta}(0, T)$.
\end{proof}

\section{Properties of solutions to $G$-BSDEs with double mean reflections}

In this section, we shall investigate some properties of the  solutions to $G$-BSDEs with double mean reflections which may bring some inspiration to the stochastic control problems and financial issues in our future research.
\subsection{Comparison theorem}
First, it is natural to conjecture that if
the loss function $R$ is larger, the force aiming to push the solution upwards is less and similarly, if the loss function $L$ is smaller, the force aiming to pull the solution downwards is less. To be more precisely, we have the following comparison theorem for the solutions to doubly mean reflected $G$-BSDEs with respect to the loss functions.

\begin{proposition}\label{prop11'}
Let the coefficients $f,g$ satisfy Assumption \ref{assfg1}. Suppose that $\xi\in L^\beta(\Omega_T)$ and $L^i,R^i$ satisfy Assumption \ref{ass2} with $\hE[L^i(T,\xi)]\leq 0\leq \hE[R^i(T,\xi)]$, $i=1,2$. Let $(Y^i,Z^i,K^i,A^i)$ be the solution to the doubly mean reflected $G$-BSDE with parameters $(\xi,f,g,L^i,R^i)$, $i=1,2$, respectively. Assume furthermore that for any $(t,x)\in[0,T]\times\mathbb{R}$, $L^1(t,x)\leq L^2(t,x)$, $R^1(t,x)\leq R^2(t,x)$, q.s. Then, for any $t\in[0,T]$, we have $Y^2_t\leq Y^1_t$.
\end{proposition}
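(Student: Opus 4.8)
The plan is to reduce the comparison of the two solutions to a comparison principle for the associated backward Skorokhod problems, and then to establish the latter by a first-crossing (contradiction) argument in the same spirit as the uniqueness proof of Proposition \ref{prop7}.

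First I would carry out the exponential change of variables used in Theorem \ref{thm3.8}: setting $a_t=\int_0^t\gamma_s\,ds$ and $Y^{a,i}_t=e^{a_t}Y^i_t$ turns \eqref{nonlinearyz11} into a doubly mean reflected $G$-BSDE whose generator no longer depends on $y$, with terminal value $\xi^a=e^{a_T}\xi$ and loss functions $L^{a,i}(t,x)=L^i(t,e^{-a_t}x)$, $R^{a,i}(t,x)=R^i(t,e^{-a_t}x)$. Since $e^{a_t}>0$, the ordering $Y^2_t\leq Y^1_t$ is equivalent to $Y^{a,2}_t\leq Y^{a,1}_t$, and since $e^{-a_t}x$ is a fixed real for fixed $(t,x)$, the pointwise ordering $L^1\leq L^2$, $R^1\leq R^2$ passes to $L^{a,1}\leq L^{a,2}$, $R^{a,1}\leq R^{a,2}$ (these transformed functions still satisfy Assumption \ref{ass2}, as in Theorem \ref{thm3.8}). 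Hence it suffices to treat the case where $f$ is independent of $y$. In that case the non-reflected $G$-BSDE $(\bar Y,\bar Z,\bar K)$ associated with $(\xi^a,f)$ does not involve the loss functions and is therefore common to both problems; by the construction in Proposition \ref{prop7} we have $Y^{a,i}_t=\bar Y_t+(A^i_T-A^i_t)$, and $x^i_t:=\hE[Y^{a,i}_t]$ is exactly the first component of the solution $(x^i,A^i)=\mathbb{BSP}_{l^i}^{r^i}(s,a)$, with the common input $s_t=\hE[\bar Y_0]-\hE[\bar Y_t]$, $a=\hE[\xi^a]$ and boundaries $l^i(t,x)=\hE[L^{a,i}(t,\bar Y_t-\hE[\bar Y_t]+x)]$, $r^i(t,x)=\hE[R^{a,i}(t,\bar Y_t-\hE[\bar Y_t]+x)]$. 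These satisfy Assumption \ref{ass1} by Lemma \ref{proofofass1}, and by monotonicity of $\hE[\cdot]$ the boundary ordering $l^1\leq l^2$, $r^1\leq r^2$ follows.

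Because $\bar Y$ is common, $Y^{a,2}_t\leq Y^{a,1}_t$ for all $t$ is equivalent to $A^2_T-A^2_t\leq A^1_T-A^1_t$, i.e.\ to $x^2_t\leq x^1_t$. Thus everything reduces to the following comparison for the backward Skorokhod problem: if $l^1\leq l^2$ and $r^1\leq r^2$ share the same $s$ and $a$, then $x^2\leq x^1$. I would prove this by contradiction. Set $D_t=x^1_t-x^2_t$, note $D_T=0$ by Definition \ref{def}(i), and suppose $D_{t_1}<0$ for some $t_1<T$; letting $t_2=\inf\{t\geq t_1:D_t=0\}$, continuity of $D$ and $D_T=0$ give $t_1<t_2\leq T$ and $x^1_t<x^2_t$ on $[t_1,t_2)$. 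On this interval, strict monotonicity of $r^2$ (Assumption \ref{ass1}(ii)) together with $r^2\geq r^1$ and the constraint $r^1(t,x^1_t)\geq 0$ give $r^2(t,x^2_t)>r^2(t,x^1_t)\geq r^1(t,x^1_t)\geq 0$, so the Skorokhod condition $\int r^2(t,x^2_t)\,dA^{2,R}_t=0$ forces $dA^{2,R}=0$ on $[t_1,t_2)$; symmetrically, strict monotonicity of $l^1$, $l^1\leq l^2$ and $l^2(t,x^2_t)\leq 0$ give $l^1(t,x^1_t)<0$, so $dA^{1,L}=0$ there.

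Finally I would read off the contradiction from the representation $x^i_t=a+s_T-s_t+A^i_T-A^i_t$. For $t\in[t_1,t_2)$, since $A^{1,L}$ is constant and $A^{1,R}$ nondecreasing on $[t,t_2]$ we get $A^1_{t_2}-A^1_t\geq 0$, while $A^{2,R}$ constant and $A^{2,L}$ nondecreasing give $A^2_{t_2}-A^2_t\leq 0$; hence
\[
D_{t_2}-D_t=-(A^1_{t_2}-A^1_t)+(A^2_{t_2}-A^2_t)\leq 0,
\]
so $0=D_{t_2}\leq D_{t_1}<0$, a contradiction. Therefore $D_t\geq 0$ for all $t$, i.e.\ $x^2\leq x^1$, which yields $Y^2_t\leq Y^1_t$ after undoing the change of variables. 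The main obstacle I anticipate is the reduction step: one must verify carefully that the underlying non-reflected $G$-BSDE is genuinely independent of the loss functions (so that $\bar Y$, $s$, $a$ coincide for the two problems) and that the change of variables preserves both the ordering of the solutions and the ordering and structural hypotheses on the loss functions; once this bookkeeping is in place, the first-crossing argument is the only genuinely analytic ingredient, and it runs exactly parallel to the uniqueness argument in Proposition \ref{prop7}.
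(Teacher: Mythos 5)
Your proposal is correct and follows essentially the same route as the paper's proof: reduce to a $y$-independent generator via the exponential transform of Theorem \ref{thm3.8}, use uniqueness of the underlying non-reflected $G$-BSDE to get $Y^1_t-(A^1_T-A^1_t)=Y^2_t-(A^2_T-A^2_t)$, and then run a first-crossing contradiction argument combining the Skorokhod conditions with the strict monotonicity from Lemma \ref{proofofass1}. Your repackaging of the last step as a comparison principle for $\mathbb{BSP}_{l^i}^{r^i}(s,a)$ with $x^i_t=\hE[Y^i_t]$ is only a notational reorganization of the same inequalities the paper writes directly in terms of $\hE[L^i(t,Y^i_t)]$ and $\hE[R^i(t,Y^i_t)]$.
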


\begin{proof}
Recalling the proof of Theorem \ref{thm3.8}, it suffices to  consider the case where $g=0$ and $f$ does not depends on $y$. First, note that $(Y^i-(A^i_T-A^i),Z^i,K^i)$ are the solutions to $G$-BSDE with terminal value $\xi$ and coefficient $f$, $i=1,2$. It follows from the uniqueness result for $G$-BSDEs that
\begin{align}\label{equation25}
Y^1_t-(A^1_T-A^1_t)=Y^2_t-(A^2_T-A^2_t), \ t\in[0,T].
\end{align}
We claim that $A^2_T-A^2_t\leq A^1_T-A^1_t$ for any $t\in[0,T]$, which, together with \eqref{equation25}, implies the desired result. We prove this claim by way of contradiction. Suppose  that there exists some $t_1<T$, such that
\begin{align*}
A^2_T-A^2_{t_1}> A^1_T-A^1_{t_1}.
\end{align*}
We define
\begin{align*}
t_2:=\inf\{t\geq t_1: A^2_T-A^2_t\leq A^1_T-A^1_t\}.
\end{align*}
Since $A^i$, $i=1,2$ are continuous, it is easy to check that
\begin{align}\label{equation26}
A^2_T-A^2_{t_2}= A^1_T-A^1_{t_2}, \ A^2_T-A^2_t> A^1_T-A^1_t, \ t\in [t_1,t_2).
\end{align}
Recalling Eq. \eqref{equation25}, we derive that $Y^2_t>Y^1_t$, $t\in[t_1,t_2)$, which implies that
\begin{align*}
&\hE[R^2(t,Y^2_t)]\geq \hE[R^1(t,Y^2_t)]>\hE[R^1(t,Y^1_t)]\\
\geq &0\geq \hE[L^2(t,Y^2_t)]\geq \hE[L^1(t,Y^2_t)]>\hE[L^1(t,Y^1_t)], \ t\in[t_1,t_2),
\end{align*}
where the strict inequality follows by a similar analysis as in the proof of Lemma \ref{proofofass1}. By the Skorokhod condition, we have $d A^{2,R}_t=d A^{1,L}_t=0$, $t\in[t_1,t_2)$. Simple calculation yields that
\begin{align*}
&A^1_T-(A^{1,R}_{t_1}-A^{1,L}_{t_2})=A^1_T-(A^{1,R}_{t_1}-A^{1,L}_{t_1})=A^1_T-A^1_{t_1}\\
<&A^2_T-A^2_{t_1}=A^2_T-(A^{2,R}_{t_1}-A^{2,L}_{t_1})=A^2_T-(A^{2,R}_{t_2}-A^{2,L}_{t_1})\leq A^2_T-(A^{2,R}_{t_2}-A^{2,L}_{t_2})\\
=&A^2_T-A^2_{t_2}=A^1_T-A^1_{t_2}=A^1_T-(A^{1,R}_{t_2}-A^{1,L}_{t_2}),
\end{align*}
where we have used \eqref{equation26} and the fact that $A^{2,L}$ is nondecreasing. The above analysis implies that $A^{1,R}_{t_1}>A^{1,R}_{t_2}$, which is a contradiction. Thus, the claim $A^2_T-A^2_t\leq A^1_T-A^1_t$ holds for any $t\in[0,T]$  and consequently, $Y^2_t\leq Y^1_t$. The proof is complete.
\end{proof}

Recall that for the single mean reflected case, the first component of the solution satisfying the Skorokhod condition  is minimal among all deterministic solutions when the coefficient is  deterministically linearly dependent on $y$ (see Theorem 11 in \cite{BEH} for the classical case and Proposition 3.9 in \cite{LiuW} for the $G$-expectation case). For the double mean reflected case, we have two Skorokhod conditions, which ensure both the minimality of the forces aiming to push the solution upwards and to push the solution downwards, respectively. In the following, we investigate the case if we only impose the minimality property for one of the forces described above, i.e., we consider the following two mean reflected BSDEs with single Skorokhod condition
\begin{equation}\label{nonlinearyz3}
\begin{cases}
Y_t=\xi+\int_t^T f(s,Y_s,Z_s)ds+\int_t^T g(s,Z_s)d\langle B\rangle_s-\int_t^T Z_s dB_s-(K_T-K_t)+A_T-A_t, \\
\hat{\mathbb{E}}[L(t,Y_t)]\leq 0\leq \hat{\mathbb{E}}[R(t,Y_t)], \\
A_t=A^R_t-A^L_t,\ A^R,A^L\in I[0,T], \ \int_0^T \hE[R(t,Y_t)]dA_t^R=0,
\end{cases}
\end{equation}
and
\begin{equation}\label{nonlinearyz2}
\begin{cases}
Y_t=\xi+\int_t^T f(s,Y_s,Z_s)ds+\int_t^T g(s,Z_s)d\langle B\rangle_s-\int_t^T Z_s dB_s-(K_T-K_t)+A_T-A_t, \\
\hE[L(t,Y_t)]\leq 0\leq \hE[R(t,Y_t)], \\
A_t=A^R_t-A^L_t, \ A^R,A^L\in I[0,T], \ \int_0^T \hE[L(t,Y_t)]dA^L_t=0.
\end{cases}
\end{equation}
Since \eqref{nonlinearyz3} only imposes the minimality condition on $A^R$ and \eqref{nonlinearyz2}  only imposes the minimality condition on $A^L$, it is natural to conjecture that the solution of BSDE with double mean reflections falls between the solutions of \eqref{nonlinearyz3} and \eqref{nonlinearyz2}.

\begin{proposition}\label{prop11}
Suppose that the coefficients $f,g$ satisfy Assumption \ref{assfg1}. 
 Let $\xi\in L^\beta(\Omega_T)$ and $L,R$ satisfy Assumption \ref{ass2} with $\hE[L(T,\xi)]\leq 0\leq \hE[R(T,\xi)]$. Let $(\underline{Y},\underline{Z},\underline{K},\underline{A})$, $(\bar{Y},\bar{Z},\bar{K},\bar{A})$, $(Y,Z,K,A)$ be the solutions to \eqref{nonlinearyz3}, \eqref{nonlinearyz2}, \eqref{nonlinearyz11}, respectively. Then, for any $t\in[0,T]$, we have $\underline{Y}_t\leq Y_t\leq \bar{Y}_t$.
\end{proposition}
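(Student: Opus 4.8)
The plan is to mirror the strategy of Proposition \ref{prop11'}, reducing the whole comparison to an inequality between the deterministic pushing terms. First I would apply the exponential change of variables from the proof of Theorem \ref{thm3.8} (set $a_t=\int_0^t\gamma_s\,ds$ and pass to $e^{a_t}Y_t$, $e^{a_t}Z_t$, $\int_0^\cdot e^{a_s}dK_s$, $\int_0^\cdot e^{a_s}dA_s$) to reduce to the case $g\equiv 0$ and $f=f(s,z)$ independent of $y$. Two points must be checked at this stage: since $e^{a_t}>0$, the transformation preserves the ordering $\underline{Y}_t\le Y_t\le\bar{Y}_t$; and since $dA^{R,a}=e^{a}\,dA^{R}$ and $dA^{L,a}=e^{a}\,dA^{L}$ are positive multiples of the original measures, the \emph{single} Skorokhod conditions in \eqref{nonlinearyz3} and \eqref{nonlinearyz2} are each preserved.

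Once $f$ does not depend on $y$, for each of the three equations the process $Y_\cdot-(A_T-A_\cdot)$ together with $(Z,K)$ solves the \emph{same} $G$-BSDE with terminal value $\xi$ and generator $f(s,z)$, because the $A$'s are deterministic. By uniqueness of solutions to $G$-BSDEs there is a common triple $(\hat{Y},\hat{Z},\hat{K})$ with
\[
\underline{Y}_t-(\underline{A}_T-\underline{A}_t)=Y_t-(A_T-A_t)=\bar{Y}_t-(\bar{A}_T-\bar{A}_t)=\hat{Y}_t.
\]
Consequently the differences $Y_t-\bar{Y}_t$ and $\underline{Y}_t-Y_t$ are \emph{deterministic}, and the assertion is equivalent to the two inequalities $\underline{A}_T-\underline{A}_t\le A_T-A_t\le\bar{A}_T-\bar{A}_t$ for all $t\in[0,T]$.

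Each inequality I would establish by contradiction with a hitting-time argument in the spirit of the uniqueness proof of Proposition \ref{prop7}. For the upper bound, suppose $A_T-A_{t_1}>\bar{A}_T-\bar{A}_{t_1}$ at some $t_1<T$ and set $t_2=\inf\{t\ge t_1:A_T-A_t\le\bar{A}_T-\bar{A}_t\}$; by continuity one has equality at $t_2$ and strict inequality on $[t_1,t_2)$, i.e. $Y_t>\bar{Y}_t$ there. Since the gap $Y_t-\bar{Y}_t$ is a deterministic positive constant, the bi-Lipschitz lower bound in Assumption \ref{ass2} (3) together with the translation and monotonicity of $\hE$ yields the \emph{strict} inequalities $\hE[L(t,Y_t)]>\hE[L(t,\bar{Y}_t)]$ and $\hE[R(t,Y_t)]>\hE[R(t,\bar{Y}_t)]$ on $[t_1,t_2)$; this replaces the representation-theorem step of Lemma \ref{proofofass1} and is cleaner here because no random shift is involved. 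Then $\hE[L(t,\bar{Y}_t)]<\hE[L(t,Y_t)]\le 0$ with the $A^L$-Skorokhod condition of \eqref{nonlinearyz2} forces $d\bar{A}^L_t=0$ on $[t_1,t_2)$, while $\hE[R(t,Y_t)]>\hE[R(t,\bar{Y}_t)]\ge 0$ with the $A^R$-Skorokhod condition of \eqref{nonlinearyz11} forces $dA^R_t=0$ on $[t_1,t_2)$. Using only the monotonicity of the remaining components $A^L$ and $\bar{A}^R$, I would then compute $A_T-A_{t_2}\ge A_T-A_{t_1}>\bar{A}_T-\bar{A}_{t_1}\ge\bar{A}_T-\bar{A}_{t_2}$, contradicting the equality at $t_2$. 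The lower bound $\underline{A}_T-\underline{A}_t\le A_T-A_t$ is entirely symmetric, now pairing the $A^R$-Skorokhod condition of \eqref{nonlinearyz3} with the $A^L$-Skorokhod condition of \eqref{nonlinearyz11}.

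The main obstacle is bookkeeping rather than analysis: on the critical interval $[t_1,t_2)$ one must correctly match, for each equation, the single available Skorokhod condition with the correct barrier ($L$ or $R$), and verify that precisely the two flat relations $dA^R=0$, $d\bar{A}^L=0$ (respectively $d\underline{A}^R=0$, $dA^L=0$) are the ones forced, after which the monotonicity of $A^R,A^L$ closes the contradiction. The deterministic nature of the gaps, inherited from $G$-BSDE uniqueness, is exactly what makes the strict monotonicity of the constraints transparent and keeps the whole argument free of any delicate estimate on the $Z$ component.
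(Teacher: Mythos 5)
Your proposal is correct and follows essentially the same route as the paper: the paper proves Proposition \ref{prop11} by literally substituting $(\bar{Y},\bar{A},\bar{A}^R,\bar{A}^L)$ and $(Y,A,A^R,A^L)$ into the proof of Proposition \ref{prop11'}, which is exactly the reduction-plus-hitting-time contradiction you describe, with the single Skorokhod conditions matched to the barriers just as you have them. The only (harmless) variation is that you obtain the strict inequalities from the bi-Lipschitz bound applied to the deterministic gap, whereas the paper invokes the representation theorem as in Lemma \ref{proofofass1}.
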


\begin{proof}
We only prove the second inequality since the first one can be proved similarly. The proof is similar with the one for Proposition \ref{prop11'}.  Actually, what we need to do is to replace $Y^1,A^1,A^{1,R},A^{1,L}$ and $Y^2,A^2,A^{2,R},A^{2,L}$ in the proof of Proposition \ref{prop11'} by $\bar{Y},\bar{A},\bar{A}^{R},\bar{A}^{L}$ and $Y,A,A^{R},A^{L}$, respectively. The proof is complete.
\end{proof}

\begin{remark}
Proposition \ref{prop11'} and Proposition \ref{prop11} extends Proposition 4.6 and Proposition 4.4 in \cite{Li} to the $G$-expectation framework. What's more, since the constraints are made on the distribution of the solution, the comparison theorem for the general case may not hold true. Especially, the larger initial value may not induce the larger solution. A counterexample can be found in Example 3.10 in \cite{LiuW}.
\end{remark}

\subsection{Connection with optimization problems under model uncertainty}
Recalling that the expectation of the first component of doubly mean reflected BSDE in the classical case corresponds to the value function to some ``game" problem (see Theorem 3.6 in \cite{FS2} and Theorem 4.7 in \cite{Li}). More specifically, the value function is of the $\sup_q\inf_s$ ($=\inf_s\sup_q$) type, where $s,q$ are constants taking values from some bounded interval. However, under $G$-framework, due to the fact that $\hE[\cdot]$ is an upper expectation, the upper value function and the lower value function take the form of ``$\sup_q\inf_s\sup_{P}$" and ``$\inf_s\sup_q\inf_{P}$", respectively. Therefore, the ``game" problem under $G$-expectation is more complicated. We show that the expectation of the first component of the solution to a doubly mean reflected $G$-BSDE lies between the lower value and the upper value of some optimization problems. Especially, if the first component of the solution has no mean uncertainty, its expectation coincides with the lower value and upper value.

To be more precisely, for any fixed $1<\alpha<\beta$, let $(Y,Z,K,A)\in \mathfrak{S}^\alpha_G(0,T)\times BV[0,T]$ be the solution to the doubly mean reflected $G$-BSDE \eqref{nonlinearyz1}. We define
\begin{align*}
\widetilde{Y}_t:=\hE_t[\xi+\int_t^T f(s,Y_s,Z_s)ds+\int_t^T g(s,Y_s,Z_s)d\langle B\rangle_s].
\end{align*}
It is easy to check that $Y_t=\widetilde{Y}_t+(A_T-A_t)$. 
Since $A$ is deterministic, it follows that
\begin{align*}
A_T-A_t=\hE[Y_t]-\hE[\widetilde{Y}_t]=\hE[-\widetilde{Y}_t]-\hE[-Y_t].
\end{align*}
Consequently,  we have
\begin{align}\label{ywidetildey}
Y_t=\widetilde{Y}_t+\hE[Y_t]-\hE[\widetilde{Y}_t]=\widetilde{Y}_t+\hE[-\widetilde{Y}_t]-\hE[-Y_t].
\end{align}
Set
\begin{align*}
&\bar{r}^{\widetilde{Y}}(t,x):=\hE[R(t,x+\widetilde{Y}_t-\hE[\widetilde{Y}_t])],\  \bar{l}^{\widetilde{Y}}(t,x):=\hE[L(t,x+\widetilde{Y}_t-\hE[\widetilde{Y}_t])],\\
&\underline{r}^{\widetilde{Y}}(t,x):=\hE[R(t,x+\widetilde{Y}_t+\hE[-\widetilde{Y}_t])],\  \underline{l}^{\widetilde{Y}}(t,x):=\hE[L(t,x+\widetilde{Y}_t+\hE[-\widetilde{Y}_t])].
\end{align*}
By the proof of Lemma \ref{proofofass1}, for any $t\in[0,T]$, $\bar{r}^{\widetilde{Y}}(t,\cdot),\bar{l}^{\widetilde{Y}}(t,\cdot)$, $\underline{r}^{\widetilde{Y}}(t,\cdot),\underline{l}^{\widetilde{Y}}(t,\cdot)$ are continuous, strictly increasing and satisfy
\begin{align*}
&\lim_{x\rightarrow \infty}\bar{r}^{\widetilde{Y}}(t,x)=\lim_{x\rightarrow \infty}\bar{l}^{\widetilde{Y}}(t,x)=+\infty, \
\lim_{x\rightarrow -\infty}\bar{r}^{\widetilde{Y}}(t,x)=\lim_{x\rightarrow -\infty}\bar{l}^{\widetilde{Y}}(t,x)=-\infty,\\
&\lim_{x\rightarrow \infty}\underline{r}^{\widetilde{Y}}(t,x)=\lim_{x\rightarrow \infty}\underline{l}^{\widetilde{Y}}(t,x)=+\infty, \
\lim_{x\rightarrow -\infty}\underline{r}^{\widetilde{Y}}(t,x)=\lim_{x\rightarrow -\infty}\underline{l}^{\widetilde{Y}}(t,x)=-\infty.
\end{align*}
Hence, for any fixed $t\in[0,T]$, the equations $\bar{r}^{\widetilde{Y}}(t,x)=0$, $\bar{l}^{\widetilde{Y}}(t,x)=0$, $\underline{r}^{\widetilde{Y}}(t,x)=0$, $\underline{l}^{\widetilde{Y}}(t,x)=0$ have unique solutions, which are denoted by $\bar{r}_t$, $\bar{l}_t$, $\underline{r}_t$ and $\underline{l}_t$, respectively.


\begin{theorem}\label{theorem3.6}
Suppose that $(Y,Z,K,A)$ is the solution to the BSDE with double mean reflections \eqref{nonlinearyz1}. Then, for  any $t\in[0,T]$, we have
\begin{align}\label{inequality}
\inf_{s\in[t,T]}\sup_{q\in[t,T]}\underline{R}_t(s,q)\leq-\hE[-Y_t]\leq  \hE[Y_t]\leq \sup_{q\in[t,T]}\inf_{s\in[t,T]}\bar{R}_t(s,q),
\end{align}
where
\begin{align*}
&\underline{R}_t(s,q)=-\hE[-\underline{R}'_t(s,q)], \
\bar{R}_t(s,q)=\hE[\bar{R}'_t(s,q)],\\
&\bar{R}'_t(s,q)=\mathbb{I}_t^{s\wedge q}(f,g)-(K_{s\wedge q}-K_t)+\xi I_{\{s\wedge q=T\}}+\bar{r}_q I_{\{q< T,q\leq s\}}+\bar{l}_s I_{\{s<q\}},\\
&\underline{R}'_t(s,q)=\mathbb{I}_t^{s\wedge q}(f,g)-(K_{s\wedge q}-K_t)+\xi I_{\{s\wedge q=T\}}+\underline{r}_q I_{\{q< T,q\leq s\}}+\underline{l}_s I_{\{s<q\}}
\end{align*}
and, for any $0\leq u\leq v\leq T$,
\begin{align*}
\mathbb{I}_u^v(f,g):=\int_u^{v} f(r,Y_r,Z_r)dr+\int_u^{v} g(r,Y_r,Z_r)d\langle B\rangle_r.
\end{align*}
If for any $t\in[0,T]$, $\hE[Y_t]=-\hE[-Y_t]$, then we have
\begin{align}\label{equality}
\hE[Y_t]=\inf_{s\in[t,T]}\sup_{q\in[t,T]}\underline{R}_t(s,q)=   \sup_{q\in[t,T]}\inf_{s\in[t,T]}\bar{R}_t(s,q).
\end{align}
\end{theorem}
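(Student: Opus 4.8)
The plan is to recast everything in terms of the two deterministic real functions $m_t:=\hE[Y_t]$ and $n_t:=-\hE[-Y_t]$, and to read off from the constraints and the Skorokhod conditions the barrier structure they obey. Using $Y_t=\widetilde{Y}_t+(A_T-A_t)$ together with \eqref{ywidetildey}, one checks that $\hE[R(t,Y_t)]=\bar{r}^{\widetilde{Y}}(t,m_t)=\underline{r}^{\widetilde{Y}}(t,n_t)$ and $\hE[L(t,Y_t)]=\bar{l}^{\widetilde{Y}}(t,m_t)=\underline{l}^{\widetilde{Y}}(t,n_t)$, so that $\hE[L(t,Y_t)]\le 0\le\hE[R(t,Y_t)]$ becomes $\bar{r}_t\le m_t\le\bar{l}_t$ and $\underline{r}_t\le n_t\le\underline{l}_t$, while the two Skorokhod conditions say that $dA^R$ charges only $\{m=\bar{r}\}=\{n=\underline{r}\}$ and $dA^L$ charges only $\{m=\bar{l}\}=\{n=\underline{l}\}$. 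I would also record that $\widetilde{Y}$ solves the linear $G$-BSDE with frozen generators $f(\cdot,Y,Z),g(\cdot,Y,Z)$, so that $\widetilde{Y}_t=\hE_t[\widetilde{Y}_u+\mathbb{I}_t^u(f,g)]$ for $u\ge t$ and, taking $\hE$, $\mu_t:=\hE[\widetilde{Y}_t]=\hE[\widetilde{Y}_u+\mathbb{I}_t^u(f,g)]$.

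For the upper estimate I would exhibit a single maximizing time. Since $m$ is a deterministic function, $q^\ast:=\inf\{u\in[t,T]:m_u=\bar{r}_u\}\wedge T$ is a deterministic element of $[t,T]$, and by Skorokhod minimality $A^R$ is flat on $[t,q^\ast)$, whence $A_u\le A_t$ there. I then claim $\bar{R}_t(s,q^\ast)\ge m_t$ for every $s\in[t,T]$, which yields $m_t\le\inf_s\bar{R}_t(s,q^\ast)\le\sup_q\inf_s\bar{R}_t(s,q)$. To prove the claim I split on the (deterministic) sign of $s-q^\ast$: if $q^\ast\le s$ and $q^\ast<T$, then $\bar{R}'_t(s,q^\ast)$ reduces to $\mathbb{I}_t^{q^\ast}(f,g)-(K_{q^\ast}-K_t)+\bar{r}_{q^\ast}$ with $\bar{r}_{q^\ast}=m_{q^\ast}$; using $K$ nonincreasing to get $\mathbb{I}_t^{q^\ast}(f,g)-(K_{q^\ast}-K_t)\ge\mathbb{I}_t^{q^\ast}(f,g)$ pointwise, monotonicity of $\hE$, and the subadditive bound $\hE[\mathbb{I}_t^{q^\ast}(f,g)]\ge\mu_t-\mu_{q^\ast}$ coming from the frozen-BSDE identity, I obtain $\bar{R}_t(s,q^\ast)\ge\mu_t+(A_T-A_{q^\ast})\ge\mu_t+(A_T-A_t)=m_t$. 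The case $s<q^\ast$ is identical after using $\bar{l}_s\ge m_s$, and the case $q^\ast=T$ is handled the same way via $\xi=\widetilde{Y}_T$ and $A_T-A_t\le 0$.

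The lower estimate $-\hE[-Y_t]\ge\inf_s\sup_q\underline{R}_t(s,q)$ is proved symmetrically, taking the minimizing time $s^\ast:=\inf\{u\in[t,T]:n_u=\underline{l}_u\}\wedge T$ and replacing throughout the subadditive functional $\hE[\cdot]$ by the superadditive functional $-\hE[-\cdot]$, so that all inequality directions reverse and the barrier value $\underline{l}_{s^\ast}=n_{s^\ast}$ is used in place of $\bar{r}_{q^\ast}=m_{q^\ast}$. The central inequality $-\hE[-Y_t]\le\hE[Y_t]$ is automatic for a sublinear expectation, so the full chain \eqref{inequality} follows.

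Finally, for \eqref{equality}, suppose $\hE[Y_u]=-\hE[-Y_u]$ for all $u$, i.e. $m\equiv n$ and hence $\mu\equiv\nu$ (where $\nu:=-\hE[-\widetilde{Y}]$); then $\widetilde{Y}_u+\hE[-\widetilde{Y}_u]=\widetilde{Y}_u-\hE[\widetilde{Y}_u]$, so the two families of barriers coincide ($\bar{r}=\underline{r}$, $\bar{l}=\underline{l}$), $\bar{R}'_t=\underline{R}'_t$ pointwise, and therefore $\underline{R}_t\le\bar{R}_t$. Combined with the elementary $\sup_q\inf_s\le\inf_s\sup_q$ and the two one-sided bounds already established, this sandwiches all the quantities in \eqref{inequality}; the no-mean-uncertainty hypothesis is precisely what turns the subadditive bound $\hE[\mathbb{I}_t^{q^\ast}(f,g)]\ge\mu_t-\mu_{q^\ast}$ (and its dual) into equalities, so that $q^\ast$ and $s^\ast$ form a saddle point with common value $m_t=\hE[Y_t]$. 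I expect the main obstacle to be exactly this point: because $\hE$ is only subadditive, the classical Dynkin-game martingale identities survive merely as one-sided inequalities, and one must check carefully — using Skorokhod minimality to sign the increments of $A$ before first passage, the monotonicity of the $G$-martingale $K$, and the vanishing $\hE$-mean of $\int Z\,dB$ — that these inequalities point in the right direction and collapse to equalities precisely under no mean uncertainty.
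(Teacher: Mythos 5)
Your reformulation in terms of $m_t=\hE[Y_t]$, $n_t=-\hE[-Y_t]$, the barrier identities $\bar r_t\le m_t\le\bar l_t$, $\underline r_t\le n_t\le\underline l_t$, the flatness of $A^R$ (resp.\ $A^L$) before the first passage time, and the one-sided subadditivity bounds is sound, and your argument for the upper estimate $\hE[Y_t]\le\sup_q\inf_s\bar R_t(s,q)$ is correct (it is essentially the paper's argument with exact first-passage times in place of the $\varepsilon$-approximate ones $q^\varepsilon_t,s^\varepsilon_t$ the paper uses). However, the lower estimate does \emph{not} follow ``symmetrically by reversing all inequalities'', and this is a genuine gap. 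Your upper bound discards the term $-(K_{s\wedge q}-K_t)\ge 0$ by monotonicity of $\hE$, which is legitimate because you want a \emph{lower} bound on $\hE[\bar R'_t]$. For the lower estimate you need an \emph{upper} bound on $-\hE[-\underline R'_t(s^*,q)]$, and discarding the same nonnegative term now points the wrong way; the only available one-sided correction produces an uncontrollable error $\hE[K_t-K_{s\wedge q}]\ge 0$ (the upper expectation of the decrement of a non-increasing $G$-martingale is in general strictly positive). The fix is the one the paper uses: do not separate $K$ from the rest, but substitute the exact identity
\begin{align*}
\mathbb{I}_t^{s\wedge q}(f,g)-(K_{s\wedge q}-K_t)=Y_t-Y_{s\wedge q}+\int_t^{s\wedge q}Z_r\,dB_r-(A_{s\wedge q}-A_t)
\end{align*}
into $\underline R'_t$, bound $\underline r_q\le n_q$ and use $A_q-A_t\ge 0$ on $(t,s^*]$, and only then apply superadditivity of $-\hE[-\cdot]$ so that everything reassembles into $-\hE[-Y_t-\int Z\,dB]=-\hE[-Y_t]$.

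The same issue resurfaces in your treatment of \eqref{equality}. You only establish $\underline R_t\le\bar R_t$, which is automatic and points the wrong way: to close the sandwich one needs $\sup_q\inf_s\bar R\le\inf_s\sup_q\underline R$, for which the paper proves the \emph{equality} $\underline R_t(s,q)=\bar R_t(s,q)$ by showing $\bar R'_t(s,q)$ has no mean uncertainty. This again requires the displayed identity: under the hypothesis, $\bar R'_t(s,q)$ equals $Y_t-Y_{s\wedge q}I_{\{s\wedge q<T\}}+\int_t^{s\wedge q}Z_r\,dB_r$ plus deterministic constants, a sum of mean-certain random variables. Your proposed mechanism --- that the subadditive bound $\hE[\mathbb{I}_t^{q^*}]\ge\mu_t-\mu_{q^*}$ collapses to an equality --- does hold, but it cannot rescue the step where $K$ was discarded by a one-sided inequality, since that inequality does not become an equality under the no-mean-uncertainty hypothesis. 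Both gaps are repaired by the same substitution, after which your first-passage-time formulation goes through.
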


\begin{proof}
Fix $t\in[0,T]$. In order to prove \eqref{inequality}, it suffices to show that for any $\varepsilon>0$, there exist $s^\varepsilon_t,q^\varepsilon_t\in[t,T]$, such that for all $s,q\in[t,T]$,
\begin{equation}\label{equa3.2}
-\varepsilon+\underline{R}_t(s^\varepsilon_t,q)\leq-\hE[-Y_t]\leq  \hE[Y_t]\leq \bar{R}_t(s,q^\varepsilon_t)+\varepsilon.
\end{equation}
We only prove the first inequality since the last one can be proved similarly. By \eqref{ywidetildey} and the definition of $\underline{r}^{\widetilde{Y}},\underline{l}^{{\widetilde{Y}}}$, we have
\begin{align*}
&\hE[R(v,\widetilde{Y}_v+\hE[-\widetilde{Y}_v]+\underline{r}_v)]=0\leq \hE[R(v,Y_v)]=\hE[R(v,\widetilde{Y}_v+\hE[-\widetilde{Y}_v]-\hE[-Y_v])], \\
&\hE[L(v,\widetilde{Y}_v+\hE[-\widetilde{Y}_v]+\underline{l}_v)]=0\geq \hE[L(v,Y_v)]=\hE[L(v,\widetilde{Y}_v+\hE[-\widetilde{Y}_v]-\hE[-Y_v])],
\end{align*}
which implies that
\begin{align}\label{barrbarl}
\underline{r}_v\leq -\hE[-Y_v]\leq \underline{l}_v, \ v\in[0,T].
\end{align}
 Set
\begin{align}\label{410}
s^\varepsilon_t=\inf\{s>t:-\hE[-Y_s]\geq \underline{l}_s-\varepsilon\}\wedge T.
\end{align}
We claim that the first inequality in \eqref{equa3.2} holds for this $s^\varepsilon_t$. Indeed, since $-\hE[-Y_s]<\underline{l}_s-\varepsilon$ on $s\in(t,s^\varepsilon_t)$, the strict monotonicity of $\underline{l}^{\widetilde{Y}}$ implies that
\begin{align*}
\hE[L(s,Y_s)]=\hE[L(s,\widetilde{Y}_s+\hE[-\widetilde{Y}_s]-\hE[-Y_s])]<\hE[L(s,\widetilde{Y}_s+\hE[-\widetilde{Y}_s]+\underline{l}_s)]=0.
\end{align*}
Due to the fact that $\int_0^T \hE[L(s,Y_s)]dA^L_s=0$, we have $A^L_q-A^L_t=0$ for $q\in(t,s^\varepsilon_t)$ and thus $A_q-A_t=A^R_q-A^R_t\geq 0$ for $q\in(t,s^\varepsilon_t)$. By the continuity of $A$,  we deduce that
\begin{align}\label{equa3.2'}
A_q-A_t\geq 0, \ q\in(t,s^\varepsilon_t].
\end{align}
 It is easy to check that for $q\in(t,s^\varepsilon_t]$
\begin{align*}
\underline{R}_t(s^\varepsilon_t,q)=&-\hE[-\mathbb{I}_t^{q}(f,g)-\xi I_{\{q=T\}}+(K_q-K_t)]+\underline{r}_q I_{\{q<T\}}\\
\leq &-\hE[-\mathbb{I}_t^{q}(f,g)-\xi I_{\{q=T\}}+(K_q-K_t)]-\hE[-Y_q] I_{\{q<T\}}+A_q-A_t\\
\leq &-\hE[-Y_t-\int_t^q Z_s dB_s]=-\hE[-Y_t],
\end{align*}
where we have used \eqref{barrbarl}, \eqref{equa3.2'} in the first inequality, the sublinearity of $\hE[\cdot]$ and the dynamics of $Y$ in the second inequality. On the other hand, for $q\in(s^\varepsilon_t,T]$,  we have
\begin{align*}
\underline{R}_t(s^\varepsilon_t,q)=&-\hE[-\mathbb{I}_t^{s^\varepsilon_t}(f,g) +(K_{s^\varepsilon_t}-K_t)]+\bar{l}_{s^\varepsilon_t}\\
\leq &-\hE[-\mathbb{I}_t^{s^\varepsilon_t} (f,g)+(K_{s^\varepsilon_t}-K_t)]-\hE[-Y_{s^\varepsilon_t} ] +\varepsilon+A_{s^\varepsilon_t} -A_t\\
\leq &-\hE[-Y_t-\int_t^{s^\varepsilon_t}  Z_s dB_s]+\varepsilon=-\hE[-Y_t]+\varepsilon,
\end{align*}
where we have used \eqref{equa3.2'} and definition of $s^\varepsilon_t$ in the first inequality, the sublinearity of $\hE[\cdot]$ and the dynamics of $Y$ in the second inequality. Therefore, all the above analysis yields the first inequality in \eqref{equa3.2}. Set
\begin{align}\label{411}
q^\varepsilon_t=\inf\{s>t: \hE[Y_s]\leq \bar{r}_s+\varepsilon\}\wedge T.
\end{align}
By a similar analysis as above, we could obtain the last inequality in \eqref{equa3.2}.

It remains to show \eqref{equality}. To this end, we only need to prove that for any $s,q\in[t,T]$, $\underline{R}_t(s,q)=\bar{R}_t(s,q)$. Indeed, it follows from \eqref{ywidetildey} that $\hE[\widetilde{Y}_t]=-\hE[-\widetilde{Y}_t]$, $t\in[0,T]$.  By the definition of $\underline{r}_t$, $\underline{l}_t$, $\bar{r}_t$ and $\bar{l_t}$, we have $\bar{r}_t=\underline{r}_t$ and $\bar{l_t}=\underline{l}_t$, $t\in[0,T]$. Consequently, we obtain that $\bar{R}'_t(s,q)=\underline{R}'_t(s,q)$. Note that
\begin{align*}
\mathbb{I}_t^{s\wedge q}(f,g)-(K_{s\wedge q}-K_t)+\xi I_{\{s\wedge q=T\}}=Y_t-Y_{s\wedge q} I_{\{s\wedge q<T\}}+\int_t^{s\wedge q}Z_s dB_s.
\end{align*}
Since $Y_t$, $Y_{s\wedge q}$ and $\int_t^{s\wedge q}Z_s dB_s$ have no mean uncertainty, $\bar{R}'_t(s,q)$ has no mean uncertainty.  The proof is complete.
\end{proof}

Moreover, if the coefficient $f$ is  deterministically linearly dependent on $y$ and the loss functions $L,R$ take the following form
\begin{align*}
    L(t,x)=x-L_t, \ R(t,x)=x-R_t,
\end{align*} 
we could connect the $G$-expectation of the solution with another form of optimization problem like what follows.
\begin{theorem}\label{myw405}
Let $(Y, Z, K,A)$ be the solution to the following mean reflected $G$-BSDE with double mean reflections
\begin{equation}\label{503}
\begin{cases}
Y_t=\xi+\int_t^T (\gamma_s Y_s +f_s) d s-\int_t^T Z_s d B_s-(K_T-K_t)+(A_T-A_t), \\
R_t\leq \hE[Y_t]\leq L_t, \\
A_t=A^R_t-A^L_t, \ A^R,A^L\in I[0,T] \textrm{ and } \int_0^T( \hE[Y_t]-R_t) dA_t^R=\int_0^T( \hE[Y_t]-L_t) dA^L_t=0.
\end{cases}
\end{equation}
where $\{f_t\}_{t\in[0,T]}, \{\gamma_t\}_{t\in[0,T]}$ are bounded deterministic measurable functions and $\{L_t\}_{t\in[0,T]},\{R_t\}_{t\in[0,T]}$ are deterministic continuous functions with $\inf_{t\in[0,T]}(L_t-R_t)>0$. 
Then for all $t \in[0, T]$, we have
\begin{align}\label{503'}
\hat{\mathbb{E}}\left[Y_t\right]=\sup _{q \in[t, T]}\inf _{s \in[t, T]}\hat{\mathbb{E}}\left[y_t^{s\wedge q}\right]=\inf _{s \in[t, T]}\sup _{q\in[t, T]}\hat{\mathbb{E}}\left[y_t^{s\wedge q}\right],
\end{align}
 where $y^{s\wedge q}$ is the first component of the solution to the following $G$-BSDE on the time horizon $[0,s\wedge q]$:
 $$
y_t^{s\wedge q}=\left[\xi I_{\{s\wedge q=T\}}+L_s I_{\{s<q\}}+R_q I_{\{q\leq s<T\}}\right]+\int_t^{s\wedge q}( \gamma_r y_r^{s\wedge q} +f_r)d r-\int_t^{s\wedge q} z_r^{\tau\wedge\sigma} d B_r-(k^{s\wedge q}_{s\wedge q}-k^{s\wedge q}_t),
$$
and the saddle-point $s^*,q^*\in[t, T]$ is given by:
\begin{align}\label{502}
s^*=\inf \left\{s \geq t: \hat{\mathbb{E}}\left[Y_s\right]=L_s\right\} \wedge T,\ q^*=\inf \left\{s \geq t: \hat{\mathbb{E}}\left[Y_s\right]=R_s\right\} \wedge T.
\end{align}
\end{theorem}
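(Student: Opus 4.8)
The plan is to verify directly that the pair $(s^*,q^*)$ in \eqref{502} is a saddle point of the game $\hE[y_t^{s\wedge q}]$, so that the two iterated optima collapse onto the common value $\hE[Y_t]$. First I would reduce to the case $\gamma\equiv0$: exactly as in the proof of Theorem \ref{thm3.8}, the substitution $Y^a_t=e^{a_t}Y_t$ with $a_t=\int_0^t\gamma_s\,ds$ turns \eqref{503} into a doubly mean reflected $G$-BSDE with $\gamma=0$ and (rescaled) data $e^{a_t}R_t,e^{a_t}L_t,e^{a_T}\xi$; since $e^{a_t}>0$ the same scaling applies to each $y^{s\wedge q}$, so both $\hE[Y_t]$ and the game value are multiplied by $e^{a_t}$ and \eqref{503'} is preserved. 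Assuming now $\gamma\equiv0$, I fix a deterministic time $\tau\in[t,T]$ and take $\hE_t$ in the dynamics of $Y$ on $[t,\tau]$; because $A,f$ are deterministic and $K$ is a $G$-martingale, the $dB$- and $dK$-terms drop out, and applying $\hE$ once more I obtain the two basic identities
\begin{align*}
\hE[Y_t]=\hE[Y_\tau]+\int_t^\tau f_s\,ds+(A_\tau-A_t),\qquad \hE[y_t^{s\wedge q}]=\hE[\Theta^{s,q}]+\int_t^{s\wedge q}f_s\,ds,
\end{align*}
where $\Theta^{s,q}$ denotes the bracketed terminal datum of $y^{s\wedge q}$ and $\tau=s\wedge q$ in the second identity. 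Subtracting gives the master relation $\hE[y_t^{s\wedge q}]-\hE[Y_t]=\big(\hE[\Theta^{s,q}]-\hE[Y_{s\wedge q}]\big)-(A_{s\wedge q}-A_t)$.

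The crucial structural input is the one-sided monotonicity of $A$ read off from the two Skorokhod conditions, together with the continuity of $u\mapsto\hE[Y_u]$ and the constraint $R_u\le\hE[Y_u]\le L_u$. On $[t,q^*)$ one has $\hE[Y_u]>R_u$, hence $dA^R=0$ and $A$ is non-increasing there; on $[t,s^*)$ one has $\hE[Y_u]<L_u$, hence $dA^L=0$ and $A$ is non-decreasing there. For Step~A (the value at the saddle), on $[t,s^*\wedge q^*)$ both inequalities are strict, so $A$ is constant and $A_{s^*\wedge q^*}-A_t=0$; moreover, by the definition of $s^*,q^*$ one checks in each configuration $s^*\wedge q^*=T$, $s^*<q^*$, $q^*\le s^*$ that $\hE[Y_{s^*\wedge q^*}]=\hE[\Theta^{s^*,q^*}]$ (using the hitting identities $\hE[Y_{s^*}]=L_{s^*}$ or $\hE[Y_{q^*}]=R_{q^*}$). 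The master relation then yields $\hE[y_t^{s^*\wedge q^*}]=\hE[Y_t]$.

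Steps~B and C (the saddle inequalities) apply the same relation on $[t,s\wedge q^*]$ and $[t,s^*\wedge q]$, respectively. For fixed $s$, on $[t,s\wedge q^*]\subseteq[t,q^*]$ the increment $A_{s\wedge q^*}-A_t\le0$, while $\hE[\Theta^{s,q^*}]-\hE[Y_{s\wedge q^*}]\ge0$ (this is $L_s-\hE[Y_s]\ge0$ when $s<q^*$ and an equality when $s\ge q^*$); hence $\hE[y_t^{s\wedge q^*}]\ge\hE[Y_t]$, so $\inf_s\hE[y_t^{s\wedge q^*}]=\hE[Y_t]$. Symmetrically, for fixed $q$, on $[t,s^*\wedge q]\subseteq[t,s^*]$ one has $A_{s^*\wedge q}-A_t\ge0$ and $\hE[\Theta^{s^*,q}]-\hE[Y_{s^*\wedge q}]\le0$, giving $\hE[y_t^{s^*\wedge q}]\le\hE[Y_t]$ and $\sup_q\hE[y_t^{s^*\wedge q}]=\hE[Y_t]$. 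Combining these with the elementary bound $\sup_q\inf_s\le\inf_s\sup_q$ produces the sandwich $\hE[Y_t]\le\sup_q\inf_s\hE[y_t^{s\wedge q}]\le\inf_s\sup_q\hE[y_t^{s\wedge q}]\le\hE[Y_t]$, which forces equality throughout and establishes \eqref{503'}.

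The main obstacle I anticipate is not the game-theoretic bookkeeping but the correct accounting of signs: one must match each Skorokhod condition to the direction in which $A$ may move and to the active barrier ($A^R$ with the lower barrier $R$, $A^L$ with the upper barrier $L$), and then pair this monotonicity with the right one-sided bound on $\hE[\Theta^{s,q}]-\hE[Y_{s\wedge q}]$ coming from $R_u\le\hE[Y_u]\le L_u$. A secondary technical point is justifying the linear representation, namely that the $dK$-term genuinely vanishes under $\hE_t$; with $\gamma\equiv0$ this is immediate from $K$ being a $G$-martingale, which is precisely why the preliminary exponential reduction is worthwhile. Finally, some care is needed at the boundary and tie configurations ($s^*$ or $q^*$ equal to $T$, or $s=q$) to ensure that the terminal datum $\Theta^{s,q}$ and the hitting identities $\hE[Y_{s^*}]=L_{s^*}$, $\hE[Y_{q^*}]=R_{q^*}$ are invoked consistently.
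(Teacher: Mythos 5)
Your proposal is correct and follows essentially the same route as the paper's proof: the exponential reduction $Y^a_t=e^{a_t}Y_t$ to eliminate $\gamma$, the observation that the Skorokhod conditions force $A$ to be constant on $[t,s^*\wedge q^*)$ and monotone (in the appropriate direction) on $[t,q^*)$ and $[t,s^*)$, the resulting saddle-point inequalities $\hE[y_t^{s\wedge q^*}]\ge\hE[Y_t]\ge\hE[y_t^{s^*\wedge q}]$, and the final sandwich via $\sup\inf\le\inf\sup$. The only cosmetic difference is that you reduce fully to $\gamma\equiv 0$ and package the dynamics into a single ``master relation,'' whereas the paper carries the $a$-superscripted quantities through each estimate.
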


\begin{remark}
    Under the setting described in Theorem \ref{myw405} and recalling the notations in Theorem \ref{theorem3.6}, we have
    \begin{align*}
        \bar{R}_t(s,q)=\hE[\xi I_{\{s\wedge q=T\}}+L_s I_{\{s<q\}}+R_q I_{\{q\leq s<T\}}+\int_t^{s\wedge q}(\gamma_rY_r+f_r)dr-(K_{s\wedge q}-K_t)].
    \end{align*}
    The term $\bar{R}_t(s,q)$ is usually not equal to $\hE[y_t^{s\wedge q}]$. Therefore, the optimization problem in \eqref{503'} differs from the one in \eqref{inequality}.
\end{remark}

\begin{proof}
First, we show that $\hat{\mathbb{E}}[Y_t]=\hat{\mathbb{E}}[y_t^{s^*\wedge q^*}]$. Using the notations in the proof of Theorem \ref{thm3.8}, $(Y^a,Z^a,K^a,A^a)$ solves the doubly mean reflected $G$-BSDE with terminal value $\xi^a$, loss functions $L^a,R^a$ and coefficient $\{e^{a_s}f_s\}_{s\in[0,T]}$. Besides, the constraining processes satisfy
\begin{align*}
    A^{a,L}_t=\int_0^t e^{a_r}dA^L_r, \ A^{a,R}_t=\int_0^t e^{a_r}dA^R_r.
\end{align*}
Recalling the definition of $s^*,q^*$ in \eqref{502}, it is easy to check that $dA^L_r=dA^R_r=0$ for $r\in[t,s^*\wedge q^*]$, which implies that $dA^{a,L}_r=dA^{a,R}_r=0$ and thus
\begin{align*}
    Y^a_t=Y^a_{s^*\wedge q^*}+\int_t^{s^*\wedge q^*} e^{a_r}f_rdr-\int_t^{s^*\wedge q^*} Z^a_rdB_r-(K^a_{s^*\wedge q^*}-K^a_t).
\end{align*}
Consequently, we have
\begin{align*}
& \hat{\mathbb{E}}\left[Y^a_t\right]=\hat{\mathbb{E}}\left[Y^a_{s^*\wedge q^*}+\int_t^{s^*\wedge q^*} e^{a_r}f_rdr\right]\\
=&\hat{\mathbb{E}}\left[{Y}^a_{q^*} I_{\left\{q^* \leq s^*<T\right\}}+{Y}^a_{s^*} I_{\left\{s^*<q^*\right\}}+{\xi}^a I_{\left\{s^* \wedge q^*=T\right\}}+\int_t^{s^* \wedge q^*} e^{a_r} f_r d r\right] \\
=&\hat{\mathbb{E}}\left[e^{a_T}\xi I_{\left\{s^* \wedge q^*=T\right\}}\right]+ e^{a_{q^*}}R_{q^*} I_{\left\{q^* \leq s*<T\right\}}+e^{a_{s^*}} L_{s^*} I_{\left\{s^*<q^*\right\}}+\int_t^{s^* \wedge q^*} e^{a_r} f_r d r=e^{a_t}\hat{\mathbb{E}}\left[y_t^{s^* \wedge q^*}\right],
\end{align*}
which implies that $\hat{\mathbb{E}}[Y_t]=\hat{\mathbb{E}}[y_t^{s^*\wedge q^*}]$.

Now let $s \in[t, T]$. Then it follows from the Skorokhod condition that ${A}^{a,R}_{s \wedge q^*}-A^{a,R}_t=0$. Therefore, we have
\begin{align*}
    Y^a_t=Y^a_{s\wedge q^*}+\int_t^{s\wedge q^*} e^{a_r}f_rdr-\int_t^{s\wedge q^*} Z^a_rdB_r-(K^a_{s\wedge q^*}-K^a_t)-({A}^{a,L}_{s \wedge q^*}-A^{a,L}_t).
\end{align*}
Noting that $A^{a,L}$ is nondecreasing, we obtain that 
\begin{align*}
& \hat{\mathbb{E}}\left[Y^a_t\right]=\hat{\mathbb{E}}\left[Y^a_{s\wedge q^*}+\int_t^{s\wedge q^*} e^{a_r}f_rdr-({A}^{a,L}_{s \wedge q^*}-A^{a,L}_t)\right]\\
\leq &\hat{\mathbb{E}}\left[{Y}^a_{q^*} I_{\left\{q^* \leq s<T\right\}}+{Y}^a_{s} I_{\left\{s<q^*\right\}}+{\xi}^a I_{\left\{s \wedge q^*=T\right\}}+\int_t^{s \wedge q^*} e^{a_r} f_r d r\right] \\
=&\hat{\mathbb{E}}\left[e^{a_T}\xi I_{\left\{s \wedge q^*=T\right\}}\right]+ e^{a_{q^*}}R_{q^*} I_{\left\{q^* \leq <T\right\}}+e^{a_{s}} L_{s} I_{\left\{s<q^*\right\}}+\int_t^{s \wedge q^*} e^{a_r} f_r d r=e^{a_t}\hat{\mathbb{E}}\left[y_t^{s \wedge q^*}\right]. 
\end{align*}
The above analysis indicates that 
\begin{align*}
    \hE[Y_t]\leq \inf_{s\in[t,T]}\hE\left[y_t^{s \wedge q^*}\right]\leq \sup_{q\in[t,T]}\inf_{s\in[t,T]}\hE\left[y_t^{s \wedge q}\right].
\end{align*}
Similarly, we have
\begin{align*}
    \hE[Y_t]\geq \sup_{q\in[t,T]}\hE\left[y_t^{s^* \wedge q}\right]\geq \inf_{s\in[t,T]}\sup_{q\in[t,T]}\hE\left[y_t^{s \wedge q}\right].
\end{align*}
It is obvious that $\sup_{q\in[t,T]}\inf_{s\in[t,T]}\hE\left[y_t^{s \wedge q}\right]\leq \inf_{s\in[t,T]}\sup_{q\in[t,T]}\hE\left[y_t^{s \wedge q}\right]$. Hence the desired result holds.
\end{proof}



\appendix
\section{Basic properties of $G$-BSDEs with Lipschitz coefficients}

In this appendix, we state the well-posedness and some basic estimates of $G$-BSDEs for reader's convenience, more relevant details can be found in \cite{HJPS1}.
\begin{theorem}[\cite{HJPS1}]\label{mywA1}
The $G$-BSDE with terminal condition $\xi\in L_{G}^{\beta}\left(\Omega_{T}\right)$ and generators $f, g$ satisfying Assumption \ref{assfg} for some $\beta>1$  has a unique solution $(Y, Z, K)\in \mathfrak{S}_{G}^{\alpha}(0, T)$ for any $1<\alpha <\beta$ and $ t \longmapsto Y_{t}$ is continuous. Moreover, there exists a constant $\widetilde{C}(\alpha, T, G, \kappa)>0$ such that
\begin{gather}
\label{AA1} \left|{Y}_{t}\right|^{\alpha}\leq \widetilde{C}(\alpha, T, G, \kappa)\hat{\mathbb{E}}_{t}\bigg[|{\xi}|^{\alpha}+\int_{t}^{T} |{h}_{s}|^{\alpha}d s\bigg],\\
\label{AA2} \hat{\mathbb{E}}\bigg[\bigg(\int_{0}^{T}\left|{Z}_{s}\right|^{2} d s\bigg)^{\frac{\alpha}{2}}\bigg]+\hat{\mathbb{E}}\left[|K_T|^{\alpha}\right]\leq \widetilde{C}(\alpha, T, G, \kappa)\bigg\{\|{Y}\|_{S_{G}^{\alpha}}^{\alpha}+\Big\|\int_{0}^{T} h_{s} d s\Big\|_{L_{G}^{\alpha}}^{\alpha}\bigg\},
\end{gather}
where $h_s=|f(s, 0,0)|+| g(s, 0,0) |$.
\end{theorem}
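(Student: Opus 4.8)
The plan is to follow the two-stage construction of Hu, Ji, Peng and Song \cite{HJPS1}: first build the solution when the generators are frozen (independent of $(y,z)$) by appealing to the nonlinear martingale representation theorem, then pass to the general Lipschitz case by a contraction argument, while the a priori estimates \eqref{AA1}--\eqref{AA2} and uniqueness are read off from the $G$-It\^o calculus. Throughout, the feature that distinguishes this from the classical theory is the nonincreasing $G$-martingale $K$, whose sign and $L_G^\alpha$-integrability are exploited repeatedly. For the frozen case I would assume $f_s=f(s,0,0),g_s=g(s,0,0)\in M_G^\beta(0,T)$, set
$$Y_t:=\hat{\mathbb{E}}_t\Big[\xi+\int_t^T f_s\,ds+\int_t^T g_s\,d\langle B\rangle_s\Big],$$
and introduce the augmented process $M_t:=Y_t+\int_0^t f_s\,ds+\int_0^t g_s\,d\langle B\rangle_s$, which is a $G$-martingale by the tower property of the conditional $G$-expectation. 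The nonlinear martingale representation theorem (see \cite{HJPS1,Song}) then furnishes $Z\in H_G^\alpha(0,T)$ and a nonincreasing $G$-martingale $K$ with $K_0=0$ such that $M_t=M_0+\int_0^t Z_s\,dB_s+K_t$; unwinding the definition of $M$ yields a solution $(Y,Z,K)\in\mathfrak{S}_G^\alpha(0,T)$.

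\textbf{The general Lipschitz case.} On a short interval $[T-\delta,T]$ I would define a map $\Lambda$ sending a pair $(y,z)$ (in a suitably normed subspace of $H_G^\alpha\times H_G^\alpha$) to $(Y,Z)=\Lambda(y,z)$, where $(Y,Z,K)$ solves the frozen equation with generators $f(s,y_s,z_s),g(s,y_s,z_s)$ produced in the previous step. Using the Lipschitz bound of Assumption \ref{assfg} together with the difference estimate described below, I expect $\Lambda$ to be a contraction once $\delta$ is small enough, so Banach's fixed point theorem gives a unique solution on $[T-\delta,T]$; the global solution on $[0,T]$ then follows by backward induction over a partition of mesh $\delta$, pasting the pieces of $Y,Z,K$ together. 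Continuity of $t\mapsto Y_t$ is inherited from the continuity of the two Lebesgue integrals, of the $G$-stochastic integral $\int_0^\cdot Z_s\,dB_s$, and of the continuous $G$-martingale $K$.

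\textbf{A priori estimates and uniqueness.} For \eqref{AA1} I would linearize, writing $f(s,Y_s,Z_s)=f(s,0,0)+\lambda_s Y_s+\eta_s Z_s$ and $g(s,Y_s,Z_s)=g(s,0,0)+\widetilde{\lambda}_s Y_s+\widetilde{\eta}_s Z_s$ with coefficients bounded by $\kappa$, so that $Y$ solves a linear $G$-BSDE. Absorbing the $Z$-linear drift terms by a Girsanov transformation valid under each $\P\in\mathcal{P}$ and the $Y$-linear terms by an exponential weight, then taking $\hat{\mathbb{E}}_t$ and using that $\int_t^\cdot Z_s\,dB_s$ is a symmetric $G$-martingale while the nonincreasing $G$-martingale $K$ contributes with a definite sign, I obtain $|Y_t|^\alpha\le\widetilde{C}(\alpha,T,G,\kappa)\hat{\mathbb{E}}_t[|\xi|^\alpha+\int_t^T|h_s|^\alpha\,ds]$ via the representation $\hat{\mathbb{E}}=\sup_{\P\in\mathcal{P}}\E^{\P}$ of Theorem \ref{the1.1}. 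For \eqref{AA2} I would apply the $G$-It\^o formula to $|Y_t|^2$, control the martingale term by the Burkholder--Davis--Gundy inequality of Proposition \ref{BDG} and Young's inequality, and then recover $\|K_T\|_{L_G^\alpha}$ directly from the equation once $\|Y\|_{S_G^\alpha}$ and $\|Z\|_{H_G^\alpha}$ are bounded. Uniqueness follows from the analogous estimate for the difference of two solutions: applying $G$-It\^o to a power of $Y^1-Y^2$ and bounding the contributions of the two decreasing martingales $K^1,K^2$ through their $L_G^\alpha$-bounds forces $Y^1\equiv Y^2$, whence $Z^1\equiv Z^2$ and $K^1\equiv K^2$.

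\textbf{Main obstacle.} The crux is the nonincreasing $G$-martingale $K$: its existence rests on the nonlinear martingale representation theorem rather than on a classical Brownian representation, and, because $\hat{\mathbb{E}}$ is merely sublinear, the difference $K^1-K^2$ of two such martingales is no longer monotone. Consequently the difference estimate underlying uniqueness and the contraction cannot be run under a single measure; it must be carried out through the $L_G^\alpha$-control of each $K^i$ together with the representation $\hat{\mathbb{E}}=\sup_{\P\in\mathcal{P}}\E^{\P}$, which is the technically delicate point of the whole argument.
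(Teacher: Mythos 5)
You should note first that the paper itself offers no proof of Theorem \ref{mywA1}: it is quoted verbatim from \cite{HJPS1}, and the appendix explicitly defers all details there. So your proposal has to be measured against the original argument of Hu--Ji--Peng--Song. Your Step 1 (frozen generators) is correct: when $f,g$ do not depend on $(y,z)$, defining $Y_t=\hat{\mathbb{E}}_t[\xi+\int_t^T f_s\,ds+\int_t^T g_s\,d\langle B\rangle_s]$ and applying the $G$-martingale decomposition theorem of \cite{Song} does produce $(Z,K)$ with the stated integrability. The genuine gap is your central step: the Banach fixed point $(y,z)\mapsto(Y,Z)$ on a small interval is exactly the step that is known to fail in the $G$-framework. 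The only available stability estimate for the $Z$-part of the difference of two solutions is \eqref{AA4}, and it is not of contraction type: it bounds $\|\hat{Z}\|_{H_G^\alpha}^\alpha$ by $\|\hat{Y}\|_{S_G^\alpha}^\alpha$ plus the square-root term $\|\hat{Y}\|_{S_G^\alpha}^{\alpha/2}$ multiplied by \emph{global} norms of both solutions, with no factor that becomes small as the interval shrinks. The structural reason is the one you half-identify in your final paragraph: $K^1-K^2$ is neither monotone nor a $G$-martingale, so no linear conditional estimate of $\hat{Z}$ against the frozen-data difference can be extracted; your assertion that the $L_G^\alpha$-control of each $K^i$ plus the representation $\hat{\mathbb{E}}=\sup_{\P\in\mathcal{P}}\E^{\P}$ rescues the contraction is unsubstantiated, since those ingredients yield precisely \eqref{AA4} and nothing stronger. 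This obstruction is stated repeatedly in the very paper you are working in: it is why the proofs of Theorem \ref{thm3.10} and Theorem \ref{myw310} run the fixed point, respectively the Picard iteration, \emph{only in the $Y$ component} (see also Remark 3.15 of \cite{LiuW}, cited there for exactly this point).

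The actual route in \cite{HJPS1} is different in kind. For regular Markovian data ($\xi=\varphi(B_{t_1},\ldots,B_{t_n})$ and generators given by $C_{b,Lip}$ functions of $t$, finitely many increments of $B$, $y$ and $z$), the solution is constructed explicitly through classical solutions of the fully nonlinear PDE $\partial_t u+G(\partial_{xx}^2u+2g)+f=0$ on each subinterval of a partition, with $Y_t=u(t,B_t)$, $Z_t=\partial_x u(t,B_t)$, and $K_t=\int_0^t\big(\tfrac12\partial_{xx}^2u+g\big)\,d\langle B\rangle_s-\int_0^t G\big(\partial_{xx}^2u+2g\big)\,ds$, which the $G$-It\^o formula exhibits as a nonincreasing $G$-martingale; general $(\xi,f,g)$ are then approximated by such data, and the stability estimates \eqref{AA3}--\eqref{AA4} (which tolerate the square-root loss, since one only needs a Cauchy sequence in $\mathfrak{S}_G^\alpha(0,T)$, not a contraction) give convergence. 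Uniqueness is an immediate consequence of \eqref{AA3}, not of an It\^o computation on $|Y^1-Y^2|$. Two smaller points: your per-measure Girsanov argument for \eqref{AA1} is shaky, because the transformed measure need not belong to the representing set $\mathcal{P}$ and the estimate \eqref{AA1} is a quasi-surely valid conditional bound; in \cite{HJPS1} it is proved by applying the $G$-It\^o formula to a smoothed version of $|Y_t|^\alpha$ (needed since $\alpha$ may be below $2$), absorbing the $z$-linear terms via Young's inequality against the $|Y|^{\alpha-2}|Z|^2\,d\langle B\rangle$ term and using the sign of $dK$. Repairing your proposal would mean replacing the contraction step wholesale by the PDE-plus-approximation scheme, or by an iteration in $Y$ alone in the spirit of Section 3 of this paper.
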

\begin{remark}\label{mywA2}
Actually, if $\xi\in L_{G}^{\beta}\left(\Omega_{T}\right)$ and $f, g$ satisfy Assumption \ref{assfg} for some $\beta>1$, we could get $Y\in M_{G}^{\beta}\left(0, T \right)$, which was verified in Lemma 3.2 of \cite{L}.
\end{remark}
\begin{theorem}[\cite{HJPS1}]\label{mywA3}
Let $\xi^{l} \in L_{G}^{\beta}\left(\Omega_{T}\right)$, $l=1,2$, and $f^{l}, g^{l}$ satisfy Assumption \ref{assfg} for some $\beta>1$. Assume that $\left(Y^{l}, Z^{l}, K^{l}\right) \in \mathfrak{S}_{G}^{\alpha}(0, T)$ for some $1<\alpha <\beta$ is the solution of $G$-BSDE corresponding to the data $(\xi^{l}, f^{l}, g^{l}).$ Set $\hat{Y}_{t}=Y_{t}^{1}-Y_{t}^{2}$, $\hat{Z}_{t}=Z_{t}^{1}-Z_{t}^{2} $. Then there exists a positive constant $\widetilde{C}(\alpha, T, G, \kappa)$ such that
\begin{gather}
\label{AA3}\big|\hat{Y}_{t}\big|^{\alpha}\leq \widetilde{C}(\alpha, T, G, \kappa)\hat{\mathbb{E}}_{t}\bigg[\Big(|\hat{\xi}|+\int_{t}^{T} |\hat{h}_{s}|d s\Big)^{\alpha}\bigg],\\
\label{AA4} \hat{\mathbb{E}}\bigg[\bigg(\int_{0}^{T}\big|\hat{Z}_{s}\big|^{2} d s\bigg)^{\frac{\alpha}{2}}\bigg] \leq \widetilde{C}(\alpha, T, G, \kappa)\Bigg\{\|\hat{Y}\|_{S_{G}^{\alpha}}^{\alpha}+\|\hat{Y}\|_{S_{G}^{\alpha}}^{\frac{\alpha}{2}} \sum_{l=1}^{2}\bigg[\left\|Y^{l}\right\|_{S_{G}^{\alpha}}^{\frac{\alpha}{2}}+\Big\|\int_{0}^{T} h_{s}^{l, 0} d s\Big\|_{L_{G}^{\alpha}}^{\frac{\alpha}{2}}\bigg]\Bigg\},
\end{gather}
where $h_{s}^{l, 0}=|f^{l}(s, 0,0)|+| g^{l}(s, 0,0) |$, $\hat{\xi}=\xi^{1}-\xi^{2}$, and $\hat{h}_{s}=\left|f^{1}\left(s, Y_{s}^{2}, Z_{s}^{2}\right)-f^{2}\left(s, Y_{s}^{2}, Z_{s}^{2}\right)\right|+\left|g^{1}\left(s, Y_{s}^{2}, Z_{s}^{2}\right)-g^{2}\left(s, Y_{s}^{2}, Z_{s}^{2}\right)\right|$. Moreover, according to the proof of Proposition 5.1 in \cite{HJPS1}, we could write $\widetilde{C}(\alpha, T, G, \kappa)$ in \eqref{AA3} in a more accurate form $\widetilde{C}(\alpha, T, G, \kappa)=\widetilde{C}(\alpha,G,\kappa)e^{\widetilde{C}(\alpha,G,\kappa)T}$.
\end{theorem}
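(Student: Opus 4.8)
The plan is to reduce to a driver with no $y$-dependence and then exploit the representation of linear $G$-BSDEs together with the two Skorokhod conditions to build matching upper and lower bounds. Following the proof of Theorem \ref{thm3.8}, I would first put $a_t=\int_0^t\gamma_s\,ds$ and pass to $Y^a_t=e^{a_t}Y_t$ (with $Z^a,K^a,A^a$ accordingly), so that $(Y^a,Z^a,K^a,A^a)$ solves a doubly mean reflected $G$-BSDE with deterministic driver $\{e^{a_s}f_s\}$, transformed barriers $e^{a_t}L_t,e^{a_t}R_t$, and $dA^{a,L}=e^{a}\,dA^L$, $dA^{a,R}=e^{a}\,dA^R$. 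Since $\gamma,f,L,R$ are deterministic and $t\mapsto\hE[Y_t]$ is continuous (by Proposition \ref{the3.7}, $|\hE[Y_t]-\hE[Y_s]|\le\hE[|Y_t-Y_s|]\to0$), the times $s^*,q^*$ in \eqref{502} are in fact deterministic, with $\hE[Y_{s^*}]=L_{s^*}$ if $s^*<T$, $\hE[Y_{q^*}]=R_{q^*}$ if $q^*<T$, while $\hE[Y_s]<L_s$ on $[t,s^*)$ and $\hE[Y_s]>R_s$ on $[t,q^*)$. The recurring tool will be the elementary fact that a $G$-BSDE whose generator depends on neither $y$ nor $z$ has first component $\hE_v\big[\text{terminal}+\int_v^{\cdot}(\text{driver})\big]$, applied on the intervals where one or both constraints are switched off.

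Second, I would establish the saddle-point identity $\hE[Y_t]=\hE[y_t^{s^*\wedge q^*}]$. On $[t,s^*\wedge q^*]$ the Skorokhod conditions force $dA^{a,L}=dA^{a,R}=0$, because $\hE[Y]$ stays strictly between the barriers there; hence $A^a$ is constant and $Y^a$ solves a genuine linear $G$-BSDE. The representation, followed by taking the unconditional $G$-expectation and the tower property, gives $\hE[Y^a_t]=\hE\big[Y^a_{s^*\wedge q^*}+\int_t^{s^*\wedge q^*}e^{a_r}f_r\,dr\big]$. As $a_\cdot$ and the integral are deterministic and $\hE[Y^a_{s^*\wedge q^*}]$ equals $e^{a_T}\hE[\xi]$, $e^{a_{q^*}}R_{q^*}$, or $e^{a_{s^*}}L_{s^*}$ according to whether $s^*\wedge q^*=T$, $q^*\le s^*<T$, or $s^*<q^*$, this matches exactly $e^{a_t}\hE[y_t^{s^*\wedge q^*}]$; dividing by $e^{a_t}$ yields the claim.

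Third, I would derive the one-sided estimates. Fixing $s\in[t,T]$ and working on $[t,s\wedge q^*]$, the strict inequality $\hE[Y]>R$ there makes $A^{a,R}$ inactive, so $A^a_{s\wedge q^*}-A^a_t=-(A^{a,L}_{s\wedge q^*}-A^{a,L}_t)\le0$; absorbing this deterministic nonincreasing drift into the driver and applying the representation gives $\hE[Y^a_t]=\hE\big[Y^a_{s\wedge q^*}+\int_t^{s\wedge q^*}e^{a_r}f_r\,dr-(A^{a,L}_{s\wedge q^*}-A^{a,L}_t)\big]\le\hE\big[Y^a_{s\wedge q^*}+\int_t^{s\wedge q^*}e^{a_r}f_r\,dr\big]$, and bounding $\hE[Y^a_{s\wedge q^*}]$ above by the corresponding transformed barrier (using $\hE[Y_s]\le L_s$ on $\{s<q^*\}$) produces $\hE[Y_t]\le\hE[y_t^{s\wedge q^*}]$ for every $s$, whence $\hE[Y_t]\le\sup_q\inf_s\hE[y_t^{s\wedge q}]$. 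Symmetrically, fixing $q$ and working on $[t,s^*\wedge q]$ where $A^{a,L}$ is inactive so $A^a_{s^*\wedge q}-A^a_t\ge0$, I obtain $\hE[Y_t]\ge\hE[y_t^{s^*\wedge q}]$ for every $q$, hence $\hE[Y_t]\ge\inf_s\sup_q\hE[y_t^{s\wedge q}]$. Combining the two bounds with the trivial minimax inequality $\sup_q\inf_s\le\inf_s\sup_q$ squeezes every quantity to $\hE[Y_t]$, which is \eqref{503'}.

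The main obstacle I anticipate is the careful bookkeeping of the two Skorokhod conditions: one must argue cleanly that strict separation of $\hE[Y]$ from a given barrier turns off the corresponding component of $A$, and then keep track of which of the three terminal alternatives is selected in each of the upper and lower estimates so that the inequalities point in the right direction. A secondary subtlety is justifying the linear-$G$-BSDE representation in the presence of the deterministic bounded-variation drift $A^{a,L}$ (resp. $A^{a,R}$); since that drift is deterministic it can be absorbed into the driver, so the representation and the tower property apply verbatim, but this absorption should be stated explicitly.
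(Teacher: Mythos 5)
Your proposal does not address the statement at hand. Theorem \ref{mywA3} is an a priori stability estimate for the difference of solutions to two \emph{non-reflected} $G$-BSDEs with general (random, Lipschitz) generators $f^l,g^l$: it asserts the pointwise conditional bound \eqref{AA3} on $\hat{Y}=Y^1-Y^2$ and the $H_G^\alpha$-type bound \eqref{AA4} on $\hat{Z}=Z^1-Z^2$. What you have written is instead a proof sketch for Theorem \ref{myw405}, the minimax representation $\hat{\mathbb{E}}[Y_t]=\sup_q\inf_s\hat{\mathbb{E}}[y_t^{s\wedge q}]$ for the doubly mean reflected equation \eqref{503} with deterministic linear driver: the exponential transform $Y^a_t=e^{a_t}Y_t$, the deterministic times $s^*,q^*$ from \eqref{502}, the activation/deactivation of $A^{a,L},A^{a,R}$ via the Skorokhod conditions, and the squeeze through the trivial minimax inequality all belong to that theorem (and, for what it is worth, they do reproduce the paper's proof of Theorem \ref{myw405} quite faithfully). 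None of this machinery can yield \eqref{AA3}--\eqref{AA4}: the statement of Theorem \ref{mywA3} contains no barriers, no constraining process $A$, and no Skorokhod condition, and its generators have genuinely random Lipschitz dependence on $(y,z)$, which the transform $e^{a_t}$ (designed only to remove a \emph{deterministic} linear $y$-term) cannot eliminate.

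Note also that the paper itself gives no internal proof of Theorem \ref{mywA3}; it is quoted from \cite{HJPS1} (Proposition 5.1 and the related estimates there). A genuine proof runs along entirely different lines: one writes the equation satisfied by $\hat{Y}$, decomposes the generator difference by Lipschitz continuity into terms bounded by $\kappa(|\hat{Y}|+|\hat{Z}|)$ plus the data term $\hat{h}_s$, applies It\^{o}'s formula to $(|\hat{Y}_t|^2+\varepsilon)^{\alpha/2}$ (the regularization being needed since $1<\alpha<2$ is allowed), exploits the fact that $K^1,K^2$ are nonincreasing $G$-martingales so their contributions can be controlled in conditional expectation, and closes the estimate with the Burkholder--Davis--Gundy inequality (Proposition \ref{BDG}) and a Gronwall argument, which is also the source of the refined constant $\widetilde{C}(\alpha,G,\kappa)e^{\widetilde{C}(\alpha,G,\kappa)T}$ in \eqref{AA3}. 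The $Z$-estimate \eqref{AA4} then follows by applying It\^{o}'s formula to $|\hat{Y}|^2$, isolating $\int_0^T|\hat{Z}_s|^2\,d\langle B\rangle_s$, and estimating the remaining terms with BDG and H\"older, which is precisely how the asymmetric right-hand side involving $\|\hat{Y}\|_{S_G^\alpha}^{\alpha/2}$ and the norms of $Y^l$ and $\int_0^T h_s^{l,0}\,ds$ arises. You would need to restart from this outline; nothing in your current argument can be salvaged for this statement.
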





\section*{Acknowledgments}
Li's work was supported by the National Natural Science Foundation of China (No. 12301178), the Natural Science Foundation of Shandong Province for Excellent Young Scientists Fund Program (Overseas) (No. 2023HWYQ-049), the Natural Science Foundation of Shandong Province (No. ZR2023ZD35) and the Qilu Young Scholars Program of Shandong University.

\end{document}